\numberwithin{equation}{section}
\theoremstyle{plain}
\newtheorem{theorem}{Theorem}
\numberwithin{theorem}{section}
\newtheorem{lemma}[theorem]{Lemma}                              
\newtheorem{proposition}[theorem]{Proposition}
\newtheorem{corollary}[theorem]{Corollary}
\theoremstyle{definition}
\newtheorem{definition}[theorem]{Definition}
\newtheorem{notation}[theorem]{Notation}
\newtheorem{remark}[theorem]{Remark}
\newtheorem{assumption}[theorem]{Assumption}
\def \s {{\sigma}}
\def \a {{\alpha}}
\def \b {{\beta}}
\def \R {\mathbb{R}}
\def \p {\partial}
\def \t {\tau}
\newcommand\tX{\tilde{X}}
\newcommand\tP{\tilde{P}}
\newcommand\tp{\tilde{p}}
\newcommand\ttau{\tilde{\tau}}
\newcommand\Rd{\mathbb{R}^d}
\newcommand\Rdd{\mathbb{R}\times \mathbb{R}^d}
\newcommand\Nzero{\mathbb{N}_0}
\newcommand\Ac{\mathscr{A}}
\newcommand\tAc{\tilde{\mathscr{A}}}
\newcommand\Acc{{\mathscr{A}}}
\newcommand\ac{a}
\newcommand\ta{\tilde{a}}
\newcommand\Lc{\mathscr{L}}
\newcommand\Kc{\mathscr{K}}
\def \caratt {{\mathds{1}}}
\newcommand\eps{\varepsilon}
\newcommand\e{\varepsilon}
\newcommand\Cv{\mathbf{C}}
\renewcommand\d{\delta}
\newcommand\dd{d}
\def \R  {{\mathbb {R}}}
\def \x {{\xi}}
\def \e {{\varepsilon}}
\def \eps {{\varepsilon}}
\def \t {{\tau}}
\def \m {{\mu}}
\def \z {{\zeta}}
\def \p {{\partial}}
\def \a {{\alpha}}
\def \O {{\Omega}}
\newcommand\tO{\tilde{\Omega}}
\def \d {{\delta}}
\def \TT {\mathbf{T}}
\def \caratt {{\mathds{1}}}
\def \a {{\alpha}}
\def \b {{\beta}}
\def \d {{\delta}}
\def \G {\Ga}
\def \tG {\tilde{\Ga}}
\def \tGi {G}
\def \Ga {{\Gamma}}
\def \It\^o {It\^o }
\def \s {{\sigma}}
\def \R {{\mathbb {R}}}
\def \N {{\mathbb {N}}}
\def \x {{\xi}}
\def \e {{\varepsilon}}
\def \eps {{\varepsilon}}
\def \t {{\tau}}
\def \t {{\tau}}
\def \m {{\mu}}
\def \O {{\Omega}}
\def \phi {{\varphi}}
\def \tilde {\widetilde}
\def\l {\lambda}
\def \F {\mathcal{F}}
\def \tF {\tilde{\mathcal{F}}}
\def \Ã  {{\`a }}
\def \è {{\`e }}
\def \ò {{\`o }}
\def \ù {{\`u }}
\def \caratt {{\mathds{1}}}
\newcommand\locdiff{\Ac_t}
\begin{document}

\title{Local densities for a class of degenerate diffusions}

\author{
Alberto Lanconelli
\thanks{Dipartimento di Matematica, Universit\`a di Bari Aldo Moro, Bari, Italy.
\textbf{e-mail}: alberto.lanconelli@uniba.it. Work supported by the Italian {\it INDAM-GNAMPA}}
\and Stefano Pagliarani
\thanks{DIES, Universit\`a di Udine, Udine, Italy.
\textbf{e-mail}: stefano.pagliarani@uniud.it.} \and Andrea Pascucci
\thanks{Dipartimento di Matematica, Universit\`a di Bologna, Bologna, Italy.
\textbf{e-mail}: andrea.pascucci@unibo.it}
}

\date{This version: \today}

\maketitle

\begin{abstract}
We study a class of $\Rd$-valued continuous strong Markov processes that are generated, only
locally, by an ultra-parabolic operator with coefficients that  are regular w.r.t. the intrinsic
geometry induced by the operator itself and not w.r.t. the Euclidean one. The first main result is
a local  It\^o formula for functions that are not twice-differentiable in the classical sense, but only
intrinsically w.r.t. to a set of vector fields, related to the generator, satisfying the
H\"ormander condition. The second main contribution, which builds upon the first one, is an
existence and regularity result for the local transition density. 
\end{abstract}


\noindent \textbf{Keywords}: H\"ormander condition, intrinsic geometry, intrinsic H\"older spaces, Kolmogorov equations, local densities, strong Feller property.

%
%

\section{Introduction}
\label{sec:intro}
%
%



We study an $\R^d$-valued continuous strong Markov process $X$ that is generated, in a way that
will be specified later, \emph{only locally} on a domain $D\subseteq \R^d$ by the degenerate
operator
\begin{align}\label{ae75bis}
 \Acc_{t}:= \frac{1}{2}\sum_{i,j=1}^{p_0}\ac_{ij}(t,x)\p_{x_{i}x_{j}}+\sum_{i=1}^{p_0}\ac_{i}(t,x)\p_{x_{i}} + \langle B x, \nabla_x \rangle  \qquad t\in [0,T_{0}[,\ x\in D.
\end{align}
Above, $p_0\leq d$ and $B$ is a $(d\times d)$-matrix with constant real entries. In this paper,
the focus is mainly on the case $p_0<d$, which implies that no ellipticity condition on $\Acc_{t}$
is satisfied (i.e. the second order part is fully degenerate). The main structural assumption on
the local-generator $\Acc_{t}$ is the following
\begin{assumption}\label{ass:hypo}
The matrix $B$ is such that the Kolmogorov operator
\begin{equation}\label{eq:kolm_const}
 \Kc:= \frac{1}{2}\sum_{i=1}^{p_0}\p^2_{x_{i}} + \underbrace{\langle B x, \nabla_x \rangle + \partial_t}_{=:Y},\qquad   x\in \mathbb{R}^d,
\end{equation}
is hypoelliptic on $\mathbb{R}\times\mathbb{R}^d$. Equivalently, the vector fields
$\partial_{x_1},\dots,\partial_{x_{p_{0}}}$ and $Y$ satisfy the H\"ormander condition
\begin{equation}
\text{rank } \text{Lie}(\partial_{x_1},\dots,\partial_{x_{p_{0}}},Y)  = d.
\end{equation}
\end{assumption}
Assumption \ref{ass:hypo} is the only hypothesis required for the first main result of the paper, namely the \emph{instrinsic} It\^o's formula. The second main result, about the local density of $X$, is stated under the following additional
\begin{assumption}\label{assum1and}
There exist $N\in\N_0$, $\alpha\in (0,1]$ and $M>0$ 
 such that:
\begin{itemize}
\item[i)] $a_{ij}  ,a_{i}  \in C^{N,\alpha}_B(]0,T_0[\times D)$ for any $i,j=1,\dots,p_0$, with all the (Lie) derivatives bounded by $M$;
  \item[ii)] the following coercivity 
  condition holds on 
  $D$: 
\begin{align}\label{cond:ell-loc}
 M^{-1} |\x|^2 \leq \sum_{i,j=1}^{p_0}
 a_{ij}(t,x)\x_{i}\x_{j}\leq M  |\x|^2 ,\qquad t\in\left]0,T_{0}\right[,\ x\in 
 D,\
 \x\in\mathbb{R}^{p_0}.
\end{align}
\end{itemize}
\end{assumption}
The spaces $C^{N,\alpha}_B(]0,T_0[\times D)$ appearing above are 
the intrinsic H\"older spaces 
induced by the vector fields $\partial_{x_1},\dots,\partial_{x_{p_{0}}}$ and $Y$: their definition
is recalled, for the reader's convenience, in Section \ref{sec:kolmogorov_operators}.

A relevant prototype example that fits Assumptions \ref{ass:hypo} and \ref{assum1and} is the stochastic process $X=(X^1,X^2)$ defined by
\begin{equation}\label{eq:Asian_BS}
\begin{cases}
  \dd X^1_t =X^1_t \dd W_t, \\
  \dd X^2_t = X^1_t \dd t,
\end{cases}
\end{equation}
which is generated by the operator
\begin{equation}\label{eq:bs_asian_kolm}
\Ac = \frac{x^2_1}{2}\partial_{x_1x_1}+x_1\partial_{x_2},\quad\quad (x_1,x_2)\in\mathbb{R}_{>0}^2.
\end{equation}
This operator arises in mathematical finance and is related to the valuation of a class of
path-dependent financial derivatives known as arithmetic Asian options. The process $X^1_t$ is a
geometric Brownian motion and represents the price of a risky asset, whereas $X^2_t$ represents
its average. The operator fulfills Assumption \ref{ass:hypo} in that the commutator
 $$ [\partial_{x_1} , Y] := \partial_{x_1}Y - Y\partial_{x_1}=\partial_{x_2}, $$
and also satisfies Assumption \ref{assum1and} for any $D=\,]a,\infty[$ with $a>0$. Although more
sophisticated models, with more flexible  dynamics (local-stochastic volatility) for the price of
the underlying asset, were proposed to price Asian options, the prototype process
\eqref{eq:Asian_BS} is complex enough to exhibit some interesting mathematical properties. In
fact, the problem of analytically characterizing its joint transition density is still partially open, {and sharp upper/lower bounds were established only recently in \cite{CibelliPolidoroRossi}}. It is easy to recognize in \eqref{eq:bs_asian_kolm} the {double degeneracy} 
of the generator $\Acc$ that our framework allows for: 
on the one hand, the second order part of $\Acc$ is fully degenerate in that 
the partial derivative $\p_{x_{2}x_{2}}$ is missing; 
on the other hand, the coefficient $x_1^2$ of the second order derivative $\partial_{x_1 x_1}$
also degenerates near zero.
More generally (see Proposition \ref{la6} below for the precise statement), the class of stochastic processes that we consider includes locally-integrated diffusions of the form
\begin{align}\label{eq:sde_bis}
 d{X}_{t}=\m(t,{X}_{t})dt+\s(t,{X}_{t})dW_{t},
 \end{align}
with $\mu:[0,T_0[\times\R^d\longrightarrow\R^{d}$ and
$\sigma:[0,T_0[\times\R^d\longrightarrow\R^{d\times n}$ such that, for any $(t,x)\in [0,T_0[\times
D$, 
\begin{align}\label{eq:ste105}
 \mu(t,x)&=\big(a_{1}(t,x), \cdots, a_{p_0}(t,x), 0, \cdots, 0\big) + {B x} ,
 \\
 \s\,\s^\top& =\begin{pmatrix}
    A & 0_{p_0\times (d-p_0)} \\ \label{eq:ste103}
    0_{(d-p_0)\times p_0} & 0_{(d-p_0)\times (d-p_0)} \
  \end{pmatrix}, \qquad A=\left(a_{ij}(t,x)\right)_{i,j=1,\cdots,p_0},
\end{align}
and with $B$ and $a_{ij},a_{i}$ satisfying Assumptions \ref{ass:hypo} and \ref{assum1and}
respectively.

We emphasize that no assumption is required on the generator of $X$ outside the domain $D$,
although the process $X$ ``lives" on $\mathbb{R}^d$, meaning that its trajectories are allowed to
go in and out $D$.

\subsection{Main results and comparison with the literature}
Here we report and discuss the main results of the paper comparing them to the related
literature. 
Granted that precise definitions will be
given in the sequel, namely in Sections \ref{sec:kolmogorov_operators} and \ref{sec:model}, we
will provide here a heuristic 
explanation of all the objects that appear in the
statements below. 
%

The first main result of this paper 
is a local {intrinsic} It\^o formula for $X$. In the following statement, $P_{t,x}$ represents the
probability under which the process $X$ starts from the point $x$ at time $t$ with probability one
and $\F^{t}$ is a filtration to which $(X_T)_{T\geq t}$ is adapted. Moreover, we denote by $\Lc$
the differential operator differential operator
\begin{equation}\label{eq:operator_L}
 \Lc := 
 \frac{1}{2}\sum_{i,j=1}^{p_0}\ac_{ij}(t,x)\p_{x_{i}x_{j}}+\sum_{i=1}^{p_0}\ac_{i}(t,x)\p_{x_{i}} + Y
\end{equation}
where $Y$ is the vector field as defined in \eqref{eq:kolm_const}. 
\begin{theorem}[\bf Intrinsic It\^o formula]\label{cor:ste101}
Let $X$ be a 
\emph{local diffusion on $\R^d$ generated by $\Ac_t$ on $D$} (in the sense of Definition
\ref{def:local_diffusion}) and let Assumption \ref{ass:hypo} be in force. Then, for any fixed
$(t,x)\in\, ]0,T_{0}[\times \R^d$, $\a\in\, ]0,1]$ and
$f\in C^{2,\a}_{B}$ with compact support in $]0,T_{0}[\times D$, 
we have
\begin{equation}\label{ae41}
  f(T,X_{T}) = {f(t,X_{t})}+\int_{t}^{T} 
  \Lc f(u,X_{u})du + M^{t}_{T},\qquad
  {t\leq T<T_0},
\end{equation}
where 
$M^{t}$ is a zero-mean
$\F^{t}$-martingale under $P_{t,x}$, and 
\begin{equation}\label{eq:quad_var}
E_{t,x}\big[|M^t_T|^2\big]=E_{t,x}\bigg[ \int_t^T  \sum_{i,j=1}^{p_0}    a_{ij}(s,X_s)\partial_{x_i} f(s,X_s)\partial_{x_j} f(s,X_s) \dd s\bigg] .
\end{equation}
\end{theorem}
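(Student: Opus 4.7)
My plan is to derive the intrinsic It\^o formula from the classical one by approximating $f$ with smooth functions that converge in the intrinsic norms associated with $C^{2,\alpha}_B$. Since $\text{supp}(f)$ is a compact subset of $]0,T_0[\,\times D$, fix compacts $K\Subset K' \subset D$ and $0<t_1<t_2<T_0$ with $\text{supp}(f)\subset [t_1,t_2]\times K$. The dynamics relevant to $f$ take place entirely in the region where $X$ is generated by $\Ac_t$, and a standard stopping-time argument at the first exit of $(u,X_u)$ from $[t_1,t_2]\times K'$ reduces the proof to a situation in which the local diffusion property can be invoked directly and the terms entering \eqref{ae41} are bounded uniformly in $\omega$.

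The key step is to construct a family $(f_\eps)_{\eps>0}\subset C^\infty_c(]0,T_0[\times D)$ such that, as $\eps\downarrow 0$, we have $f_\eps\to f$, $\p_{x_i} f_\eps \to \p_{x_i} f$ and $\p_{x_ix_j} f_\eps \to \p_{x_ix_j} f$ for $i,j\leq p_0$, and $Y f_\eps \to Y f$, all uniformly on $[t_1,t_2]\times\overline{K'}$. A naive Euclidean mollification is inadequate, because $f$ need not admit classical derivatives $\p_t f$ or $\p_{x_k} f$ for $k>p_0$ individually; rather, the mollification must be performed by convolution with respect to the homogeneous Lie-group structure naturally attached to the vector fields $\p_{x_1},\dots,\p_{x_{p_0}},Y$ under Assumption \ref{ass:hypo}, so that the intrinsic $C^{2,\alpha}_B$ norm controls the regularization error in all the directions that appear in $\Lc$.

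Given such approximants, the local diffusion property (Definition \ref{def:local_diffusion}) applied to each classical $f_\eps$ yields that
\begin{equation*}
 M^{\eps,t}_T := f_\eps(T,X_T) - f_\eps(t,X_t) - \int_t^T \Lc f_\eps(u,X_u)\,du
\end{equation*}
is a zero-mean $\F^t$-martingale under $P_{t,x}$, whose quadratic variation is $\int_t^T \sum_{i,j\leq p_0} a_{ij}(u,X_u)\,\p_{x_i} f_\eps\,\p_{x_j} f_\eps(u,X_u)\,du$ (here the cancellation between the classical $\p_t f_\eps$ and the non-martingale drift $\langle Bx, \nabla f_\eps\rangle$ in the transverse directions is what produces $Y f_\eps$, with no Brownian contribution from $k>p_0$). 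Passing to the limit $\eps\downarrow 0$: uniform convergence on $[t_1,t_2]\times\overline{K'}$ combined with dominated convergence handles the boundary terms $f_\eps(T,X_T), f_\eps(t,X_t)$ and the time integral, while the quadratic-variation formula together with the boundedness of $a_{ij}$ yields an $L^2$-Cauchy estimate for the $(M^{\eps,t}_T)_\eps$. The $L^2$-limit $M^t_T$ is the sought martingale, and passing to the limit inside It\^o's isometry produces \eqref{eq:quad_var}.

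The main obstacle is the second step: building smooth approximants whose intrinsic H\"older norms are controlled by those of $f$, without ever appealing to regularity in the ``missing'' directions. This is exactly where the H\"ormander condition is essential, since it allows one to endow $\mathbb{R}\times\mathbb{R}^d$ with a homogeneous Lie-group structure under which group convolution acts continuously on the scale $C^{N,\alpha}_B$. I would expect the construction and quantitative properties of such mollifiers to be established once and for all in the preliminary Section \ref{sec:kolmogorov_operators}, and then invoked here as a black box.
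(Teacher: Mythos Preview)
Your approach is genuinely different from the paper's, and the key ingredient you rely on is not supplied there.

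The paper never mollifies $f$. Instead, it works with $f\in C^{2,\alpha}_{0,B}$ directly, using the intrinsic Taylor expansion (Corollary~\ref{cor_tay_int}) to show that
\[
\frac{(\TT_{t,T}f(T,\cdot))(x)-f(t,x)}{T-t}\longrightarrow \Lc f(t,x)
\]
uniformly (Proposition~\ref{la5}). The crucial estimate is that the intrinsic Taylor remainder is controlled by $\big(|T-t|^{1/2}+\big|\xi-e^{(T-t)B}x\big|_B\big)^{2+\alpha}$, and Proposition~\ref{prop:ste1} provides exactly the moment bounds on $\big|X_T-e^{(T-t)B}x\big|_B$ needed to make this remainder $o(T-t)$. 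Once the differentiated semigroup identity $\frac{d}{dT}\TT_{t,T}f=\TT_{t,T}\Lc f$ is in hand, the martingale property of $M^t$ follows from the Markov property and the formula for $E_{t,x}[|M^t_T|^2]$ is obtained algebraically from $\Lc f^2-2f\Lc f=\sum a_{ij}\partial_{x_i}f\,\partial_{x_j}f$ by applying the first part to $f^2$. No pathwise It\^o calculus or regularization is used.

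Your plan is coherent in outline, but its second step has a real gap. Section~\ref{sec:kolmogorov_operators} contains only the definition of the $C^{n,\alpha}_B$ spaces and the intrinsic Taylor formula; there is no mollification lemma there to invoke as a black box. More seriously, under Assumption~\ref{ass:hypo} the matrix $B$ may have nonzero $\ast$-blocks (cf.\ \eqref{eq:B_blocks}), and in that case the dilations $D_0(\lambda)$ are \emph{not} automorphisms of the underlying group, so the structure is not a homogeneous (stratified) group and the standard Folland--Stein mollification theory does not apply directly. This is precisely why the paper distinguishes between \cite{PPP1} (homogeneous case) and \cite{PagPig} (general $B$) for the Taylor formula. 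Producing $f_\eps\in C^\infty_c$ with $Yf_\eps\to Yf$ and $\partial_{x_ix_j}f_\eps\to\partial_{x_ix_j}f$ uniformly, for general $B$, would itself require a nontrivial argument that you have not supplied. The paper's route via the intrinsic Taylor formula sidesteps this issue entirely.
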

Formula \eqref{ae41} is a local result since no assumption is made on the generator of $X$ outside $D$.
Moreover, the It\^o formula above is stronger than the classical one as it is proved for a class of functions that are not twice-differentiable in the classical sense, 
but only with respect to the non-Euclidean geometry induced by 
the vector fields $\partial_{x_1},\dots,\p_{x_{p_{0}}}$ and $Y$ in Assumption \ref{ass:hypo}. 
Roughly speaking, we say that $f\in C^{2,\a}_{B}\left(]0,T_{0}[\times D\right)$ 
if $\partial_{x_1}f,\dots,\p_{x_{p_{0}}}f$ and $Yf$
exist on $]0,T_{0}[\times D$ 
and 
they are $\a$-H\"older continuous with respect to the semi-distance
 $$|T-t|^{1/2}+\big|y-e^{(T-t)B}x\big|_B, \qquad (t,x),(T,y) \in\mathbb{R}\times \mathbb{R}^d, $$
with $|\cdot|_B$ as in \eqref{e7}.  Note that $Yf$ is meant as a Lie derivative and not as a
combination of Euclidean derivatives: in principle, $\partial_t f$ and $\partial_{x_i}f$ with
$i>p_0$ do not exist; on the other hand, the Euclidean space $C^{2,\alpha}$ is included in
$C_{B}^{2,\alpha}$. We also highlight the fact that Assumption \ref{assum1and} is not required in
Theorem \ref{cor:ste101}.

Just like the classical It\^o formula is based on the standard Taylor expansion, 
the cornerstone of \eqref{ae41} is a non-Euclidean Taylor formula, proved in
\cite{PPP1} and \cite{PagPig} for functions in $C^{2,\alpha}_{B}$, that 
roughly states that
\begin{equation}\label{eq:int_tay_2nd}
f(T,y) = \mathbf{T}^{(2)}_{(t,x)}(T,y) +
\text{O}\big(|T-t|^{\frac{2+\alpha}{2}}\big)+\text{O}\big(\big|y-e^{(T-t)B}x\big|_B^{2+\alpha}\big)
\quad \text{as }  (T,y)\to (t,x)\in\, ]0,T_{0}[\times D,
\end{equation}
where
\begin{align}\label{eq:int_tay_2nd_2}
\mathbf{T}^{(2)}_{(t,x)}(T,y) &= f(t,x) + (T-t) Y
f(t,x)+\sum_{i=1}^{p_0}\big(y-e^{(T-t)B}x\big)_{i}\p_{x_{i}}f(t,x)\\
  &\quad +\frac{1}{2}\sum_{i,j=1}^{p_0}\big(y-e^{(T-t)B}x\big)_{i}\big(y-e^{(T-t)B}x\big)_{j}\p_{x_{i},x_j}f(t,x).
\end{align}
With \eqref{eq:int_tay_2nd}-\eqref{eq:int_tay_2nd_2} at hand, it is possible to outline the main arguments the proof of Theorem \ref{cor:ste101} is built upon. Analogously to the classical case, the key step is proving that 
 \begin{equation}\label{eq:diff_semigroup}
\frac{E_{t,x}[f(T,X_T)]-f(t,x)}{T-t}
  \longrightarrow \Lc f(t,x) \qquad \text{as }T-t\to 0^+,
\end{equation}
uniformly w.r.t. $x\in\mathbb{R}^d$, for any $f\in C_{B}^{2,\alpha}$  with compact support in
$]0,T_{0}[\times D$. Applying \eqref{eq:int_tay_2nd} yields $$
\frac{E_{t,x}[f(T,X_T)]-f(t,x)}{T-t} = \underbrace{\frac{E_{t,x}\Big[
\mathbf{T}^{(2)}_{(t,x)}(T,X_T) -f(t,x) \Big]}{T-t}}_{=:g_1(t,T)}
+\underbrace{\frac{E_{t,x}\Big[\text{O}\big(\big|X_T-e^{(T-t)B}x\big|_B^{2+\alpha}\big)\Big]}{T-t}}_{=:g_2(t,T)}+\,
\text{O}\big(|T-t|^{\frac{\alpha}{2}}\big). $$ It is then clear that \eqref{eq:diff_semigroup}
holds true if
\begin{equation}
g_1(t,T) \longrightarrow \Lc f(t,x),\qquad g_2(t,T) \longrightarrow 0,\qquad \text{as } T-t\to 0^+,
\end{equation}
uniformly w.r.t. $x\in\mathbb{R}^d$. Now, while the proof of the first limit above is quite
straightforward and stems simply from the fact that $X$ is locally generated by $\Ac_t$ on $D$
(see Definition \ref{def:local_diffusion}), the second limit is a deeper result and represents the
main element of novelty in the proof of Theorem \ref{cor:ste101}. In particular, we note that
$g_2(t,T) \longrightarrow 0$ is a consequence of the fact that
\begin{equation}
\lim
_{T-t\rightarrow 0^+}
\frac{E_{t,x}\Big[\caratt_{
\{|X_T -x|<\d\} }\big|X_T-e^{(T-t)B}x\big|_B^{2+\a}\Big]}{T-t}  
=0 ,\qquad
\d>0,
  \end{equation}
uniformly w.r.t. $x\in H$ compact subset of $D$, and we emphasize that the latter is stronger than
the classical general estimate for diffusion processes (see \cite{FriedmanSDE1} or
\cite{MR3726894})
\begin{equation}
\lim
_{T-t\rightarrow 0^+}
\frac{E_{t,x}\big[\caratt_{
\{|X_T -x|<\d\} }|X_T-x|^{2+\a}\big]}{T-t}  
=0 ,\qquad 
\d>0,
  \end{equation}
since the intrinsic quasi-norm $|\cdot|_B$ on $\mathbb{R}^d$ is such that $|x|=\text{o}(|x|_B)$ as
$x\to 0$.

The second main result of the paper is the theorem below that states the existence of a local (on
$D$) transition density $\G(t,x;T,\xi)$  for $X$, reveals its {intrinsic} regularity w.r.t. both
the forward and backward variables and shows that it solves a forward and a backward Kolmogorov
equation on $]t,T_0[\times D$ and $]0,T[\times D$, respectively. Before stating the result, we
need to introduce the last additional assumption, which is only needed to prove the regularity
w.r.t. the backward variables.
\begin{assumption}\label{assum:feller}
$X$ is a \emph{Feller process on $D$}, i.e. for any $T\in\, ]0,T_{0}[$ and bounded {$\phi\in
C(\R^d)$}
the function $(t,x)\mapsto 
E_{t,x}[\phi(X_T)]$ is continuous on $]0,T[\times D$.
\end{assumption}
Note that, since the coercivity condition in Assumption \ref{assum1and}-ii) 
only holds on $D$, the Feller property for the semigroup $
\phi \mapsto E_{t,\cdot}[\phi(X_T)]$ is not ensured. This is due to the fact that the trajectories
of $X$ are allowed to leave and re-enter the domain $D$, but no assumption is made on the
generator of $X$ outside $D$. Had Assumption \ref{assum1and} been satisfied for $D=\mathbb{R}^d$,
the Feller property would stem from PDEs arguments, namely the existence and regularity results
for the fundamental solution of $\Lc$ on $\mathbb{R}^d$ that were proved in \cite{MR1386366} and
\cite{DiFrancescoPascucci2} be means of the so-called parametrix method.
\begin{theorem}
\label{th:main}
Let $X$ be a 
\emph{local diffusion on $\R^d$ generated by $\Ac_t$ on $D$} (in the sense of Definition
\ref{def:local_diffusion}) and let Assumptions \ref{ass:hypo} and \ref{assum1and} be in force.
Then:
\begin{enumerate}
\item[a)] $X$ has a local transition density $\Gamma$ on $D$, namely a non-negative measurable 
function $\G(t,x;T,y)$ defined for any $0< t<T<T_0$ and $x\in\R^d$, $y \in D$, such that
\begin{equation}
p(t,x;T,A) = \int_{A} \G(t,x;T,y) \dd y, \qquad A\in\mathcal{B}(D).
\end{equation}
Furthermore, $\G(t,x;T,\cdot)$ is continuous on $D$ and locally bounded uniformly w.r.t.
$x\in\R^d$;
\item[b)] if $N\geq 2$, then for any $(t,x)\in\, ]0,T_0[\times \R^d$ the function $\Gamma(t,x;\cdot,\cdot)\in C^{N,\alpha}_B(]t,T_0[\times D)$ and solves the forward Kolmogorov equation
\begin{equation}\label{eq:forward_kolm}
\Lc^* u = 0, \qquad \text{on }]t,T_0[\times D,
\end{equation}
where $
\Lc^* $ is the formal adjoint of $
\Lc$;
\item[c)] if Assumption \ref{assum:feller} is also in force, 
then for any $(T,y)\in\, ]0,T_0[\times D$ the function $\Gamma(\cdot,\cdot;T,y)\in
C^{N+2,\alpha}_B(]0,T[\times D)$ and solves the backward Kolmogorov equation
\begin{equation}\label{eq:backward_kolm}
\Lc u = 0, \qquad \text{on }]0,T[\times D.
\end{equation}
\end{enumerate}
\end{theorem}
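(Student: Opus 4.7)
My plan is to construct the local density as the restriction to $D$ (in the forward variable) of the fundamental solution of a global extension of $\Lc$. Specifically, I would first extend the coefficients $a_{ij}, a_i$ from $D$ to functions $\bar a_{ij}, \bar a_i$ on $]0,T_0[\times\R^d$ satisfying Assumption \ref{assum1and} globally, and denote the resulting operator by $\bar\Lc$. By the parametrix construction of \cite{MR1386366, DiFrancescoPascucci2}, $\bar\Lc$ admits a fundamental solution $\bar\Gamma(t,x;T,y)$ which is intrinsically regular in both variable pairs, solves the corresponding forward and backward Kolmogorov equations, and enjoys Gaussian-type upper bounds in the intrinsic quasi-distance $|\cdot|_B$. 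The candidate local density is $\Gamma := \bar\Gamma$, and the heart of the proof is then to identify it with the law of $X_T$ on $D$ and to transfer its PDE and regularity properties.

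The core of part (a) is to establish, for every $\phi \in C_{c}(D)$,
$$
E_{t,x}[\phi(X_T)] = \int_D \bar\Gamma(t,x;T,y)\,\phi(y)\dd y.
$$
Fix a compact $K'$ with $\text{supp}\,\phi \Subset K' \Subset D$ and a space-time cutoff $\chi$ supported in $]0,T_0[\times D$ with $\chi \equiv 1$ on a neighborhood of $[t,T]\times K'$. The Cauchy-problem function $\bar u(s,z) := \int_{\R^d} \bar\Gamma(s,z;T,y)\,\phi(y)\dd y$ solves $\bar\Lc \bar u = 0$ on $]0,T[\times \R^d$ with $\bar u(T,\cdot)=\phi$, and $\chi\bar u \in C^{2,\alpha}_B$ has compact support in $]0,T_0[\times D$. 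Applying the intrinsic It\^o formula (Theorem \ref{cor:ste101}) to $\chi \bar u$ yields
$$
E_{t,x}[\phi(X_T)] - \bar u(t,x) = E_{t,x}\!\left[\int_t^T \Lc(\chi \bar u)(s,X_s)\dd s\right],
$$
since $\chi(T,\cdot)\bar u(T,\cdot) = \phi$ on $\R^d$. Because $\chi \equiv 1$ and $\Lc = \bar\Lc$ on $[t,T]\times K'$, the integrand vanishes whenever the path stays inside $K'$; I would then exhaust $D$ by compacts $K'_n \uparrow D$ and use the strong Markov property of $X$ at the exit times $\tau_{K'_n}$, together with the Gaussian upper bounds on $\bar\Gamma$, to show that the residual error driven by exits from $K'_n$ tends to zero. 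Continuity in $y$ and local boundedness in $x$ of $\Gamma$ then follow from the parametrix estimates on $\bar\Gamma$.

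For parts (b) and (c), the intrinsic $C^{N,\alpha}_B$-regularity in the forward variables and the forward equation $\Lc^*\Gamma = 0$ on $]t,T_0[\times D$ follow directly from the corresponding properties of $\bar\Gamma$, using that $\Lc^* = \bar\Lc^*$ on $D$ (the condition $N\geq 2$ ensures that $\Lc^*$ is a well-defined operator with coefficients in $C^{0,\alpha}_B$). For the backward regularity and equation in (c), the parametrix construction again provides $\bar\Gamma(\cdot,\cdot;T,y) \in C^{N+2,\alpha}_B$ solving $\bar\Lc u = 0$ on $]0,T[\times D$; the role of the Feller property from Assumption \ref{assum:feller} is to upgrade the identification from part (a) from an a.e.\ to a pointwise statement in $(t,x)$, via an approximate-identity argument in $\phi$ applied to the continuous map $(t,x) \mapsto E_{t,x}[\phi(X_T)]$.

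The principal obstacle is the residual-error control in step (a): since $X$ may exit and re-enter $D$ freely, and its generator is unspecified outside $D$, a single-scale localization does not close the argument. The technical heart should be an iterated strong-Markov decomposition at the exit times from $K'_n$, together with the sharp short-time intrinsic estimates underpinning Theorem \ref{cor:ste101} — in particular the refined variant controlling $(T-t)^{-1}E_{t,x}\big[\caratt_{\{|X_T-x|<\delta\}}\,\big|X_T - e^{(T-t)B}x\big|_B^{2+\alpha}\big]$ in short time — which together with the Gaussian pointwise bounds on $\bar\Gamma$ make the error in the It\^o identity compatible with the desired limit as $K'_n \uparrow D$.
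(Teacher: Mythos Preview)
Your approach has a fundamental gap: the identification $\Gamma = \bar\Gamma$ that you aim for is false in general. The local density $\Gamma(t,x;T,\cdot)$ genuinely depends on the behavior of $X$ outside $D$, because paths can leave and re-enter $D$; since the generator of $X$ is completely unspecified outside $D$, two different $\Ac_t$-local diffusions agreeing on $D$ will typically have different transition densities even on Borel subsets of $D$. Your candidate $\bar\Gamma$, by contrast, is determined solely by the chosen extension $\bar\Lc$ and is insensitive to what $X$ does outside $D$. Concretely, $\int_D \bar\Gamma(t,x;T,y)\,\dd y < 1$ whenever $D \neq \R^d$, whereas one can build $\Ac_t$-local diffusions $X$ (e.g.\ with strong inward drift near $\partial D$ from the outside) for which $P_{t,x}(X_T \in D)$ is as close to $1$ as desired. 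Hence the residual error in your It\^o identity cannot vanish as $K'_n \uparrow D$: the quantity $E_{t,x}[\phi(X_T)] - \bar u(t,x)$ is fixed and generally nonzero, independent of the localization scale.

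The paper's proof repairs this by working not with the global fundamental solution $\tilde\Gamma$ but with the Green function $G$ of $\Lc$ on a cylinder $]0,T_0[\times S_\eps(x_0)\Subset\, ]0,T_0[\times D$, which vanishes on the lateral boundary. A Feynman--Kac argument (this is where the intrinsic It\^o formula enters) gives $P_{t,x}(X_T\in A,\ \tau^{(t)}>T)=\int_A G(t,x;T,\xi)\,\dd\xi$. One then introduces the stopping times $\sigma_n$ marking successive re-entries of $X$ into a compact $\bar V\Subset S_\eps(x_0)$ and obtains
\[
\Gamma(t,x;T,\xi) \;=\; \sum_{n=1}^\infty E_{t,x}\big[\,G(\sigma_n, X_{\sigma_n}; T, \xi)\,\caratt_{\sigma_n < T}\,\big],
\]
which correctly encodes the unknown excursions of $X$ outside $D$ through the random pairs $(\sigma_n, X_{\sigma_n})$. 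Convergence of the series is controlled via a maximal estimate yielding $\sum_n P_{t,x}(\sigma_n<T)<\infty$, and regularity in the forward variables then follows by differentiating termwise using Schauder estimates on $G$. For part (c), the paper does not invoke $\tilde\Gamma$ either: it upgrades the Feller property to strong Feller via Schauder estimates, shows that $u_{\phi,T}(t,x)=E_{t,x}[\phi(X_T)]$ solves $\Lc u=0$ on $]0,T[\times D$ for every bounded measurable $\phi$, and concludes through the identity $\Gamma(t,x;T,\xi)=E_{t,x}[\Gamma(s,X_s;T,\xi)]$.
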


This statement partially generalizes \cite{kusuoka-stroock}, Sec. 4, where analogous results were
obtained under the assumption that the coefficients of the generator are smooth 
on $D$. In particular, the main assumption in the latter reference is a sort of
local hypoellipticity condition for the generator, expressed in terms of Malliavin's matrix, on a
given domain $D$ of $\mathbb{R}^d$, which reduces to our Assumptions \ref{ass:hypo} and
\ref{assum1and} with $N=\infty$ when $\Ac_t$ is of the form \eqref{ae75bis}. So, if on the one
hand the framework in \cite{kusuoka-stroock} is more general, on the other hand our results are stronger in this
particular setting in that they are carried out by assuming the coefficients belonging to the
intrinsic H\"older space $C^{N,\alpha}_B(]0,T[\times D)$. Again, we stress the fact that
$C^{N,\alpha}_B(]0,T[\times D)$ not only does contain $C^{\infty}(]0,T[\times D)$, but also
includes the standard H\"older space $C^{N,\alpha}(]0,T[\times D)$. Recent results on the local density assuming standard regularity of the coefficients were proved in \cite{BallyCar}, under local strong H\"ormander-type conditions, and in \cite{Pigato}, under local weak H\"ormander-type conditions for two-dimensional diffusions.

The proof of Theorem \ref{th:main} partially relies on some existence and regularity results  (see
\cite{DiFrancescoPascucci2} and \cite{Francesco} among others) obtained in a PDEs' context for the
fundamental solution and the Green functions of 
Kolmogorov operators, as well as on some Schauder estimates (see again \cite{Francesco}). However,
it is important to stress that the latter results alone are not enough to prove Theorem
\ref{th:main}. This is due to the fact that our structural assumptions on the generator of $X$
only hold on $D$, whereas there is no assumption on what is the behavior of the process outside
$D$. For this reason, it will be necessary to combine the PDE results mentioned above with some
probabilistic interlacing techniques and a crucial role will be played by the It\^o formula of
Theorem \ref{cor:ste101}.

More in detail, we adapt and customize the localization technique introduced in \cite{kusuoka-stroock}, Sec. 4.
We first prove a Feynman-Kac formula (Lemma \ref{lemm:ste3})
that allows to link the solutions of the Cauchy-Dirichlet problem for $\Lc$ on suitable cylinders
of $]0,T_0[\times D$ to the semigroup of the stopped diffusion.
As this step is based on the application of It\^o formula to the solution of the Cauchy-Dirichlet
problem and the latter is not of class $C^{2,\alpha}$ in the classical sense but only intrinsically, 
it is clear that the It\^o formula needed here is the one in Theorem \ref{cor:ste101} and not the
classical one. The proof of the existence of the local density can be then completed by following
closely the procedure employed by Kusuoka and Stroock, which makes use of a sequence of stopping
times that keep track of when the process exits and re-enters the domain $D$.

Once Part a) is proved, we need to depart from the latter procedure in order to prove part Part b)
and Part c). In particular, to obtain the intrinsic regularity of the local density
$\Gamma(t,x;T,y)$ w.r.t. the forward variables $(T,y)\in D$, it will be crucial to employ the
Schauder internal estimates for the solutions of $\Lc u=0$ proved in \cite{Francesco}, combined
with the Gaussian upper bounds for the Green function of ${\Lc}$ proved in the same reference.
Once we have proved that $\Gamma(t,x;\cdot,\cdot)\in C^{N,\alpha}_B(]t,T_0[\times D)$, then the
fact that $\Gamma(t,x;\cdot,\cdot)$ solves \eqref{eq:forward_kolm} simply follows because the
latter is satisfied by the transition probability kernel of $X$ in the distributional sense (see
Remark \ref{rem:kolm_forw_dist}).

To prove that $\Gamma(\cdot,\cdot;T,y)\in C^{N+2,\alpha}_B(]0,T[\times D)$ and solves
\eqref{eq:backward_kolm}, we first show that the same holds true for the function $(t,x)\to
E_{t,x}[\phi(X_T)]$ for any $\phi\in C_b(\mathbb{R}^d)$. This step is based again on a Feynman-Kac
formula and a crucial role is played one more time by the intrinsic It\^o formula of Theorem
\ref{cor:ste101}. Finally, Part c) follows by proving that the same properties hold true for any
bounded measurable function $\phi$ on $\mathbb{R}^d$. We remark that this last step is based on
the fact that $X$ actually enjoys the strong Feller property, namely the property in Assumption
\ref{assum:feller} extended to bounded measurable functions, which we prove in Lemma
\ref{lemm:strong_feller} by assuming the {standard} Feller property. Here we heavily rely again on
the Schauder estimates in \cite{Francesco}. We point out that the latter result, i.e. proving the
\emph{strong} Feller property starting from the \emph{standard} one, might enjoy an independent
interest as it generalizes some previous results obtained in \cite{schilling} under stronger
assumptions, basically existence and uniform boundedness of the global transition density.

We conclude this introduction mentioning that, 
as an application of our results it should be possible to prove sharp  Gaussian upper bounds 
for the local transition density $\G(t,x;T,y)$ and its derivatives.
The bounds would be analogous to those proved in \cite{kusuoka-stroock}, Theor. 4.5, for a wider
class of local generators, except that they would be explicit and valid under lower regularity
assumptions on the coefficients.

The rest of the paper is organized as follows. In Section \ref{sec:kolmogorov_operators} we recall
the definition of $B$-quasi-norm, the $B$-intrinsic H\"older spaces and the related intrinsic
Taylor formula. In Section \ref{sec:model} we give the precise definition of $\locdiff$-local
diffusion on $\R^d$ and prove Theorem \ref{cor:ste101}. In Section \ref{sec:model} we prove
Theorem \ref{th:main}. In Appendix \ref{app:pdes} we collect some useful PDE results for
$\Lc$-like operators, and in Appendix \ref{sec:app_markov} we recall some classic construction
procedures for Markov processes.


\section{Preliminaries: H\"older spaces and Taylor formula}\label{sec:kolmogorov_operators}
We recall the following useful characterization of Assumption \ref{ass:hypo}, proved in
\cite{LanconelliPolidoro1994}.
\begin{lemma}
Assumption \ref{ass:hypo} is fulfilled if and only if $B$ takes the block form
\begin{equation}\label{eq:B_blocks}
B=\left(
\begin{array}{ccccc}
\ast & \ast & \cdots & \ast & \ast \\ B_1 & \ast &\cdots& \ast & \ast \\ 0 & B_2 &\cdots& \ast&
\ast \\ \vdots & \vdots &\ddots& \vdots&\vdots \\ 0 & 0 &\cdots& B_r& \ast
\end{array}
\right)
\end{equation}
where $B_{j}$ is a $(p_j\times p_{j-1})$-matrix with full rank (equal to $p_{j}$) for
$j=1,\dots,r$, the $\ast$-blocks are arbitrary, $p_0\geq p_1\geq \cdots \geq p_r\geq 1$ and $p_0 +
p_1 + \cdots + p_r = d$.
\end{lemma}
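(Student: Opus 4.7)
The plan is to reduce Assumption \ref{ass:hypo} to a linear-algebraic controllability condition on the pair $(B,V_0)$, where $V_0:=\mathrm{span}(e_1,\dots,e_{p_0})\subset \R^d$, and then to translate that condition into the claimed block decomposition of $B$.

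First I would compute the iterated brackets. A direct calculation gives $[\partial_{x_i},Y]=\sum_{k=1}^d B_{ki}\partial_{x_k}$ for $i=1,\dots,p_0$, and since $B$ has constant entries the relation iterates to
\[
(\mathrm{ad}_Y)^{j}\partial_{x_i}=\sum_{k=1}^d (B^j)_{ki}\,\partial_{x_k},\qquad i=1,\dots,p_0,\;j\geq 0.
\]
All brackets of the $\partial_{x_i}$'s and of their $\mathrm{ad}_Y$-images among themselves vanish, so the Lie algebra generated by $\partial_{x_1},\dots,\partial_{x_{p_0}},Y$ is spanned, as a vector space, by $Y$ together with the $x$-independent vector fields $B^j e_i\cdot\nabla_x$ with $i\leq p_0$ and $j\geq 0$. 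By Cayley--Hamilton only $j\leq d-1$ is relevant, so Assumption \ref{ass:hypo} is equivalent to the Kalman-type controllability condition
\[
V_0+B V_0+B^2 V_0+\cdots+B^{d-1}V_0=\R^d.
\]

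Next I would set up the filtration $V_k:=V_{k-1}+BV_{k-1}$ (with $V_{-1}:=\{0\}$), so that the H\"ormander condition rephrases as $V_r=\R^d$ for some $r\geq 0$. The ``if'' direction is then almost by inspection: if $B$ has the displayed block shape, then $BV_{k-1}$ contains, modulo $V_{k-1}$, the image of $B_k$, which has full rank $p_k$, so by induction $\dim V_k=p_0+\cdots+p_k$ and eventually $V_r=\R^d$.

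For the converse, I would choose a basis of $\R^d$ adapted to the flag $V_0\subset V_1\subset\cdots\subset V_r=\R^d$: the first $p_0$ vectors are $e_1,\dots,e_{p_0}$ (which already span $V_0$), and for each $k\geq 1$ the next $p_k:=\dim V_k-\dim V_{k-1}$ vectors are chosen to complete a basis of $V_k$. Since $B(V_{k-1})\subset V_k$, in this basis $B$ is zero in every block strictly above the super-diagonal, and the sub-diagonal blocks are precisely the representations of the maps $V_{k-1}/V_{k-2}\to V_k/V_{k-1}$ induced by $B$. The step I expect to require most care is the monotonicity-and-full-rank claim: using $B(V_{k-2})\subset V_{k-1}$ one sees that $B$ descends to a well-defined surjective linear map $V_{k-1}/V_{k-2}\twoheadrightarrow V_k/V_{k-1}$, and surjectivity simultaneously yields $p_k\leq p_{k-1}$ and $\mathrm{rank}\,B_k=p_k$. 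The $\ast$-blocks above the $B_k$'s remain unconstrained, so $B$ indeed takes the claimed form.
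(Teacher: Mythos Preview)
The paper does not prove this lemma; it simply attributes the result to Lanconelli and Polidoro (1994) and recalls the statement. Your argument is the standard one and is correct in substance. One slip of the pen: you write that in the adapted basis ``$B$ is zero in every block strictly above the super-diagonal''; the inclusion $B(V_{k-1})\subset V_k$ actually forces the blocks strictly \emph{below the sub-diagonal} to vanish (i.e.\ $B$ is block upper Hessenberg), which is exactly the shape displayed in the statement. The rest of your sketch --- the reduction of H\"ormander's condition to Kalman controllability via $(\mathrm{ad}_Y)^j\partial_{x_i}=\sum_k(B^j)_{ki}\partial_{x_k}$, the flag $V_k=V_{k-1}+BV_{k-1}$, and the observation that $B$ induces surjections $V_{k-1}/V_{k-2}\twoheadrightarrow V_k/V_{k-1}$ (giving both $p_k\le p_{k-1}$ and $\mathrm{rank}\,B_k=p_k$) --- is correct.
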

We introduce the quasi-norm in $\R^{d}$ 
\begin{equation}\label{e7}
 |x|_B:=
 \sum_{j=0}^r \sum_{i=\bar{p}_{j-1}+1}^{\bar{p}_j} |x_{i}|^{2j+1}, \qquad 
{\bar{p}_j:= \sum_{k=0}^j p_k},\quad  {\bar{p}_{-1}:=0},
\end{equation}
that is homogeneous with respect to the dilations group
\begin{equation}\label{e7aa}
  D_{0}(\l)=\text{diag}\left(\l I_{p_{0}},\l^{3} I_{p_{1}},\dots,\l^{2r+1} I_{p_{r}}\right),\qquad
  \l>0.
\end{equation}
For any $(t,x)\in\Rdd$ and $i=1,\cdots,p_0$, we denote by
\begin{equation}\label{eq:def_curva_integrale_campo}
 e^{\d \p_{x_{i}}}(t,x)=(t,x+\delta e_i),\qquad
 e^{\d Y }(t,x)=(t+\delta,e^{\delta B}x),\qquad \delta>0,
\end{equation}
the integral curves of the vector fields $\p_{x_{1}},\cdots,\p_{x_{p_0}},Y$ starting at $(t,x)$.
{Here $e_i$ denotes the $i$-th element of the canonical basis of $\R^d$.}
Now, let $Q$ be a domain in $\Rdd$. For any $(t,x)\in Q$ we set
  $$\d_{(t,x)}:=\sup\left\{\bar{\d}\in\,]0,1]\mid e^{\d \p_{x_{1}}}(t,x),\cdots,e^{\d \p_{x_{p_0}}}(t,x),
  e^{\d Y}(t,x)\in Q\text{ for any }\d\in [-\bar{\d},\bar{\d}]\right\}.$$
If $V$ is compactly contained in $Q$ (hereafter we write $V\Subset Q$), we set
$\d_{V}=\inf\limits_{(t,x)\in V}\d_{(t,x)}.$
Note that $\d_{V}\in\,]0,1]$. 
\begin{definition}\label{def:intrinsic_alpha_Holder2}
Let $\a_1\in\,]0,1]$, $\a_2\in\, ]0,2]$ and $i=1,\cdots,p_0$. We say that $f\in
C_{\p_{x_{i}}}^{\alpha_1}( Q)$ and $g\in C_{Y}^{\alpha_2}( Q)$ if the following semi-norms are
finite
\begin{equation}\label{e12}
 \left\|f\right\|_{C_{\p_{x_{i}}}^{\alpha_1}(V)}:=\sup_{(t,x)\in V\atop 0<|\d|<\d_{V}}
 \frac{\left|f\left(e^{\delta \p_{x_{i}} }(t,x)\right)-
 f(t,x)\right|}{|\delta|^{\a_1}},
 \qquad \left\|g\right\|_{C_{Y}^{\alpha_2}(V)}:=\sup_{(t,x)\in V\atop 0<|\d|<\d_{V}}
 \frac{\left|g\left(e^{\delta Y }(t,x)\right)-
 g(t,x)\right|}{|\delta|^{\frac{\a_2}{2}}},
\end{equation}
for any $V\Subset Q$.
\end{definition}
We can now define the so-called $B$-H\"older spaces.
We point out that slightly different versions of such spaces were previously adopted in several works (see 
\cite{Francesco} and 
\cite{Manfredini} 
among others). Here we use the definition given \cite{PPP1}, which is basically  the one required
in order to prove an intrinsic Taylor formula where the remainder is in terms of the intrinsic
quasi-norm.
\begin{definition}\label{def:C_alpha_spaces}
Let $ Q$ a domain of $\Rdd$ and let $\a\in\,]0,1]$, then:
\begin{itemize}
  \item [i)] $f\in C^{0,\a}_{B}( Q)$ if $f\in C^{\a}_{Y}( Q)$ and $f\in C^{\a}_{{\p_{x_{i}}}}( Q)$ for any $i=1,\dots,p_{0}$;
  \item [ii)] $f\in C^{1,\a}_{B}( Q)$ if $f\in C^{1+\a}_{Y}( Q)$ and $\p_{x_{i}}f\in C^{0,\a}_{B}( Q)$ for any
  $i=1,\dots,p_{0}$;
  \item [iii)] {for} $n\in\mathbb{N}$ with $n\ge2$, $f\in C^{n,\a}_{B}( Q)$ if $Yf\in C^{n-2,\a}_{B}( Q)$ and
  $\p_{x_{i}}f\in C^{n-1,\a}_{B}( Q)$ for any
  $i=1,\dots,p_{0}$.
\end{itemize}
Moreover, for $f\in C^{n,\a}_{B}( Q)$ and $V\Subset Q$, we set
\begin{equation}
\|{f}\|_{C^{n,\a}_{B}( V)}:=\left\{
\begin{aligned}
&\|{f}\|_{C^{\a}_{Y}( V)}+\sum_{i=1}^{p_0} \|{f}\|_{C^{\a}_{\p_{x_{i}}}( V)},&& n=0\\
&\|{f}\|_{C^{\a+1}_{Y}( V)}+\sum_{i=1}^{p_0} \|{\partial_{x_i}f}\|_{C^{0,\a}_{B}( V)}, && n=1\\
&\|{Y f}\|_{C^{n-2,\a}_{B}( V)}+\sum_{i=1}^{p_0} \|{\partial_{x_i}f}\|_{C^{n-1,\a}_{B}( V)}, &&
n\geq 2.
\end{aligned}
\right.
\end{equation}
If $f\in C^{n,\a}_{B}(Q)$ and has compact support then we write $f\in
C_{0,B}^{n,\alpha}\left(Q\right)$.\end{definition}

The next result was proved in \cite{PPP1} in the particular case when the $\ast$-blocks  in
\eqref{eq:B_blocks} are null and then extended to the general case in \cite{PagPig}.
\begin{theorem}\label{th:main_tay}
Let $ Q$ be a domain of $\Rdd$, $\alpha\in\, ]0,1]$ and $n\in\Nzero$. If $f\in C^{n,\a}_{B}( Q)$
then we have:
\begin{enumerate}
\item[1)] there exist 
\begin{equation}\label{eq:maintheorem_part1}
 Y^k \partial_x^{\beta}f\in C^{n-2k-[\beta]_B,\alpha}_{B}( Q),
 \qquad {0\leq 2 k + [\beta]_B \leq n},
\end{equation}
where $[\b]_B$ denotes the height of the multi-index $\b$ defined as
\begin{equation}
 [\b]_B:  =   \sum\limits_{j=0}^r \sum_{i=\bar{p}_{j-1}+1}^{\bar{p}_j} (2 j +1) \b_i;
\end{equation}

\item[2)]
for any $(t_{0},x_{0})\in Q$, there exist two bounded domains $U,V$, such that $(t_{0},x_{0})\in U\subseteq V\subseteq Q$ 
and
\begin{equation}\label{eq:estim_tay_n_loc}
 \left|f(t,x)-\mathbf{T}^{(n)}_{(s,y)} f(t,x)\right|\le c_{B,U
 } \|f\|_{C^{n,\a}_{B}( V
 )}
 \Big( |s-t|^{1/2}+\big|y-e^{(s-t)B}x\big|_B\Big)^{n+\a},\qquad (t,x),(s,y) \in U,
\end{equation}
where $c_{B,U}$ is a positive constant 
and $\mathbf{T}^{(n)}_{(s,y)}$ is the {$n$-th order intrinsic Taylor polynomial of $f$ centered at
$(s,y)
$} given by 
\begin{equation}\label{eq:ste_Tay_pol}
 \mathbf{T}^{(n)}_{(s,y)} f(t,x)=  \sum_{\substack{k\in\N_0,\, \beta\in\N_0^d\\0\leq 2 k + [\beta]_B \leq n}}\frac{1}{k!\,\beta!}
 \big( Y^k \partial_{y}^{\beta}f(s,y)\big) (t-s)^k\big(x-e^{(t-s)B}y  \big)^{\beta},\qquad
 (t,x)\in\Rdd. 
\end{equation}
\end{enumerate}
\end{theorem}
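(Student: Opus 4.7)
The plan is to handle the two parts separately: Part 1 (existence and intrinsic regularity of the mixed derivatives) by induction on $n$ using the recursive definition of $C^{n,\alpha}_{B}$, and Part 2 (the remainder estimate) by integrating $f$ along a carefully chosen chain of integral curves of the generating vector fields joining $(s,y)$ to $(t,x)$.

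For Part 1, I would argue by induction on $n$. The cases $n=0$ and $n=1$ are immediate from Definition \ref{def:C_alpha_spaces}. For $n\ge 2$, the definition gives $Yf\in C^{n-2,\alpha}_{B}(Q)$ and $\partial_{x_{i}}f\in C^{n-1,\alpha}_{B}(Q)$ for $i=1,\dots,p_{0}$, so the inductive hypothesis already produces every iterated derivative whose outermost operator is either $Y$ or $\partial_{x_{i}}$ with $i\le p_{0}$. To capture a derivation $\partial_{x_{j}}$ in a block of index $\ell\ge 1$ I would exploit the commutator identity
\begin{equation*}
 [\partial_{x_{i}},Y]=\sum_{k=1}^{d}B_{ki}\partial_{x_{k}},
\end{equation*}
whose block structure \eqref{eq:B_blocks}, together with the full-rank assumption on the blocks $B_{1},\dots,B_{r}$, allows to invert and write each $\partial_{x_{j}}$ in block $\ell$ as a linear combination of nested commutators of $Y$ with $\partial_{x_{1}},\dots,\partial_{x_{p_{0}}}$ involving exactly $\ell$ powers of $Y$. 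This matches the homogeneous weight rule $[\b]_B$: each $Y$ contributes $2$ to the weight and each $\partial_{x_{k}}$ with $k\le p_{0}$ contributes $1$, summing to $2\ell+1$, which is precisely the weight of the target derivation. Reordering differentiations via these identities and invoking the inductive hypothesis yields the claim \eqref{eq:maintheorem_part1}.

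For Part 2, the natural strategy is to represent $f(t,x)-\mathbf{T}^{(n)}_{(s,y)}f(t,x)$ as the iterated remainder of the one-dimensional Taylor formula applied to $f$ along a finite concatenation of integral curves connecting the two points. A convenient choice is: first flow from $(s,y)$ along $Y$ for time $t-s$, reaching $(t,e^{(t-s)B}y)$, and then connect $(t,e^{(t-s)B}y)$ to $(t,x)$ via a composition of $\partial_{x_{i}}$-flows for the block-$0$ components of $x-e^{(t-s)B}y$ and of \emph{pseudo-flows} (built from the nested-commutator representations obtained in Part 1) for the higher-block components. On each leg the classical one-variable Taylor expansion, truncated at the order dictated by the homogeneous weight, produces both the corresponding monomial of \eqref{eq:ste_Tay_pol} and an integral remainder controlled by the $\a$-H\"older seminorm of a derivative of total weight exactly $n$. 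The homogeneity of the dilations $D_{0}(\lambda)$ in \eqref{e7aa} ensures that a block-$j$ leg of $|\cdot|_{B}$-length $\ell$ is traversed in parameter time $\ell^{1/(2j+1)}$, and this is precisely what makes the sum of the remainders scale as $\big(|t-s|^{1/2}+|x-e^{(t-s)B}y|_{B}\big)^{n+\a}$.

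The hard part will be the control of the pseudo-flows associated with blocks $j\ge 1$: since $\partial_{x_{i}}$ with $i>p_{0}$ is not among the distinguished vector fields, displacement in those directions must be synthesized through iterated commutators with $Y$, which produces spurious lower-order terms that need to be reabsorbed into $\mathbf{T}^{(n)}_{(s,y)}f$ rather than left in the remainder. Tracking this bookkeeping uniformly for $(t,x),(s,y)\in U\Subset V\Subset Q$, and checking that the resulting constant $c_{B,U}$ depends only on the intrinsic geometry on $U$, is the main effort of the argument; the general case with non-vanishing $\ast$-blocks in \eqref{eq:B_blocks} (handled in \cite{PagPig}) is more delicate than the purely nilpotent case of \cite{PPP1}, because the actual flow $e^{\tau B}$ coincides with its nilpotent principal part only up to lower-order terms, requiring an additional local comparison argument to close the estimate in the homogeneous quasi-norm.
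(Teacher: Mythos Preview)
The paper does not actually prove Theorem \ref{th:main_tay}: it is stated as a preliminary result and attributed to \cite{PPP1} (for the case of null $\ast$-blocks in \eqref{eq:B_blocks}) and to \cite{PagPig} (for the general case), with no argument given in the paper itself. So there is no in-paper proof to compare your proposal against.

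That said, your sketch is broadly faithful to the strategy in those references. The inductive scheme for Part 1, driven by the recursive Definition \ref{def:C_alpha_spaces} together with the commutator identity $[\partial_{x_i},Y]=\sum_k B_{ki}\partial_{x_k}$ and the full-rank block structure, is indeed how the existence of the higher-block derivatives is obtained there. For Part 2, the idea of connecting $(s,y)$ to $(t,x)$ by first flowing along $Y$ and then traversing the spatial increment block-by-block, with the higher blocks reached through iterated-commutator ``pseudo-flows'', and applying one-variable Taylor with integral remainder on each leg, is exactly the mechanism used in \cite{PPP1} and \cite{PagPig}. Your remark that the spurious lower-order terms generated by the commutator approximations must be reabsorbed into the polynomial rather than into the remainder, and that the non-nilpotent $\ast$-blocks require an additional comparison between $e^{\tau B}$ and its homogeneous principal part, correctly identifies the two technical hurdles that distinguish the general case from the homogeneous one.

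One caveat on Part 1: as written, your induction only directly produces derivatives of the form $Y^k\partial_{x_{i_1}}\cdots\partial_{x_{i_m}}f$ with all $i_\ell\le p_0$ (plus commutator combinations thereof), whereas \eqref{eq:maintheorem_part1} asserts the existence of $Y^k\partial_x^\beta f$ for \emph{arbitrary} multi-indices $\beta$ with $2k+[\beta]_B\le n$, i.e.\ including genuine Euclidean derivatives in the higher-block directions. Recovering these from the commutator representation requires an additional argument (essentially that the intrinsic regularity inherited from the commutators is strong enough to guarantee existence of the Euclidean partials, which is part of the content of \cite{PPP1}); you have gestured at this but not made it explicit.
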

\begin{corollary}\label{cor_tay_int}
If $f\in C^{n,\a}_{0,B}( Q)$, then \eqref{eq:estim_tay_n_loc} holds true with {$U=\text{\rm
supp}(f)$} and $V= Q$.
\end{corollary}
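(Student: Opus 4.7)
The strategy is to upgrade the local Taylor estimate of Theorem \ref{th:main_tay} to a uniform one on $K:=\text{supp}(f)$ by a compactness argument. For each $(t_{0},x_{0})\in K$, Theorem \ref{th:main_tay} provides a neighborhood pair $U_{(t_{0},x_{0})}\subseteq V_{(t_{0},x_{0})}\subseteq Q$ and a constant $c_{(t_{0},x_{0})}$ for which \eqref{eq:estim_tay_n_loc} holds. By compactness of $K$, I would extract a finite subcover $U_{1},\ldots,U_{N}$; set $C_{0}:=\max_{i}c_{B,U_{i}}$ and note that $\|f\|_{C^{n,\alpha}_{B}(V_{i})}\leq \|f\|_{C^{n,\alpha}_{B}(Q)}$ since each $V_{i}\subseteq Q$.

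I would then split into two regimes according to the quasi-distance $d((t,x),(s,y)):=|s-t|^{1/2}+|y-e^{(s-t)B}x|_{B}$. A Lebesgue-number-type argument on the compact $K$ yields $\rho>0$ such that every pair $(t,x),(s,y)\in K$ with $d<\rho$ lies in a common $U_{i}$; for such close pairs the conclusion is a direct application of \eqref{eq:estim_tay_n_loc} with constant $C_{0}$ and seminorm on $Q$. For the complementary regime $d\geq\rho$, since $d$ is uniformly bounded on $K\times K$ and bounded below by $\rho>0$, any uniform estimate $|f(t,x)-\mathbf{T}^{(n)}_{(s,y)}f(t,x)|\leq C\|f\|_{C^{n,\alpha}_{B}(Q)}$ translates into the required $d^{n+\alpha}$-bound at the cost of the factor $\rho^{-(n+\alpha)}$.

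To produce the uniform bound in the far regime, I exploit the compact-support hypothesis. Every derivative $Y^{k}\partial_{x}^{\beta}f$ with $0\leq 2k+[\beta]_{B}\leq n$ exists and is continuous on $Q$ by Theorem \ref{th:main_tay}(1), and vanishes identically on $Q\setminus K$ because $f$ and all its intrinsic derivatives are zero outside $K$. Given $(s,y)\in K$, one can link $(s,y)$ to a point of $Q\setminus K$ by a chain of points in $Q$ whose consecutive quasi-distances do not exceed $\rho$ and whose length is bounded uniformly in $(s,y)$ by compactness of $K$ together with $K\Subset Q$. Applying the $0$-th order case of \eqref{eq:estim_tay_n_loc} to $Y^{k}\partial_{x}^{\beta}f$ along each link yields $\sup_{K}|Y^{k}\partial_{x}^{\beta}f|\leq C_{1}\|f\|_{C^{n,\alpha}_{B}(Q)}$, with $C_{1}$ depending only on $K,Q,\rho,B$ and $n$. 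Substituting these pointwise bounds into the explicit polynomial \eqref{eq:ste_Tay_pol} and using that $|t-s|$ and $|x-e^{(t-s)B}y|_{B}$ are bounded on $K\times K$ yields the required uniform estimate.

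The main obstacle I anticipate is justifying the Lebesgue-number step, because $d$ is only a quasi-metric (non-symmetric, with a relaxed triangle inequality) while the classical lemma is stated for a genuine metric, and moreover the covering neighborhoods $U_{i}$ are Euclidean-open rather than $d$-open. On the compact $K$, however, $d$ and the Euclidean distance induce the same topology and are mutually controllable, so the standard compactness argument adapts with only cosmetic changes. A secondary technicality is the construction of the chain from $(s,y)$ to a point of $Q\setminus K$: this is granted since $K$ is compactly contained in the open set $Q$, hence a tubular neighborhood of $K$ lies inside $Q$ and accommodates such a chain of bounded length.
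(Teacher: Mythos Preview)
The paper does not supply a proof of this corollary; Theorem \ref{th:main_tay} and its corollary are both attributed to \cite{PPP1} and \cite{PagPig}, and the corollary is stated as an immediate consequence. Your compactness/Lebesgue-number argument with a near/far dichotomy is precisely the standard way to pass from the pointwise-local estimate of Theorem \ref{th:main_tay} to a uniform one on a compact support, and it is correct in outline.

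One technical point in your far-regime step deserves care. Your chain argument applied to $g:=Y^{k}\partial_{x}^{\beta}f$ yields $\sup_{K}|g|\le C'\|g\|_{C^{0,\alpha}_{B}}$, not directly a bound by $\|f\|_{C^{n,\alpha}_{B}(Q)}$; and the seminorms in Definition \ref{def:C_alpha_spaces} are \emph{not} nested (e.g.\ $\|h\|_{C^{1,\alpha}_{B}}$ controls $\|\partial_{x_{i}}h\|_{C^{0,\alpha}_{B}}$ but not $\|h\|_{C^{0,\alpha}_{B}}$, since the latter involves $\sup|\partial_{x_{i}}h|$ via the mean-value theorem). The remedy is to organize the chain argument top-down in the order of the derivative: for the top-order derivatives ($2k+[\beta]_{B}=n$) the seminorm $\|g\|_{C^{0,\alpha}_{B}}$ is a direct summand of $\|f\|_{C^{n,\alpha}_{B}}$ by the recursion, so your chain gives $\sup_{K}|g|\le C\|f\|_{C^{n,\alpha}_{B}}$ immediately; for a lower-order $g$, increments along a link in direction $\partial_{x_{j}}$ are bounded by $\sup|\partial_{x_{j}}g|$ (already controlled at the previous step), and increments along $Y$ by $\|g\|_{C^{1+\alpha}_{Y}}$, which again is a summand of $\|f\|_{C^{n,\alpha}_{B}}$. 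For derivatives involving directions $x_{i}$ with $i>p_{0}$ one also needs the quantitative form of part 1) of Theorem \ref{th:main_tay}, which is part of the content of \cite{PPP1}, \cite{PagPig}. With this bookkeeping your plan goes through.
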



\section{Local diffusions and {intrinsic} It\^o formula}\label{sec:model}
For a given $T_{0}>0$ we consider a continuous $\R^d$-valued strong Markov process  $X=(X_{t})_{t\in[0,T_0[}$
(in the sense of \cite{FriedmanSDE1
} as it is recalled in Appendix \ref{sec:app_markov}) with transition probability function $p=p(t,x;T,d\x)$, defined on a space
$$
\left(\O,\F,(\F_{T}^{t})_{0\le t\le T< T_{0}},(P_{t,x})_{0\le t< T_{0},x\in\R^d}\right).
$$
For any bounded Borel measurable function $\phi$, we denote by
\begin{equation}\label{ae67}
   E_{t,x}\left[\phi(X_{T})\right]:=(\TT_{t,T}\phi)(x):=\int_{\R^{d}}p(t,x;T,d\x)\phi(\x),\qquad 0\le t<T\le
   T_{0},\ x\in \R^{d},
\end{equation}
the $P_{t,x}$-expectation and the semigroup associated with the transition probability function
$p$, respectively (cf. Chapter 2.1 in \cite{FriedmanSDE1}). Hereafter 
we also fix a domain $D$, that is an open and connected subset of $\R^d$.

\begin{notation}
For a given function $f(t,T)$ with $T\in\, ]0,T_0[$ and $t\in[0,T[$, we set
\begin{align}
 \lim_{T-t\rightarrow 0^+} f(t,T) &:= \lim_{h\rightarrow 0^+} f(t,t+h) =\lim_{h\rightarrow 0^+} f(t-h,t), \qquad t\in[0,T_0[,
\end{align}
when the second and the third limits exist and coincide with each other.
\end{notation}
The following two sets of limits will be used to give the definition of {local diffusion}. 
\begin{itemize}
\item[{[\bf Lim-i)]}]
For any $t\in[0,T_{0}[$, $\d>0$, 
  and $H$ compact subset of $D$, there exist the limits
\begin{align}\label{ae51b}
  \lim_{T-t\rightarrow 0^+}\!\!\!\!\int\limits_{\{|\x-x|>\d 
  \}\cap H}\frac{p(t,x;T,d\x)}{T-t}
  &=0  ,  \qquad \text{uniformly w.r.t. $x\in\R^d$}, \\
   \label{ae51}
  \lim_{T-t\rightarrow 0^+}\!\!\!\!\int\limits_{|\x-x|>\d 
  }\frac{p(t,x;T,d\x)}{T-t}
  &=0, \qquad \text{uniformly w.r.t. $x\in H$.}
\end{align}

\item[{\bf [Lim-ii)]}]
For any $t\in[0,T_{0}[$, $\d>0$, 
  and $H$ compact subset of $D$, 
  and for any $i=1,\cdots,d$, there exist the limits
  \begin{align}
    \lim_{T-t\rightarrow 0^+}\!\!\!\!\int\limits_{|\x-x|<\d 
  }\!\!\!\!(\x-x)_{i}\frac{p(t,x;T,d\x)}{T-t}
  &=  \begin{cases}
\ac_{i}(t,x)+(Bx)_i & \text{if }i=1,\cdots,p_0
 \\
(Bx)_i &  \text{if } i=p_0+1,\cdots,d 
  \end{cases},\label{ae51c_bis}
  \\ \label{ae51d_bis}
  \lim_{T-t\rightarrow 0^+}\!\!\!\!\int\limits_{|\x-x|<\d 
  }\!\!\!\!(\x-x)_{i}(\x-x)_{j}\frac{p(t,x;T,d\x)}{T-t}
 & = \begin{cases}
\ac_{ij}(t,x) & \text{if } i,j = 1,\cdots, p_0 
\\
0 & \text{if $i$ or $j= p_0+1,\cdots,d$}  
  \end{cases},
\end{align}
uniformly w.r.t. $x\in H$, for some $B$ as in \eqref{eq:B_blocks} and $a_{ij},a_i\in
L^{\infty}_{\rm loc}([0,T_0[\times D)$.
\end{itemize}
\begin{definition}\label{def:local_diffusion}
Let 
$\Ac_t$ an operator as in \eqref{ae75bis}. We say that $X$ is a \emph{local diffusion 
generated by $\Ac_t$ on $D$} (an \emph{$\locdiff$-local diffusion
} in short) if
{\bf[Lim-i)]} and {\bf[Lim-ii)]} hold. In case 
they hold with $D = \R^d$ then we call $X$ a \emph{global diffusion 
generated by $\Ac_t$} (an
\emph{$\Ac_t$-global diffusion
} in short).
\end{definition}
The following proposition is useful for the applications because several models are defined in
terms of solutions to stochastic differential equations. It shows that (stopped) solutions of SDEs
are local diffusions in the sense of Definition \ref{def:local_diffusion}. 
\begin{remark}
Since we are dealing with stopping times, we point that we did not impose any right-continuity assumption on the filtrations $\F^t$. 
This is justified by the fact that, in the next proposition as well as in the rest of the paper, we will only consider hitting times of closed sets, which appear to be stopping times even if the the filtration is not right-continuous (see \cite{FriedmanSDE1}, Theorem 2.2 p. 25).
\end{remark}
\begin{proposition}\label{la6}
Let $\left(X_{t}\right)_{t\in[0,T_0[}$ be a continuous Markov process defined as
$X_{t}=\hat{X}_{t\wedge \t}$, where:
\begin{itemize}
  \item[i)] $\hat{X}$ is a solution of the SDE
\begin{align}\label{eq:sde}
 d\hat{X}_{t}=\m(t,\hat{X}_{t})dt+\s(t,\hat{X}_{t})dW_{t}
 \end{align}
where $W$ is a $n$-dimensional Brownian motion and the coefficients $\m$ and $\s$ are continuous
and as in \eqref{eq:ste105}-\eqref{eq:ste103};
  \item[ii)] $\t$ is the first exit time of $\hat{X}$ from a domain $D'$ containing $D$.
\end{itemize}
Then $X$ is a $\locdiff$-local diffusion on $\R^d$ in the sense of Definition
\ref{def:local_diffusion}.
\end{proposition}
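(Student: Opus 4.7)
The strategy is to reduce the verification of the local-diffusion limits \textbf{[Lim-i)]} and \textbf{[Lim-ii)]} for the stopped process $X = \hat{X}_{\cdot \wedge \tau}$ to standard short-time moment estimates for the SDE solution $\hat{X}$, exploiting the fact that stopping at the first exit $\tau$ from $D' \supseteq D$ forces $X$ to coincide with $\hat{X}$ with overwhelming probability on short time scales whenever it is started from a compact $H \subset D$.

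I would first note that $X$ is a continuous strong Markov process with transition function $p(t,x;T,\cdot)$, a standard fact for diffusions stopped at a first-exit time from an open set (cf.\ Chapter 2 of Friedman, as recalled in Appendix \ref{sec:app_markov}). Then, fixing a compact $H \subset D$ and setting $d_0 := \mathrm{dist}(H, \partial D') > 0$ (positive since $H$ is compactly contained in the open $D'$), I would pick a compact $K$ with $H \subset \mathrm{int}(K) \subset K \subset D'$. Since $\mu,\sigma$ are continuous, hence bounded on $K$, the Burkholder--Davis--Gundy inequality and Gr\"onwall's lemma yield, for any integer $p \geq 1$,
\begin{equation}
\sup_{x \in H} E_{t,x}\!\left[\sup_{s \in [t, T \wedge \tau_K]} |\hat{X}_s - x|^{2p}\right] \leq C_p\,(T-t)^p,
\end{equation}
where $\tau_K$ is the first exit time from $K$. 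Markov's inequality then produces the crucial exit-time bound $\sup_{x \in H} P_{t,x}(\tau \leq T) = O((T-t)^p)$ for every $p$.

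With this at hand, \textbf{[Lim-ii)]} and the second limit of \textbf{[Lim-i)]} follow along classical lines: on $\{\tau > T\}$ one has $X_T = \hat{X}_T$, and the usual short-time expansions for $E_{t,x}[(\hat{X}_T - x)_i]$ and $E_{t,x}[(\hat{X}_T - x)_i(\hat{X}_T - x)_j]$ (obtained by integrating $\mu$ and $\sigma\sigma^\top$ along paths and using the continuity of these coefficients on a neighbourhood of $H$) combined with the structural form \eqref{eq:ste105}--\eqref{eq:ste103} reproduce exactly the identities in \textbf{[Lim-ii)]}; the contribution of the residual event $\{\tau \leq T\}$ is $O((T-t)^p)$ and vanishes after division by $T-t$. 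The second limit of \textbf{[Lim-i)]} is obtained by applying Markov's inequality to the moment bound above with $p \geq 2$.

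The main obstacle is the first limit of \textbf{[Lim-i)]}, which demands uniformity in $x \in \mathbb{R}^d$. The key observation is that $\hat{X}_\tau \in \partial D'$ whereas $H \subset \mathrm{int}(D')$, so $\{X_T \in H\} \subseteq \{\tau > T,\ \hat{X}_T \in H\}$ and hence $P_{t,x}(X_T \in H) \leq P_{t,x}(\hat{X}_T \in H)$. For $x \notin \overline{D'}$ one has $\tau = t$ a.s., whence $X_T \equiv x \notin H$ and the integral vanishes identically. For $x \in \overline{D'}$ with $\mathrm{dist}(x,H) \geq \delta$, on any bounded portion of $\overline{D'}$ a moment estimate over a large common compact neighbourhood works uniformly; for the unbounded regime (only relevant when $D'$ is unbounded), writing $H \subset B(0,R)$ yields $\{\hat{X}_T \in H\} \subseteq \{|\hat{X}_T - x| \geq |x| - R\}$ whenever $|x| > R$, and the linear-growth $L^{2p}$ estimate $E_{t,x}|\hat{X}_T - x|^{2p} \leq C_p(T-t)^p(1+|x|)^{2p}$ (inherited from the linear $Bx$ term in $\mu$) produces, after controlling the ratio $(1+|x|)^{2p}/(|x|-R)^{2p}$ uniformly for $|x| \geq 2R$, a bound of order $(T-t)^p$. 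Choosing $p \geq 2$ and dividing by $T-t$ gives the required uniform vanishing, completing the verification.
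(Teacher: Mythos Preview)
The paper does not give an independent argument here: it simply records that the statement is a particular case of Lemma~2.3 in \cite{PPTaylor}, which verifies \textbf{[Lim-i)]}--\textbf{[Lim-ii)]} for stopped SDE solutions. Your direct approach via BDG/Gr\"onwall moment bounds, exit-time estimates and localization is exactly the kind of argument that lies behind such a citation, and for \textbf{[Lim-ii)]} and the second limit \eqref{ae51} of \textbf{[Lim-i)]} (uniformity over compacts $H\subset D$) your outline is correct and standard.

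There is, however, a gap in your handling of the first limit \eqref{ae51b}. You invoke a linear-growth moment estimate $E_{t,x}|\hat{X}_T-x|^{2p}\le C_p(T-t)^p(1+|x|)^{2p}$, claiming it is ``inherited from the linear $Bx$ term in $\mu$''. But the structural form \eqref{eq:ste105}--\eqref{eq:ste103} is imposed only on $[0,T_0[\times D$; on $D'\setminus D$ the coefficients $\mu,\sigma$ are merely continuous, with no growth control whatsoever. When $D'$ is unbounded this estimate is therefore unjustified, and with it your control of $P_{t,x}(\hat X_T\in H)$ for $|x|$ large. The argument is easily repaired under the additional hypothesis that $D'$ is bounded (your compact-set estimate then covers all of $\overline{D'}$), or that $\mu,\sigma$ have at most linear growth on $D'$---one of which is presumably built into the cited lemma---but as written the unbounded-$D'$ case is not covered by your sketch.
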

\begin{proof}
The statement is a particular case of Lemma 2.3 in \cite{PPTaylor}, which proves that {\bf[Lim-i)]} and {\bf[Lim-ii)]} hold for the kernel of $X$.
\end{proof}
We have the first key result.
\begin{proposition}\label{prop:ste1}
Let Assumptions \ref{ass:hypo} be in force. Then $X$ is an {$\locdiff$-local diffusion on $\R^d$}
if and only if {\bf[Lim-i)]} holds and for any $t\in[0,T_{0}[$, $\d>0$, 
  and $H$ compact subset of $D$, we have
\begin{align}
  &\lim_{T-t\rightarrow 0^+}\!\!\!\!\int\limits_{|\x-x|<\d 
  }\!\!\!\!\big(\x-e^{(T-t)B}x\big)_{i}\frac{p(t,x;T,d\x)}{T-t}
  =\ac_{i}(t,x),\label{ae51c}\\ \label{ae51d}
  &\lim_{T-t\rightarrow 0^+}\!\!\!\!\int\limits_{|\x-x|<\d 
  }\!\!\!\!\big(\x-e^{(T-t)B}x\big)_{i}\big(\x-e^{(T-t)B}x\big)_{j}\frac{p(t,x;T,d\x)}{T-t}
  =\ac_{ij}(t,x),\\
  &\limsup_{T-t\rightarrow 0^+}\!\!\!\!\int\limits_{|\x-x|<\d 
  } 
\big|  \x-e^{(T-t)B}x\big|^2_B \frac{p(t,x;T,d\x)}{T-t}
  <\infty,  \label{ae51e}
\end{align}
for any $i,j=1,\cdots,p_0$, uniformly w.r.t. $x\in H$, for some $B$ as in \eqref{eq:B_blocks} and $a_{ij},a_i\in
L^{\infty}_{\rm loc}([0,T_0[\times D)$.
\end{proposition}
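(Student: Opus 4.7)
The approach hinges on the algebraic identity
\[
\xi - e^{(T-t)B}x = (\xi - x) - c(t,T,x), \qquad c(t,T,x):=(e^{(T-t)B}-I)x = (T-t)Bx + O\bigl((T-t)^2|x|\bigr),
\]
with the remainder uniform for $x$ in compact subsets of $\R^d$. Integrating this identity against $p(t,x;T,d\xi)$, dividing by $T-t$, and exploiting the Taylor limit $c_i/(T-t)\to (Bx)_i$ together with the mass concentration $\int_{|\xi-x|<\delta}p\to 1$ (from {\bf[Lim-i)]}, equation \eqref{ae51}) allows me to translate between the Euclidean moments in {\bf[Lim-ii)]} and the intrinsic moments of $\xi-e^{(T-t)B}x$ in \eqref{ae51c}--\eqref{ae51e}.

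For the forward direction, if {\bf[Lim-ii)]} holds then for $i\leq p_0$ the identity yields
\[
\int_{|\xi-x|<\delta}(\xi-e^{(T-t)B}x)_i\,\frac{p}{T-t} = \int_{|\xi-x|<\delta}(\xi-x)_i\,\frac{p}{T-t} - \frac{c_i}{T-t}\int_{|\xi-x|<\delta}p,
\]
whose limit is $\bigl(a_i(t,x)+(Bx)_i\bigr)-(Bx)_i = a_i(t,x)$, giving \eqref{ae51c}; a bilinear expansion of the same kind, in which the cross corrections are of size $c_i = O(T-t)$ against the bounded first-moment integrals from \eqref{ae51c_bis}, delivers \eqref{ae51d}. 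The delicate point is \eqref{ae51e}: the intrinsic norm $|\cdot|_B^2$ captures a block-anisotropic scaling of the form $|(\xi-e^{(T-t)B}x)_i|\sim (T-t)^{(2j+1)/2}$ for $i$ in the $j$-th block, which {\bf[Lim-ii)]} exhibits only for the first block ($j=0$). I would recover this sharper scaling by using the nilpotent block structure \eqref{eq:B_blocks} delivered by Assumption \ref{ass:hypo}, through which higher-block components of $\xi-e^{(T-t)B}x$ are produced as iterated time-integrals of the first-block noise, and by invoking the iterated-integral moment bounds from Lemma 2.3 of \cite{PPTaylor} (cited through Proposition \ref{la6}).

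For the converse direction, the same identity read in reverse immediately converts \eqref{ae51c}--\eqref{ae51d} into the Euclidean limits of {\bf[Lim-ii)]} for indices $i,j\leq p_0$. For $i$ in the $j$-th block with $j\geq 1$, the homogeneity bound $|y_i|^{2/(2j+1)}\leq |y|_B^2$ (with $y:=\xi - e^{(T-t)B}x$) combined with \eqref{ae51e} produces $\int_{|\xi-x|<\delta}|y_i|^{2/(2j+1)}\,p \leq C(T-t)$, and on the truncated set $\{|\xi-x|<\delta\}$ the pointwise boundedness of $|y_i|$ interpolates this estimate to control both $\int|y_i|\,p$ and $\int|y_i|^2\,p$ at the order $O(T-t)$. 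Inserting these bounds into the identity $\int(\xi-x)_i\,p/(T-t) = \int y_i\,p/(T-t) + c_i/(T-t)\cdot \int p$, and using $c_i/(T-t)\to(Bx)_i$, recovers \eqref{ae51c_bis} for $i>p_0$; Cauchy--Schwarz against the first-block second moments from \eqref{ae51d} then yields the vanishing mixed-block limit in \eqref{ae51d_bis}.

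The principal difficulty is the implication {\bf[Lim-ii)]} $\Rightarrow$ \eqref{ae51e}: the anisotropic quasi-norm $|\cdot|_B$ is genuinely sharper than the Euclidean norm on coordinates outside the first block, and recovering the correct $(T-t)^{2j+1}$ scaling of the $j$-th block components requires exploiting the H\"ormander/commutator structure enforced by Assumption \ref{ass:hypo}, rather than being a direct consequence of {\bf[Lim-ii)]}. This forces the argument to reduce, via Proposition \ref{la6}, to moment estimates for the underlying SDE representation, which is where the characterization of $X$ as a diffusion with Kolmogorov-type generator plays its essential role.
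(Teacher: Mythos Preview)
Your algebraic setup and the easy translations between Euclidean and intrinsic first/second moments are correct and match the paper. There are, however, two genuine gaps.

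\textbf{Converse direction (intrinsic $\Rightarrow$ {\bf[Lim-ii)]}).} Your interpolation from $\int|y_i|^{2/(2j+1)}\,p\le C(T-t)$ (coming from \eqref{ae51e}) using only the pointwise bound $|y_i|\le C$ on $\{|\xi-x|<\delta\}$ gives $\int|y_i|\,p=O(T-t)$ and $\int y_i^2\,p=O(T-t)$, i.e.\ these quantities divided by $T-t$ are merely \emph{bounded}. But \eqref{ae51c_bis} and \eqref{ae51d_bis} require the intrinsic first and second moments for $i>p_0$ to be $o(T-t)$, not $O(T-t)$: otherwise the identity $\int(\xi-x)_i\,p/(T-t)=\int y_i\,p/(T-t)+c_i/(T-t)\int p$ does not force the limit $(Bx)_i$. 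The paper upgrades $O$ to $o$ by a $\rho$--splitting: on $\{\rho\le|\xi-x|<\delta\}$ one invokes \eqref{ae51}, and on $\{|\xi-x|<\rho\}$ the interpolation constant carries an extra factor $\rho^{\varepsilon}$; letting $\rho\downarrow 0$ yields the vanishing limit \eqref{vanishing}. This is the missing refinement.

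\textbf{Forward direction ({\bf[Lim-ii)]} $\Rightarrow$ \eqref{ae51e}).} Here the gap is structural. You propose to recover the anisotropic scaling by ``reducing, via Proposition~\ref{la6}, to moment estimates for the underlying SDE representation''. But Proposition~\ref{la6} goes only one way: it shows that an SDE solution is an $\Ac_t$-local diffusion. A general $\Ac_t$-local diffusion in the sense of Definition~\ref{def:local_diffusion} is \emph{not} assumed to arise from an SDE, so Lemma~2.3 of \cite{PPTaylor} (which concerns SDE moments) is not available. The paper instead works purely at the level of the transition kernel: by Remark~\ref{rem:new}, once {\bf[Lim-i)]}--{\bf[Lim-ii)]} hold, the It\^o formula \eqref{ae41} is already valid for $f\in C^2_0(]0,T_0[\times D)$. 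One applies it to a smooth cutoff $\phi_j$ equal to $(\xi-e^{(s-t)B}x)_j$ on $\{|\xi-x|<\delta\}$; for $j>p_0$ the martingale part has quadratic variation supported in $\{|X_s-x|\ge\delta\}$ (hence negligible by Lemma~\ref{lem:ste01}), and the drift reduces to $\langle B^{(j)},X_s-e^{(s-t)B}x\rangle$. The block structure \eqref{eq:B_blocks} makes $B^{(j)}$ depend only on components from blocks of index $\ge j-1$, which yields an inductive bound $E_{t,x}\big[|(X_T-e^{(T-t)B}x)_j|\,\caratt_{|X_T-x|<\delta}\big]^{2/(2i+1)}=O(T-t)$, block by block. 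This is precisely the ``iterated time-integral'' mechanism you allude to, but it is executed through the abstract It\^o formula for the Markov process, not through an SDE representation.
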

The following lemma 
formalizes the fact that $\Ac_t$ as in \eqref{ae75} is the generator of $X$ on $[0,T_0[\times D$.
\begin{proposition}\label{la5}
Let $X$ be a $\locdiff$-local diffusion on $\R^d$ and Assumption \ref{ass:hypo} be in force. Then,
for any {$\phi \in C_{0}(]0,T_{0}[\times D)$
} and 
$f\in C_{0,B}^{2
}\left(]0,T_0[\times D\right)$
we have 
\begin{align}
 \label{ae53}
 &\lim_{T-t\rightarrow 0^+} 
 \sup_{x\in\Rd} \big| {\big(\TT_{t,T}\phi(T,\cdot)\big)(x)-\phi(t,x)}\big|
 =0,\\
\label{ae71}
 &\lim_{T-t\rightarrow 0^+}
 \sup_{x\in\Rd}\bigg| \frac{\big(\TT_{t,T}f(T,\cdot)\big)(x)-f(t,x)}{T-t}-
 \Lc f(t,x)\bigg|
 =0,
\end{align}
for any $t\in]0,T_0[$. Moreover, for any $0< t< T< T_{0}$ and $x\in \R^d$, it holds that
\begin{equation}\label{ae62}
 \frac{d}{dT}\big(\TT_{t,T}f(T,\cdot)\big)(x)=\TT_{t,T}\big(
 \Lc f(T,\cdot)\big)(x).
\end{equation}
\end{proposition}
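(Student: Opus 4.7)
The proposition contains three assertions, and I would handle each via a common strategy: split $\R^d$ into a compact neighborhood $H$ of the spatial projection of $\text{supp}(\phi)$ or $\text{supp}(f)$ and its complement, thereby upgrading the compact-uniformity of Proposition \ref{prop:ste1} to uniformity in $x\in\R^d$. The point is that compact support makes the ``interior'' pieces (the test function itself or its Taylor polynomial) vanish outside $H$, while the tail limit \eqref{ae51b} is already globally uniform in $x$.

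For \eqref{ae53}, take $H$ a compact neighborhood of $K_\phi := \pi_x(\text{supp}(\phi))\Subset D$. If $x\notin H$ then $\phi(t,x)=0$ and $\phi(T,X_T)\neq 0$ forces $X_T\in K_\phi$, whence $|X_T-x|\geq\delta_0$ for some $\delta_0>0$; then $|\TT_{t,T}\phi(T,\cdot)(x)|\leq \|\phi\|_\infty\!\int_{K_\phi\cap\{|\xi-x|\geq\delta_0\}} p(t,x;T,d\xi)\to 0$ uniformly by \eqref{ae51b}. For $x\in H$ I would use uniform continuity of $\phi$ on $H$ to control $\{|X_T-x|<\delta\}$ and \eqref{ae51} to make $P_{t,x}(|X_T-x|\geq\delta)\to 0$ uniformly on $H$. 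An analogous outside-$H$ argument for \eqref{ae71} applies: with $H$ a compact neighborhood of $K_f := \pi_x(\text{supp}(f))$, all derivatives of $f$ at $(t,x)$ vanish outside $H$ so $\Lc f(t,x)=0$, and the proof reduces to $|E_{t,x}[f(T,X_T)]|/(T-t)\to 0$ uniformly, again furnished by \eqref{ae51b}.

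The substance of \eqref{ae71} lies in the treatment on $H$, using the intrinsic Taylor formula of Theorem \ref{th:main_tay} in the form $f(T,\xi)=\mathbf{T}^{(2)}_{(t,x)} f(T,\xi)+R$ with $|R|\leq c(|T-t|^{1/2}+|\xi-e^{(T-t)B}x|_B)^{2+\alpha}$ for $(T,\xi)$ close to $(t,x)$. Splitting the expectation at $\{|X_T-x|<\delta\}$ vs.\ $\{|X_T-x|\geq\delta\}$ and integrating termwise: the polynomial part divided by $T-t$ converges to $\Lc f(t,x)$ uniformly on $H$ by \eqref{ae51c}--\eqref{ae51d}; the tail on $\{|X_T-x|\geq\delta\}$ is $o(T-t)$ uniformly in $x\in\R^d$ via \eqref{ae51b} applied with $H=K_f$; the remainder on the local event is handled through the bound $|X_T-e^{(T-t)B}x|_B\leq C(\delta+o(1))$ there combined with \eqref{ae51e}, which together give an error of order $\delta^{\alpha}$ that can be made arbitrarily small. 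For \eqref{ae62}, setting $F(T,x)=\TT_{t,T} f(T,\cdot)(x)$, the Markov property at time $T$ yields
\begin{equation}
\frac{F(T+h,x)-F(T,x)}{h}=E_{t,x}\!\left[\frac{\TT_{T,T+h} f(T+h,\cdot)(X_T)-f(T,X_T)}{h}\right]\!,
\end{equation}
and \eqref{ae71} at $T$ provides uniform convergence on $\R^d$ of the inner difference quotient to $\Lc f(T,\cdot)$; dominated convergence then gives the right-hand derivative $\TT_{t,T}(\Lc f(T,\cdot))(x)$. The left derivative is handled symmetrically via $F(T,x)=E_{t,x}[\TT_{T-h,T} f(T,\cdot)(X_{T-h})]$ and the backward form of \eqref{ae71} built into the two-sided definition of $\lim_{T-t\to 0^+}$.

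The main hurdle is the uniformity in $x\in\R^d$ in \eqref{ae71}: the key limits in Proposition \ref{prop:ste1} are uniform only on compact subsets of $D$, so compact support of $f$ must be combined with the global uniformity of \eqref{ae51b} to patch the compact and non-compact regimes. A secondary subtlety is that the Taylor estimate of Theorem \ref{th:main_tay} is local around each point of $]0,T_0[\times D$, so a finite covering of $H$ is implicit in the uniform remainder bound used above.
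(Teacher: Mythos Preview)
Your proof follows the same route as the paper's: reduce to $x\in H$ versus $x\notin H$, kill the latter with \eqref{ae51b}, and on $H$ apply the intrinsic Taylor expansion together with \eqref{ae51c}--\eqref{ae51e} (the paper invokes Corollary~\ref{cor_tay_int} directly, which already gives the uniform remainder bound on $\text{supp}(f)$, so no finite cover is needed). One quantitative slip worth flagging: on the Euclidean event $\{|X_T-x|<\delta\}$ you only get $|X_T-e^{(T-t)B}x|_B\le C(\delta+o(1))^{1/(2r+1)}$, not $C(\delta+o(1))$, since $|z|_B$ scales like $|z_i|^{1/(2j+1)}$ in the degenerate directions; this still yields a remainder of order $\delta^{\alpha/(2r+1)}\to 0$, so your argument survives, but the paper avoids the issue altogether by splitting the remainder integral at the \emph{intrinsic} ball $\{|\xi-e^{(T-t)B}x|_B\le\delta\}$ rather than the Euclidean one.
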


\begin{remark}\label{rem:new}
In \cite{PPTaylor} it was already proven a weaker form of Proposition \ref{la5} and Theorem \ref{cor:ste101}, which applies to a wider class of local generators $\Ac_t$. When applied to this particular framework, it yields that if 
$X$ is a $\locdiff$-local diffusion on $\R^d$, then 
\eqref{ae53}, \eqref{ae71}, \eqref{ae62} and thus also \eqref{ae41} and \eqref{eq:quad_var}, hold true for a smaller class of functions, namely $f\in C^{2}_{0}\left(]0,T_{0}[\times D\right)$.
\end{remark}

\begin{remark}\label{rem:kolm_forw_dist}
By Proposition \ref{cor:ste101} it is clear that, for any $(t,x)\in\, ]0,T_0[\times \R^d$,
$p(t,x;\cdot,\cdot)$ satisfies
\begin{equation}\label{ae44}
\Lc^*  u = 0 \qquad \text{on $]t,T_{0}[\times {D}$},
\end{equation}
in the sense of distributions, with $
\Lc^*$ being the formal adjoint of $
\Lc$, i.e.
\begin{equation}\label{ae42_bis}
  \int_{t}^{T_{0}}\int_{{D}}p(t,x;T,d\x)
  \Lc f(T,\x)d T = 0, \qquad f\in C_0^{\infty}(]t,T_0[\times D).
\end{equation}
\end{remark}

\subsection{Proofs 
}\label{subsec:proof_theorem_density}

In this section we prove Proposition \ref{prop:ste1}, Proposition \ref{la5} and Theorem \ref{cor:ste101}, respectively, the proof of each being based on the previous one.
\subsubsection{Proof of Proposition \ref{prop:ste1}}
The proof of Proposition \ref{prop:ste1} is preceded by the following
\begin{lemma}\label{lem:ste01}
Under the hypothesis {\bf[Lim-i)]}, 
for any $H\Subset D$ and for any $0\leq t < T<T_0$, $\d<\bar{\d}:=\text{\rm dist}(H,\p D)$, we
have
\begin{equation}\label{ae51_stronger}
\lim_{T-t\rightarrow 0^+}\!\!\!\!\int\limits_{\{|\x-x|\geq\d 
  \}\cap H}\frac{p(t,x;T,d\x)}{(T-t)^m}
  =0,\qquad m\geq 1,
\end{equation}
uniformly w.r.t. $x\in H$.
\end{lemma}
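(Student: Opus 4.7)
I would prove the lemma by induction on $m \ge 1$, where the base case $m=1$ is exactly the uniform bound \eqref{ae51b} in hypothesis {\bf[Lim-i)]}.

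For the inductive step, assume the statement at level $m$ and derive it at level $m+1$. The main tool is the Chapman--Kolmogorov identity at the midpoint $s := (t+T)/2$,
\begin{equation*}
    \int_{\{|\xi-x|\ge\delta\}\cap H}p(t,x;T,d\xi) = \int_{\R^d}p(t,x;s,d\eta)\int_{\{|\xi-x|\ge\delta\}\cap H}p(s,\eta;T,d\xi),
\end{equation*}
together with a case split on the intermediate variable $\eta$. Introduce the enlarged compact set $K := \{y\in\R^d:\mathrm{dist}(y,H)\le \delta/2\}$, which is compactly contained in $D$ because $\delta<\bar\delta$, and partition the outer integration into: (i) $|\eta-x|\le \delta/2$, which forces $|\xi-\eta|\ge \delta/2$ on the inner event and places $\eta\in K$; (ii) $|\eta-x|>\delta/2$ with $\eta\in K$; and (iii) $|\eta-x|>\delta/2$ with $\eta\notin K$, where $\mathrm{dist}(\eta,H)>\delta/2$ forces $|\xi-\eta|>\delta/2$ on the inner event. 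In region (i) the inductive hypothesis applied to $(K,\delta/2)$ yields the inner bound $o((T-s)^m)$ uniformly in $\eta\in K$; in region (ii) the same hypothesis gives the outer bound $o((s-t)^m)$ uniformly in $x\in H$; in region (iii), \eqref{ae51b} gives the inner bound $o(T-s)$ uniformly in $\eta\in\R^d$, and \eqref{ae51} gives the outer bound $o(s-t)$ uniformly in $x\in H$, producing a product contribution of $o((T-t)^2)$.

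A single pass through this decomposition gives only $o((T-t)^{\min(m,2)})$, which is not yet $o((T-t)^{m+1})$. To close the induction, I would iterate the Chapman--Kolmogorov decomposition inside the trivially bounded factor of regions (i) and (ii): each further iteration applies the same three-way splitting and gains one additional power of $(T-t)$ by the same mechanism that produced the $o((T-t)^2)$ bound in region (iii), so that after $m$ iterations the desired rate is reached. The main technical obstacle is that the original statement provides uniformity only over $x\in H$, whereas the decomposition requires uniformity over $\eta$ in the enlarged compact $K$ and over further nested compacts at each iteration; this is addressed by strengthening the inductive claim to hold simultaneously for the nested family $H\subset H^{(\delta/2)}\subset H^{(\delta)}\subset\cdots\Subset D$, which may be fixed at the beginning of the argument since $\delta<\bar\delta$ is strict, so that each appeal to the inductive hypothesis takes place at the appropriate level of the family.
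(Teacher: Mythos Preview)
Your inductive/Chapman--Kolmogorov strategy is genuinely different from the paper's, but there is a real gap in the iteration step. The three-way split is correct and gives
\[
F_{H,\delta}(t,x;T)\;\le\; \underbrace{\sup_{\eta\in K}F_{K,\delta/2}(s,\eta;T)}_{\text{(i)}}
\;+\;\underbrace{F_{K,\delta/2}(t,x;s)}_{\text{(ii)}}
\;+\;\underbrace{o((T-t)^2)}_{\text{(iii)}},
\]
but the claim that ``each further iteration gains one additional power of $(T-t)$'' is not justified. Re-applying the same decomposition to the (i)- and (ii)-terms reproduces the \emph{same} structure: two self-similar terms plus an $o(h^2)$ remainder from the new region (iii'). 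Unfolding $n$ times yields $2^n$ terms of type $F_{K_n,\delta_n}$ over intervals of length $h/2^n$, plus a geometric sum of $o(h^2)$ contributions. Using only the base case $o(h)$ on the leaves gives $2^n\cdot o(h/2^n)=o(h)$; using the level-$m$ bound gives $2^n\cdot o((h/2^n)^m)=o(h^m)$. Either way you recover at best $o(h^{\max(m,2)})$, not $o(h^{m+1})$. The bottleneck is region (iii): once $\eta\notin K$ you can only invoke \eqref{ae51b} for the inner factor (uniformly in $\eta\in\R^d$) and \eqref{ae51} for the outer factor (uniformly in $x\in H$), each giving exactly $o(h)$ and no better. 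There is no mechanism in {\bf[Lim-i)]} alone to upgrade either factor, and nesting compacts does not help because $\eta$ may lie outside \emph{every} compact of $D$.

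The paper's argument bypasses Chapman--Kolmogorov entirely. It chooses a family $\phi^{(x)}\in C_0^\infty(D)$ with $\phi^{(x)}(x)=0$ and $\phi^{(x)}\equiv 1$ on $H\setminus B(x,\delta)$, so that the quantity to be bounded is $\le E_{t,x}\big[\phi^{(x)}(X_T)^{2(m+1)}\big]$. Then the (classical) It\^o formula for $C^2_0$ functions (Remark~\ref{rem:new}) gives
\[
\phi^{(x)}(X_T)=\int_t^T\Ac_s\phi^{(x)}(X_s)\,ds + M^t_T,
\]
and moment bounds (the drift integral is $O(T-t)$ pointwise; a BDG-type estimate gives $E_{t,x}[|M^t_T|^{2(m+1)}]\le C\,E_{t,x}[|M^t_T|^2]^{m+1}=O((T-t)^{m+1})$) yield the desired $O((T-t)^{m+1})$ directly for every $m$. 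Note that this route actually uses {\bf[Lim-ii)]} as well, through the It\^o formula; the hypothesis {\bf[Lim-i)]} alone, which is all your argument invokes, appears insufficient to reach $m\ge 3$.
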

\begin{proof}
Let $
\phi^{(x)} \in C_{0}^{\infty}( D)$ be a family of functions with partial derivatives uniformly bounded w.r.t. $x\in H$, and such that $\phi^{(x)}(x)=0$ and
\begin{equation}\label{eq:ste106_quat}
\phi=\phi^{(x)} = 1\quad \text{on }H\setminus B(x,\d).
\end{equation}
Note that Remark \ref{rem:new} gives
\begin{equation}
\phi(X_T)  =\phi(X_{t})+ \int_{t}^{T}  \Acc_{s}  \phi(X_{s})ds + M^t_T,
\end{equation}
where $M^t$ is an $\F^{t}$-martingale under $P_{t,x}$ with 
\begin{equation}\label{eq:quad_var_ter}
E_{t,x}\big[ |M^t_T|^2 \big] = E_{t,x}\bigg[ \int_t^T   \sum_{k,l=1}^{p_0}  
 a_{kl}(s,X_s)  \big(\partial_{x_k} \phi(X_s)\big)\big(\partial_{x_l} \phi(X_s)\big)  \dd s\bigg].
\end{equation}
Thus we obtain
\begin{align}
\int\limits_{\{|\x-x|\geq\d 
  \}\cap H}{p(t,x;T,d\x)} & \leq E_{t,x} \big[ \big( \phi(X_T) \big)^{2(m+1)}  \big]  \leq 2^{2m+1}
 E_{t,x} \bigg[ \bigg| \int_{t}^{T}  \Acc_{s}  \phi(X_{s})ds \bigg|^{2(m+1)} + | M^t_T |^{2(m+1)}  \bigg]\\
& \leq 2^{2m+1} \bigg(
 E_{t,x} \bigg[ \bigg| \int_{t}^{T}  \Acc_{s}  \phi(X_{s})ds \bigg|^{2(m+1)} \bigg]+ \bigg( \frac{2(m+1)}{2m + 1} \bigg)^{2(m+1)} E_{t,x}\big[ 
 |M^t_T|^2   \big]^{m+1}     \bigg),
\end{align}
where we used \cite[Lemma 3.8, p. 71]{FriedmanSDE1} and Jensen's inequality in the last step. 
Eventually, \eqref{ae51_stronger} stems from \eqref{eq:quad_var_ter} combined with the fact that $\phi\in C_{0}^{\infty}( D)$ and the coefficients of $\Ac$ are in $L^{\infty}_{\rm loc}([0,T_0[\times D)$. 
\end{proof}

We are now in the position to prove Propostion \ref{prop:ste1}.

\begin{proof}[Proof of Proposition \ref{prop:ste1}]${}$

\vspace{3pt}\noindent
\underline{Part 1}: \emph{if}.
We assume {\bf[Lim-i)]} to be satisfied together with the limits \eqref{ae51c}-\eqref{ae51d}-\eqref{ae51e}, and prove that {\bf[Lim-ii)]} holds. Let $t\in[0,T_{0}[$, $\d>0$, 
and $H$ compact subset of $D$ be fixed. All the following limits are uniform w.r.t. $x\in H$.

We first prove \eqref{ae51c_bis}.
For any $i=1,...,d$, it holds that : 
\begin{align}
\lim_{T-t\rightarrow 0^+}\!\!\!\!\int\limits_{|x-\x|<\delta}\!\!\!\!(\x-x)_{i}\frac{p(t,x;T,d\x)}{T-t}&=\lim_{T-t\rightarrow 0^+}\!\!\!\!\int\limits_{|x-\x|<\delta}\!\!\!\! \big(\x -e^{(T-t)B}x +e^{(T-t)B}x -x \big)_{i}\frac{p(t,x;T,d\x)}{T-t}\\
&=\lim_{T-t\rightarrow 0^+}\!\!\!\!\int\limits_{|x-\x|<\delta}\!\!\!\!\big(\x-e^{(T-t)B}x\big)_{i}\frac{p(t,x;T,d\x)}{T-t}+(Bx)_i.\label{eq:ste301}
\end{align}
Here we used the property
\begin{equation}\label{ae73}
\lim_{T-t\rightarrow 0^+}\int\limits_{|x-\x|<\delta}\!\!\!\!p(t,x;T,d\x)=1,
\end{equation}
which follows from 
{\bf[Lim-i)]}. Now, for $i=1,\cdots,p_0$, \eqref{ae51c_bis} stems from \eqref{ae51c} and \eqref{eq:ste301}. On the other hand, for $i=p_0+1,\cdots,d$, note that condition \eqref{ae51e} is equivalent to
\begin{equation}\label{equiv}
\limsup_{T-t\rightarrow 0^+}\!\!\!\!\int\limits_{|\x-x|<\d 
  } 
\big|  \big(\x-e^{(T-t)B}x\big)_i\big|^{\frac{2}{q_i}}\frac{p(t,x;T,d\x)}{T-t}
  <\infty
\end{equation}
for a certain $q_i\geq 3$. Therefore, fixing $\varepsilon>0$, for any $\rho\in\, ]0,\delta]$ 
we obtain
\begin{align}\label{delta}
&\limsup_{T-t\rightarrow 0^+}\int\limits_{|\x-x|<\d}\big|  \big(\x-e^{(T-t)B}x\big)_i\big|^{\frac{2}{q_i}+\varepsilon}\frac{p(t,x;T,d\x)}{T-t}\\
&= \limsup_{T-t\rightarrow 0^+}\int\limits_{|\x-x|<\rho}\big|  \big(\x-e^{(T-t)B}x\big)_i\big|^{\frac{2}{q_i}+\varepsilon}\frac{p(t,x;T,d\x)}{T-t} + \int\limits_{\rho\leq |\x-x|<\d}\big|  \big(\x-e^{(T-t)B}x\big)_i\big|^{\frac{2}{q_i}+\varepsilon}\frac{p(t,x;T,d\x)}{T-t} \\
&\leq \limsup_{T-t\rightarrow 0^+}\int\limits_{|\x-x|<\rho}\big|  \big(\x-e^{(T-t)B}x\big)_i\big|^{\frac{2}{q_i}+\varepsilon}\frac{p(t,x;T,d\x)}{T-t} +\big( \d + |x| \big|I_d-e^{(T-t)B}\big|\big)^{\frac{2}{q_i}+\varepsilon} \int\limits_{\rho\leq |\x-x|<\d} \frac{p(t,x;T,d\x)}{T-t}
\intertext{(by \eqref{ae51})}
&= \limsup_{T-t\rightarrow 0^+}\int\limits_{|\x-x|<\rho}\big|  \big(\x-e^{(T-t)B}x\big)_i\big|^{\frac{2}{q_i}+\varepsilon}\frac{p(t,x;T,d\x)}{T-t}\\
&\leq\limsup_{T-t\rightarrow 0^+}\,\big( \rho + |x| \big|I_d-e^{(T-t)B}\big|\big)^{\varepsilon}\int\limits_{|\x-x|<\rho}\big|  \big(\x-e^{(T-t)B}x\big)_i\big|^{\frac{2}{q_i}}\frac{p(t,x;T,d\x)}{T-t}\\
&\leq \rho^{\varepsilon} \limsup_{T-t\rightarrow 0^+}\int\limits_{|\x-x|<\d}\big|  \big(\x-e^{(T-t)B}x\big)_i\big|^{\frac{2}{q_i}}\frac{p(t,x;T,d\x)}{T-t},
\end{align}
which in turn implies
\begin{equation}\label{vanishing}
\limsup_{T-t\rightarrow 0^+}\!\!\!\!\int\limits_{|\x-x|<\d}\big|  \big(\x-e^{(T-t)B}x\big)_i\big|^{\frac{2}{q_i}+\varepsilon}\frac{p(t,x;T,d\x)}{T-t}=0,
\end{equation}
and thus, as $\eps$ is arbitrary,
\begin{equation}
\lim_{T-t\rightarrow 0^+}\!\!\!\!\int\limits_{|x-\x|<\delta}\!\!\!\!(\x-e^{(T-t)B}x)_{i}\frac{p(t,x;T,d\x)}{T-t}=0.
\end{equation}
The latter, combined with \eqref{eq:ste301}, proves \eqref{ae51c_bis} for $i=p_0+1,\cdots,d$.

We now prove \eqref{ae51d_bis}. By using \eqref{ae51c_bis} it is straightforward to show that
\begin{equation}\label{eq:ste303}
\lim_{T-t\rightarrow 0^+}\!\!\!\!\int\limits_{|\x-x|<\d}
\!\!\!\!\big(\x-x\big)_{i}\big(\x-x\big)_{j}\frac{p(t,x;T,d\x)}{T-t} =\lim_{T-t\rightarrow 0^+}\!\!\!\! \int\limits_{|\x-x|<\d}
\!\!\!\!\big(\x-e^{(T-t)B}x\big)_{i}\big(\x-e^{(T-t)B}x\big)_{j}\frac{p(t,x;T,d\x)}{T-t}.
\end{equation}
Now, for $i,j=1,...,p_0,$ \eqref{ae51d_bis} simply stems from \eqref{ae51d}.
In the case $i>p_0$ or $j>p_0$, 
by the Cauchy-Schwarz inequality we get
\begin{align}
\int\limits_{|\x-x|<\d}
\!\!\!\!\big| \big(\x-e^{(T-t)B}x\big)_{i}\big(\x-e^{(T-t)B}x\big)_{j}\big| \frac{p(t,x;T,d\x)}{T-t}&\leq
\Bigg(\,\int\limits_{|\x-x|<\d}
\!\!\!\!\big(\x-e^{(T-t)B}x\big)^2_{i}\frac{p(t,x;T,d\x)}{T-t}\Bigg)^{\frac{1}{2}}\\
&\quad \times\Bigg(\,\int\limits_{|\x-x|<\d}
\!\!\!\!\big(\x-e^{(T-t)B}x\big)^2_{j}\frac{p(t,x;T,d\x)}{T-t}\Bigg)^{\frac{1}{2}},
\end{align}
and the limits of the right-hand side integrals are both finite, at lest one of each being zero. In fact, by \eqref{ae51d} and \eqref{vanishing} we obtain
\begin{equation}
\int\limits_{|\x-x|<\d}
\!\!\!\!\big(\x-e^{(T-t)B}x\big)^2_{k}\frac{p(t,x;T,d\x)}{T-t} =\begin{cases}
a_{kk}(t,x) &  \text{if } k=1,\cdots,p_0 \\
0    & \text{if } k=p_0+1, \cdots, d.
  \end{cases}
\end{equation}
Therefore, we can conclude that for $i>p_0$ or $j>p_0$
\begin{equation*}
\lim_{T-t\rightarrow 0^+}\!\!\!\!\int\limits_{|\x-x|<\d}
\!\!\!\!\big(\x-e^{(T-t)B}x\big)_{i}\big(\x-e^{(T-t)B}x\big)_{j}\frac{p(t,x;T,d\x)}{T-t}=0,
\end{equation*}
which, combined with \eqref{eq:ste303}, yields \eqref{ae51d_bis}.

\vspace{3pt}\noindent
\underline{Part 2}: \emph{only if}. We now assume 
{\bf[Lim-i)]}-{\bf[Lim-ii)]} and prove the limits \eqref{ae51c}-\eqref{ae51d}-\eqref{ae51e} to be true. 
Fix $t\in[0,T_{0}[$, $\d>0$, 
and $H$ compact subset of $D$. Again, all the following
limits are uniform w.r.t. $x\in H$.

The limits \eqref{ae51c} and \eqref{ae51d} stem again from \eqref{eq:ste301}-\eqref{eq:ste303} with $i,j=1,\cdots,p_0$. Thus to conclude we only need to prove \eqref{ae51e}.
We remark that, by \eqref{ae51}, 
it is not restrictive to assume $\d<\text{\rm dist}(H,\p D)$. By \eqref{ae51d} and by Jensen's inequality, it is sufficient to prove
\begin{equation}\label{eq:newlimit}
\limsup_{T-t\rightarrow 0^+}\frac{1}{T-t} E_{t,x}\Big[\big|\big(X_T-e^{(T-t)B}x\big)_j\big|
\caratt_{\{|X_T -x| <\delta\}}   \Big]^{\frac{2}{2i+1}} <\infty,
\end{equation}
for any fixed $i\in\{ 1,\cdots,r\}$ and 
$j\in\{\sum_{k=0}^{i-1}p_k+1,\cdots,d\}$. We prove \eqref{eq:newlimit} in two different steps:

\noindent\emph{Step 1.} We prove that
\begin{align}
\limsup_{T-t\to 0^+}\frac{1}{T-t}E_{t,x}\Big[\big|&\big(X_T-e^{(T-t)B}x\big)_j\big|
\caratt_{\{|X_T -x| <\delta\}}   \Big]^{\frac{2}{2i+1}}\\
& \leq  \limsup_{T-t\to 0^+}\frac{1}{T-t} \bigg(   \int_t^{T}  E_{t,x}\Big[ \big| \big\langle B^{(j)}, X_s - e^{(s-t)B} x\big\rangle\big|  \caratt_{\{|X_s -x| <\delta\}}  \Big]   
\dd s   \bigg)^{\frac{2}{2i+1}}  ,
\label{eq:ste107}
\end{align}
for any $i\geq 1$ and $j=\{{p}_0+1,\cdots,d\}$. Here and further on, $B^{(j)}$ denotes the $j$-th row of $B$. 

Let $
\phi^{(t,x)}_j \in C_{0}^{\infty}([t,T_0[\times D)$, $j={p}_0+1,\cdots,d$, be a family of
functions with partial derivatives uniformly bounded w.r.t. $(t,x)\in [0,T_0[\times H$ and such
that
\begin{equation}\label{eq:ste106_bis}
\phi_j(s,\xi)=\phi^{(t,x)}_j(s,\x) = \x_j  -\big(e^{(s-t)B}x\big)_j,\qquad 
|\x - x|<\d.
\end{equation}
Note that we have
\begin{equation}\label{eq:ste106}
\begin{cases}
\partial_s \phi_j(s,\x) = - \big\langle B^{(j)}, e^{(s-t)B}x\big\rangle\\
\partial_{\xi_j}\phi_j(s,\x) = 1\\
\partial_{\xi_k}\phi_j(s,\x) = 0  \quad \text{for } k\neq j\\
\partial_{\xi_k \xi_l}\phi_j(s,\x)= 0
\end{cases},\qquad s\in [t,T_0[,\ |\xi - x|<\d. 
\end{equation}
By Remark \ref{rem:new} we have
%
\begin{align}
\hspace{-5pt} \phi_j(T,X_T) & =\phi_j(t,X_{t})+ \int_{t}^{T} (\partial_s + \Acc_{s}) \phi_j(s,X_{s})ds + M^t_T
\intertext{(by \eqref{eq:ste106_bis}-\eqref{eq:ste106})}
\hspace{-10pt}&\hspace{-10pt} =
\phi_j(t,X_t)+ \int_{t}^{T}\!\! \big\langle B^{(j)}, X_s -e^{(s-t)B}x \big\rangle \caratt_{\{|X_s -x|<\delta
\}} ds + \int_{t}^{T} (\partial_s+\Acc_{s}) \phi_j(s,X_{s})\caratt_{\{|X_s -x|\geq\delta
\}}  ds + M^t_T,\hspace{-5pt}
\end{align}
where $M^t$ is an $\F^{t}$-martingale under $P_{t,x}$ with 
\begin{equation}\label{eq:quad_var_bis}
E_{t,x}\big[ |M^t_T|^2 \big]= E_{t,x}\bigg[ \int_t^T   \sum_{k,l=1}^{p_0}  
 a_{kl}(s,X_s)  \big(\partial_{x_k} \phi_j(s,X_s)\big)\big(\partial_{x_l} \phi_j(s,X_s)\big)  \dd s\bigg].
\end{equation}
Therefore, for any $i\geq 1$ and $j={p}_0+1,\cdots,d$ one obtains
\begin{equation}
E_{t,x}\Big[\big|\big(X_T-e^{(T-t)B}x\big)_j\big| \caratt_{\{|X_T -x| <\delta\}}   \Big]^{\frac{2}{2i+1}}  \leq 
E_{t,x}\big[\big|\phi_j(T,X_T) \big|\big]^{\frac{2}{2i+1}}\leq  
\sum_{k=1}^3 I^{\frac{2}{2i+1}}_k(t,x;T) ,
\end{equation}
where
\begin{align}
I_1(t,x;T) &= E_{t,x}\bigg[\bigg|   \int_{t}^{T} (\partial_s + \Acc_{s})
\phi_j(s,X_{s})\caratt_{\{|X_s -x|\geq\delta\}}  ds  \bigg|\bigg], \qquad I_2(t,x;T) =
E_{t,x}\big[\big|M^t_T  \big|\big], \\ I_3(t,x;T) &= E_{t,x}\bigg[\bigg|  \int_t^{T} \big\langle
B^{(j)}, X_s -e^{(s-t)B}x \big\rangle \caratt_{\{|X_s -x|<\delta\}}   \dd s  \bigg|\bigg] ,
\end{align}
where we used that $\phi_j(t,X_t) = 0$ $P_{t,x}$-almost surely.
Since 
the coefficients of $\Ac$ are locally bounded on $[0,T_0[\times D$ and $\phi_j\in C_0^{\infty}([t,T_0[\times D)$, we obtain
\begin{align}
I_1(t,x;T) &   \leq  
C  \int_t^{T} E_{t,x}\big[  \caratt_{\{|X_s -x|\geq\delta\}}\caratt_{\{X_s\in \text{supp} (\phi_j)\}} \big]  \dd s  =  C  \int_t^{T} (s-t)^{\frac{2i+1}{2}}\!\!\!\! \int\limits_{(|\x -x|\geq\delta) \cap \text{supp}(\phi_j) 
  }\frac{p(t,x;s,d\z)}{(s-t)^{\frac{2i+1}{2}}}  \dd s
  \intertext{(by \eqref{ae51_stronger} with $m=\frac{2i+1}{2}$)}
& \leq  C   \int_t^{T} (s-t)^{\frac{2i+1}{2}}   \dd s \leq C (T-t)^{\frac{2i+1}{2}+1},
\end{align}
for any $i\geq 1$, 
which proves
\begin{equation}
\lim_{T-t\to 0^+}\frac{1}{T-t}  I^{\frac{2}{2i+1}}_1(t,x;T) = 0,\qquad i\geq 1.
\end{equation}
Similarly, Jensen's inequality 
yields 
\begin{align}
I_2(t,x;T)& \leq E_{t,x}\big[|M^t_T  |^2\big]^{\frac{1}{2}} 
\intertext{(by \eqref{eq:quad_var_bis} combined with \eqref{eq:ste106})}
&=E_{t,x} \bigg[
 \int_t^T  \caratt_{\{|X_s -x|\geq\delta\}} \sum_{k,l=1}^{p_0}  
 a_{kl}(s,X_s)  \big(\partial_{x_k} \phi_j(s,X_s)\big)\big(\partial_{x_l} \phi_j(s,X_s)\big)
\dd s \bigg]^{\frac{1}{2}} \intertext{(by using again the local boundedness of $a_{kl}$ on
$[0,T_0[\times D$ and $\phi_j\in C_0^{\infty}([0,T_0[\times D)$)} &\leq C E_{t,x}\bigg[ \int_t^T
\caratt_{\{|X_s -x|\geq\delta\}}\caratt_{\{X_s\in \text{supp} (\phi_j)\}}  \dd s
\bigg]^{\frac{1}{2}},
\end{align}
and by proceeding as we did to estimate $I_1(t,x;T)$ it easily follows that
\begin{equation}
\lim_{T-t\to 0^+}\frac{1}{T-t}  I^{\frac{2}{2i+1}}_2(t,x;T) = 0,\qquad i\geq 1.
\end{equation}
Finally, 
\begin{equation}
\limsup_{T-t\to 0^+}\frac{1}{T-t}  I^{\frac{2}{2i+1}}_3(t,x;T) \leq  \limsup_{T-t\to 0^+}\frac{1}{T-t} \bigg(   \int_t^{T}  E_{t,x}\Big[ \big| \big\langle B^{(j)}, X_s - e^{(s-t)B} x\big\rangle\big|\caratt_{\{|X_s -x| <\delta\}}  \Big]    
\dd s   \bigg)^{\frac{2}{2i+1}}  
\end{equation}
yields \eqref{eq:ste107} for any $i\geq 1$.

\noindent\emph{Step 2.} We prove \eqref{eq:newlimit} by induction on $i$. To start the inductive
procedure, set $i=0$ and prove \eqref{eq:newlimit} for any $j\in\{ 1,\cdots,d  \}$. If
$j\in\{1,\dots,\bar{p}_0\}$, then \eqref{eq:newlimit} stems from $\eqref{ae51d}$ by applying
Jensen's inequality. If $j\in\{\bar{p}_0+1,\cdots,d\}$, then \eqref{eq:newlimit} follows trivially
from \eqref{eq:ste107} by observing that $$ \big| \big\langle B^{(j)}, X_s - e^{(s-t)B}
x\big\rangle\big| \caratt_{\{|X_s -x| <\delta\}} $$ is uniformly bounded.

Set now $\bar{i}\in \{0,\cdots,r-1\}$, assume \eqref{eq:newlimit} true for $i=\bar{i}$ and $j\in\{
\sum_{k=0}^{\bar{i}-1} p_k+1,\cdots, d\}$ and prove it true for $i=\bar{i}+1$ and $j\in\{
\sum_{k=0}^{\bar{i}} p_k+1,\cdots, d\}$. Now, by the block structure of $B$ \eqref{eq:B_blocks},
we obtain
\begin{align}
\int_t^T E_{t,x}\Big[ \big| \big\langle B^{(j)}, X_s - e^{(s-t)B} x\big\rangle\big|
\caratt_{\{|X_s -x| <\delta\}} \Big] \dd s&\leq C \sum_{k=\sum_{k=0}^{\bar{i}-1} p_k+1}^d \int_t^T
E_{t,x}\Big[ \big|\big( X_s - e^{(s-t)B} x\big)_k \big|  \caratt_{\{|X_s -x| <\delta\}} \Big]
\dd s \intertext{(by inductive hypothesis)} &\leq C \int_t^T (s-t)^{\frac{2\bar{i}+1}{2}}  \dd s
\leq C (T-t)^{\frac{2(\bar{i}+1)+1}{2}},
\end{align}
which, combined with \eqref{eq:ste107}, yields exactly \eqref{eq:newlimit} for $i=\bar{i}+1$ and concludes the proof.

\end{proof}

\subsubsection{Proof of Proposition \ref{la5}}

\proof[Proof of Proposition \ref{la5}] The proof of \eqref{ae53} is identical to the proof of
\cite[Eq. (2.6)]{PPTaylor} as it is only based on the limits \eqref{ae51b} and
\eqref{ae51}. The same goes for \eqref{ae62}, which is a corollary of \eqref{ae53}-\eqref{ae71} and whose proof coincides with the proof of \cite[Eq. (2.9)]{PPTaylor}.

Therefore, we only need to prove \eqref{ae71}. Set $f\in C_{0,B}^{2,\alpha}\left(]0,T_0[\times D\right)$ whose support is contained in the interior of
$]0,T_0[\times H$, with $H\subset D$ a compact subset. 
We have
  $$\frac{\TT_{t,T}f(T,x)-f(t,x)}{T-t}=I_{t,T,1}(x)+I_{t,T,2}(x)$$
where
\begin{align}\label{ae74}
 I_{t,T,1}(x)= \int_{H}p(t,x;T,d\x)\frac{f(T,\x)-f(t,x)}{T-t},\qquad
 I_{t,T,2}(x)=-\frac{f(t,x)}{T-t}\int_{\R^d\setminus H}p(t,x;T,d\x),
\end{align}
First note that, by \eqref{ae51b}, if $x\not\in H$ it holds that
\begin{equation}
I_{t,T,1}(x) + I_{t,T,2}(x) = I_{t,T,1}(x) \longrightarrow 0 \quad \text{as }T-t\to 0^+, \qquad \text{unif. w.r.t. } x\in\mathbb{R}^d\setminus H.
\end{equation}
We now consider the case $x\in H$. By \eqref{ae51} the term
$I_{t,T,2}(x)$ is negligible in the limit. As for $I_{t,T,1}(x)$, 
the intrinsic Taylor formula of Corollary \ref{cor_tay_int} yields
\begin{equation}\label{ae63}
\begin{split}
   f(T,\x)-f(t,x)=&(T-t) Y f(t,x)+\sum_{i=1}^{p_0}(\x-e^{(T-t)B}x)_{i}\p_{x_{i}}f(t,x)\\
  &+\frac{1}{2}\sum_{i,j=1}^{p_0}(\x-e^{(T-t)B}x)_{i}(\x-e^{(T-t)B}x)_{j}\p_{x_{i},x_j}f(t,x) + R(t,x;T,\x).
\end{split}
\end{equation}
with $R$ 
such that
\begin{equation}\label{eq_ste101}
\left|R(t,x;T,\x)\right|\le c_{H,B} \Big( |T-t|^{1/2}+\big|\x-e^{(T-t)B}x\big|_B\Big)^{2+\a},
\qquad (t,x),(T,\xi)\in\, ]0,T_0[\times H.
\end{equation}
Next we prove that
\begin{equation}\label{eqste_102}
  \lim_{T-t\to 0^+}\int_{H}
    \big|\x-e^{(T-t)B}x\big|_B^{2+\a}\frac{p(t,x;T,d\x)}{T-t} = 0,\qquad \text{unif. w.r.t. }x\in H.
\end{equation}
For any $x\in H$ and $\d>0$ suitably small we have
\begin{align}
 \int_{H}
  \big|\x-e^{(T-t)B}x\big|_B^{2+\a}\frac{p(t,x;T,d\x)}{T-t} & \leq  C \int_{H\setminus D_{\d}(t,x,T)}
 \frac{p(t,x;T,d\x)}{T-t}\\ &\quad + 
 \d^{\alpha} \int_{D_{\d}(t,x,T)} 
 \big|\x-e^{(T-t)B}x\big|_B^{2} \frac{p(t,x;T,d\x)}{T-t},
\end{align}
where $D_{\d}(t,x,T)=\{\x\in\R^{d}\mid 
 \big|\x-e^{(T-t)B}x\big|_B\le \d\}$ and $C$ is a positive
constant. By \eqref{ae51} 
and \eqref{ae51e} we obtain
\begin{equation}
\limsup_{T-t\to 0^+}\int_{H}
 \big|\x-e^{(T-t)B}x\big|_B^{2+\a}\frac{p(t,x;T,d\x)}{T-t}\leq C_{1}\delta^{\alpha} ,\qquad \text{unif. w.r.t. }x\in H.
\end{equation}
This proves \eqref{eqste_102} since $
\d$ is arbitrary.

Eventually, \eqref{ae71} follows by plugging \eqref{ae63} into \eqref{ae74} and passing to the
limit using \eqref{ae73}, \eqref{ae51c}, \eqref{ae51d} and \eqref{eq_ste101}-\eqref{eqste_102}.
This concludes the proof.
\endproof

\subsubsection{Proof of Theorem \ref{cor:ste101}}
We are now ready to prove the \emph{intrinsic} It\^o formula of Theorem \ref{cor:ste101}.

\begin{proof}[Proof of Theorem \ref{cor:ste101}]
First observe that, integrating \eqref{ae62}, we get the identity
\begin{equation}\label{ae68}
 \left(\TT_{t,T}f(T,\cdot)\right)(x)-f(t,x)=\int_{t}^{T}\TT_{t,\t}\left(
 \Lc f(\t,\cdot)\right)(x)d\t, \qquad T\in\, ]t,T_0[.
\end{equation}
Note that the integrand in \eqref{ae68} is bounded, as a function of $\tau$, because of Assumption \ref{assum1and} and since $f\in C^{2,\alpha}_{0,B}\left(]0,T_{0}[\times D\right)$ and $\TT_{t,\tau}$ is a contraction. {Consider now the process $M^t$ defined through \eqref{ae41}.} For $\t\in[t,T]$ we have
\begin{align}
  E_{t,x}\left[M^{t}_{T}\mid\F^{t}_{\t}\right]&=M^{t}_{\t}+E_{t,x}\left[f(T,X_{T})-f(\t,X_{\t})-\int_{\t}^{T}
  \Lc f(u,X_{u})du\mid\F^{t}_{\t}\right]
=M^{t}_{\t}+{\Phi(\t,X_{\t})}
\end{align}
where, by the Markov property,
\begin{align}
{\Phi(\tau,x)}&=E_{\t,x}\left[f(T,X_{T})-f(\t,x)-\int_{\t}^{T}
\Lc f(u,X_{u})du\right]
\intertext{(by Fubini's theorem)}
  &=\left(\TT_{\t,T}f(T,\cdot)\right)(x)-f(\t,x)-\int_{\t}^{T}\TT_{\t,u}\left(
  \Lc f(u,\cdot)\right)(x)du
\end{align}
which is $0$ by \eqref{ae68}.


To conclude we need to show that
\begin{equation}
Y^t_T := (M^t_T)^2 - \int_t^T  \sum_{i,j=1}^{p_0}    a_{ij}(s,X_s)\partial_{x_i} f(s,X_s)\partial_{x_j} f(s,X_s) \dd s,
\end{equation}
has null $P_{t,x}$-expectation. First note that
\begin{equation}
 \sum_{i,j=1}^{p_0}    a_{ij}(s,X_s)\partial_{x_i} f(s,X_s)\partial_{x_j} f(s,X_s)= \Lc f^2(s,X_{s}) - 2 f(s,X_{s}) \Lc f(s,X_{s}),
\end{equation}
which implies
\begin{equation}
Y^t_T = Y^{1,t}_T +Y^{2,t}_T +Y^{3,t}_T,
\end{equation}
with
\begin{align}
Y^{1,t}_T & =   f^2(T,X_{T})  -\int_{t}^{T}
\Lc f^2(u,X_{u})du + f^2(t,X_{t}) ,\\
Y^{2,t}_T & = 2  f(t,X_{t}) \bigg( \int_{t}^{T}
\Lc f(u,X_{u})du  -  f(T,X_{T})  \bigg)   ,\\
Y^{3,t}_T & = - 2 \int_{t}^T \big(    f(T,X_{T})  -f(u,X_{u})    \big)  
\Lc f(u,X_{u})du  +\bigg( \int_{t}^{T}
\Lc f(u,X_{u})du  \bigg)^2     .
\end{align}
Now, by applying the first part of Theorem \ref{cor:ste101} to $f^2$ and $f$ respectively, it is clear that $Y^{1,t}$ and $Y^{2,t}$ are martingales with null $P_{t,x}$-expectation. Finally, the identity
$$
\bigg( \int_{t}^{T}
\Lc f(u,X_{u})du  \bigg)^2 = 2  \int_{t}^{T} \bigg( \int_u^T 
\Lc f(s,X_{s}) d s \bigg) 
\Lc f(u,X_{u}) d u
$$
along with \eqref{ae41} yields
\begin{align}
Y^{3,t}_T &=  - 2 \int_{t}^T \big(    M^t_T  -M^t_u    \big)  
\Lc f(u,X_{u})du,
\end{align}
which 
shows that $Y^{3,t}$ has null $P_{t,x}$-expectation and thus concludes the proof.

\end{proof}

\section{{Local densities}
}\label{sec:model_proofs}




This section is devoted to the proof of Theorem \ref{th:main}. We will adapt and customize a localization procedure first introduced in \cite{kusuoka-stroock}. 
From now on, throughout the rest of this section, we will assume that the coefficients $a_{ij},a_i$ satisfy Assumption \ref{assum1and} for some $N$, $\alpha$, $M$.

\subsection{Proof of Theorem \ref{th:main}, Part a),b)}\label{sec:proof_main}

We start by observing that the nature of Theorem \ref{th:main} a),b) is strictly local for what
concerns the existence of the transition density and its regularity w.r.t. the forward
space-variable. In other words, it is enough to prove it for $\G(t,x;T,\xi)$ defined for any $0<
t<T<T_0$ and $x\in\R^d$, $\xi \in D'$ and for any sub-domain $D' \Subset D$. 
Therefore, it is not restrictive to assume that there exists a family
$\ta_{ij}  ,\ta_{i}:]0,T_0[\times \R^d\to \R$, $i,j=1,\cdots,p_0$, such that $(\ta_{ij} ,\ta_{i})$
coincide with $(a_{ij}  ,a_{i})$ on $]0,T_0[\times D$ and:
\begin{itemize}
 \item[($\tilde{\text{i}}$)] $\ta_{ij}  ,\ta_{i}  \in C^{N,\alpha}_B(]0,T_0[\times \R^d)$ for any $i,j=1,\dots,p_0$, with all the (Lie) derivatives bounded by $M$;
\item[($\tilde{\text{ii}}$)] the following coercivity condition holds on $\R^d$
\begin{equation}\label{cond:ell-loc_glob}
 M^{-1} |\x|^2 \leq \sum_{i,j=1}^{p_0}
 \ta_{ij}(t,x)\x_{i}\x_{j}\leq M  |\x|^2 ,\qquad t\in\left]0,T_{0}\right[,\ x\in \R^d,\ \x\in\mathbb{R}^{p_0}.
\end{equation}
\end{itemize}
Let us denote by $\tAc_t$ the operator defined as
\begin{align}\label{ae75}
 {\tAc}_{t}:= \frac{1}{2}\sum_{i,j=1}^{p_0}\ta_{ij}(t,x)\p_{x_{i}x_{j}}+\sum_{i=1}^{p_0}\ta_{i}(t,x)\p_{x_{i}} + \langle B x, \nabla_x \rangle  \qquad t\in [0,T_{0}[,\ x\in \R^d,
\end{align}
where $B$ is as in \eqref{eq:B_blocks}. We consider an auxiliary $\R^d$-valued {continuous} strong Markov
process $\tX=(\tX_{t})_{t\in[0,T_0[}$ defined on a space
$\big(\tO,\tF,\big(\tF_{T}^{t}\big)_{0\le t\le T< T_{0}},\big(\tP_{t,x}\big)_{0\le t<
T_{0},x\in\R^d}\big),$
with transition probability function $\tp=\tp(t,x;T,d\x)$, such that $\tX$ is an
\emph{$\tAc_t$-global diffusion on $\R^d$} in the sense of Definition \ref{def:local_diffusion}; in
Appendix \ref{sec:app_markov} we briefly recall the standard construction of such $\tX$.
In particular, it will result in $\tp(t,x;T,d\x)$ having a density $\tG(t,x;T,\xi)$, which coincides with the fundamental solution of the operator
\begin{equation}\label{eq: eq:operator_L_tide}
\tilde{\Lc} = \tAc_t + \partial_t = \frac{1}{2}\sum_{i,j=1}^{p_0}\ta_{ij}(t,x)\p_{x_{i}x_{j}}+\sum_{i=1}^{p_0}\ta_{i}(t,x)\p_{x_{i}} + Y,\qquad t\in [0,T_{0}[,\ x\in \mathbb{R}^d.
\end{equation}
In Lemma \ref{lem:sol_fond_tilde} in Appendix \ref{app:pdes}, some previous results are reported
about the existence of $\tG$, its regularity and some sharp Gaussian upper bounds for $\tG$ and its
derivatives.


\begin{notation}\label{not:cylinders}
For any $x\in\R^d$, $t<T$ and $\eps \in\, ]0,1[$, we set the cylinder
 $$H_{\eps}(t,x;T) :=\, ]t,T[\times S_{\eps}(x),\qquad S_{\eps}(x) := B_1(x-\eps e_1) \cap B_1(x+\eps e_1),$$
where $B_r(x)\subset \R^d$ is the Euclidean (open) ball with radius $r$ centered at $x$ and
$e_1=(1,0,\cdots,0)$ is the first vector of the canonical basis of $\R^d$. We define
\emph{the lateral boundary} and \emph{parabolic boundary} of $H_{\eps}(t,x;T)$, respectively, as
 $$
 \partial_{\Sigma} H_{\eps}(t,x;T) :=\, ]t,T[ \times\partial S_{\eps} (t,x),\qquad
  \partial_P H_{\eps}(t,x;T) :=  \partial_{\Sigma} H_{\eps}(t,x;T)\cup \big(\{ T \} \times \overline{S_{\eps}(x)}\big).
 $$
We also denote by $G=G(t,x;T,\xi)$ the Green function of $(\partial_t+\Ac_t)$ for
$H_{\eps}(0,x_0;T_0)$, which is defined for any $0< t<T<T_0$ and
$x,\xi\in\overline{S_{\eps}(x_0)}$ and enjoys the properties listed in Lemma \ref{lem:green}. In
the latter, we report some preliminary existence (and uniqueness) and regularity results for the
solution of the Cauchy-Dirichlet problem on $H_{\eps}(0,x_0;T_0)$ and in particular for the Green
function $G$.
\end{notation}

Roughly speaking, the following result shows that, prior to the exit time from $S_{\eps}(x_0)$,
$X$ and $\tilde{X}$ have the same law whose density coincides with the Green function $G$. The
proof is based on the crucial fact that the It\^o formula \eqref{ae41} is valid for functions that
are $C^{2}$ in the intrinsic sense, i.e. $C^{2}_{B}$, and not only for functions the are $C^{2}$ in
the Euclidean sense.
\begin{lemma}\label{lemm:ste3}
Let $x_0\in D$ and $\eps \in\, ]0,1[$ such that $\overline{S_{\eps}(x_0)}\subset D$. Then, for any
$(t,x)\in H_{\e}(0,x_{0};T_{0})$ we have
\begin{equation}\label{eq:ste110}
P_{t,x}\left(X_{T}\in A ,\tau^{(t)} > T \right) = \int_{A} G(t,x;T,\xi) \dd \xi =
\tP_{t,x}\big(\tX_{T}\in A , \ttau^{(t)} > T \big) ,\qquad  T\in\, ]t,T_0[,\quad
A\in\mathcal{B}\big(S_{\eps}(x_0)\big),
\end{equation}
where $
\tau^{(t)}$ and $
\ttau^{(t)}$ are the $t$-stopping times defined, respectively, as
\begin{align}\label{eq:def_tau}
\tau^{(t)} := \inf\{ s\geq t : X_s \notin S_{\eps}(x_0)   \},&&
\ttau^{(t)} := \inf\{ s\geq t : \tilde{X}_s \notin S_{\eps}(x_0)   \}.
\end{align}
\end{lemma}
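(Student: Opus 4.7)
The plan is to represent the right-hand side of \eqref{eq:ste110} as the value at $(t,x)$ of the solution to the Cauchy-Dirichlet problem for $\Lc$ on the cylinder $H_{\eps}(0,x_0;T_0)$, and then to express this same value as the expectation of a terminal datum along the process stopped at the exit time, via the intrinsic It\^o formula of Theorem \ref{cor:ste101}. Fix $\phi\in C_c^{\infty}(S_{\eps}(x_0))$ and, using the results collected in Lemma \ref{lem:green} (Appendix \ref{app:pdes}), let $u\in C^{2,\alpha}_B(H_{\eps}(0,x_0;T_0))$ be the unique solution of $\Lc u = 0$ on $H_{\eps}(0,x_0;T_0)$ with terminal condition $u(T,\cdot)=\phi$ on $\overline{S_{\eps}(x_0)}$ and vanishing lateral boundary datum; this solution admits the Green function representation
\begin{equation}
u(t,x) = \int_{S_{\eps}(x_0)} G(t,x;T,\xi)\,\phi(\xi)\,\dd\xi, \qquad (t,x)\in H_{\eps}(0,x_0;T_0).
\end{equation}

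The main obstacle is that Theorem \ref{cor:ste101} applies only to test functions with compact support in $]0,T_0[\times D$, while $u$ is only defined on the cylinder and does not enjoy such global support. I would circumvent this by introducing a cutoff $\chi\in C_c^{\infty}(]0,T_0[\times D)$ with $\chi\equiv 1$ on a neighborhood of $[t,T]\times\overline{S_{\eps}(x_0)}$ (recalling $\overline{S_{\eps}(x_0)}\Subset D$), and applying the intrinsic It\^o formula of Theorem \ref{cor:ste101} to $\chi u\in C^{2,\alpha}_{0,B}(]0,T_0[\times D)$. By continuity of $X$, the trajectory $X_s$ lies in $\overline{S_{\eps}(x_0)}$ for $s\in[t,T\wedge\tau^{(t)}]$, hence $\chi u$ and $\Lc(\chi u)$ coincide respectively with $u$ and $\Lc u=0$ along the stopped process; optional stopping applied to the martingale $M^t$ from \eqref{ae41} (which is bounded on $[t,T\wedge \tau^{(t)}]$ thanks to the boundedness of $\chi u$ and \eqref{eq:quad_var}) then yields
\begin{equation}
u(t,x) \;=\; E_{t,x}\bigl[u\bigl(T\wedge\tau^{(t)},X_{T\wedge\tau^{(t)}}\bigr)\bigr].
\end{equation}

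Splitting the expectation on $\{\tau^{(t)}>T\}$ and $\{\tau^{(t)}\leq T\}$, on the former set the integrand equals $u(T,X_T)=\phi(X_T)$, while on the latter the continuity of $X$ and the definition of $\tau^{(t)}$ force $X_{\tau^{(t)}}\in\p S_{\eps}(x_0)$, where $u$ vanishes by construction. Combining with the Green representation gives
\begin{equation}
\int_{S_{\eps}(x_0)} G(t,x;T,\xi)\,\phi(\xi)\,\dd\xi \;=\; E_{t,x}\bigl[\phi(X_T)\,\caratt_{\{\tau^{(t)}>T\}}\bigr],\qquad \phi\in C_c^{\infty}(S_{\eps}(x_0)).
\end{equation}
A standard monotone class argument, approximating $\caratt_A$ for $A\in\mathcal{B}(S_{\eps}(x_0))$ by test functions, then yields the first equality in \eqref{eq:ste110}.

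For the second equality, the very same argument applies with $\tX$ in place of $X$: since $\overline{S_{\eps}(x_0)}\subset D$ and the coefficients of $\tAc_t$ agree with those of $\Ac_t$ on $D$, the cylinder $H_{\eps}(0,x_0;T_0)$ is contained in the common locality domain, and $\tX$ is in particular an $\Ac_t$-local diffusion on $\R^d$ in the sense of Definition \ref{def:local_diffusion}. Therefore the intrinsic It\^o formula applies to $\chi u$ under $\tP_{t,x}$ as well, and the same optional-stopping/boundary-vanishing reasoning produces $u(t,x)=\tilde{E}_{t,x}[\phi(\tX_T)\,\caratt_{\{\ttau^{(t)}>T\}}]$, from which the second identity in \eqref{eq:ste110} follows by the same density argument.
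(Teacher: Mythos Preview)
Your proposal is correct and follows essentially the same route as the paper's proof: reduce to smooth test functions $\phi\in C_c^{\infty}(S_{\eps}(x_0))$, represent the Green integral as the solution $u$ of the Cauchy--Dirichlet problem from Lemma \ref{lem:green}, apply the intrinsic It\^o formula (Theorem \ref{cor:ste101}) to $u$ via an extension-truncation step, use optional stopping and the vanishing of $u$ on the lateral boundary, and then repeat verbatim for $\tX$ since $\tAc_t=\Ac_t$ on $D$. The only differences are cosmetic: you spell out the cutoff $\chi$ and the monotone-class passage to indicators, whereas the paper simply remarks that ``a standard extension-truncation argument can be employed'' and that the test-function formulation is equivalent to \eqref{eq:ste110}.
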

Before to prove Lemma \ref{lemm:ste3} we want to stress the following
\begin{remark}\label{rem:green_estim}
By \eqref{eq:ste110} we obtain
\begin{equation}
\int_{A} G(t,x;T,\xi) \dd \xi \leq \tP_{t,x}\big(\tX_{T}\in A  \big) = \int_{A} \tG(t,x;T,\xi) \dd \xi ,\qquad  
A\in \mathcal{B}\big(S_{\eps}(x_0)\big),
\end{equation}
which implies
\begin{equation}\label{eq:green_estim}
G(t,x;T,\xi)\leq \tG (t,x;T,\xi),\qquad   0< t<T<T_0, \quad x,\xi \in S_{\eps}(x_0).
\end{equation}
\end{remark}
\begin{proof}[Proof of Lemma \ref{lemm:ste3}]
Throughout the proof we will set $\tau:=\tau^{(t)}$ to shorten notation.
Note that \eqref{eq:ste110} is equivalent to
\begin{equation}
E_{t,x}\left[\phi(X_{T}) \caratt_{\tau > T}\right]= \int_{S_{\eps}(x_0)} G(t,x;T,\xi)  \varphi(\xi)  \dd \xi = \tilde{E}_{t,x}\left[\phi(\tX_{T}) \caratt_{\ttau > T}\right] , \qquad 
\phi\in C^{\infty}_0\big( S_{\eps}(x_0) \big).
\end{equation}
Denote by $f$ the unique solution in 
$C^{N+2,\alpha}_B\big(H_{\eps}(0,x_0;T)\big) \,\cap\, C\big((H_{\eps}\cup\partial_{P}H_{\eps})(t,x_0;T)\big)$ 
(see Lemma \ref{lem:green}) of
\begin{equation}\label{ae80}
  \begin{cases}
  \Lc f=0\qquad & \text{on } H_{\eps}(0,x_0;T), \\
    f=0 &  \text{on } \partial_{\Sigma} H_{\eps}(0,x_0;T),\\
    f(T,\cdot) = \varphi &  \text{on } S_{\eps}(x_0) 
  \end{cases}
\end{equation}
which is given by $$ f(t,x) = \int_{S_{\eps}(x_0)} G(t,x;T,\xi)  \varphi(\xi)  \dd \xi. $$ Now, by
Corollary \ref{cor:ste101} combined with Optional Sampling Theorem, the process $M^{t}_{\cdot
\wedge \tau}$ with $M^{t}$ as in \eqref{ae41} and $f$ as in \eqref{ae80}, is an
$\F^{t}$-martingale under $P_{t,x}$: we notice explicitly that, even if $f$ is not defined on
$[0,T_{0}[\times D$ as required by Theorem \ref{cor:ste101}, a standard extension-truncation argument can
be employed. 
Thus
\begin{equation}
E_{t,x}\left[\phi(X_{T}) \caratt_{T<\tau}\right]= E_{t,x}\Big[ f\big(T\wedge \tau,X_{T\wedge \tau}\big)   \Big]=f(t,x). 
\end{equation}
On the other hand, since $\Ac = \tAc$ on $]0,T_0[\times D$, 
$\tX$ is also an \emph{$\locdiff$-local diffusion on 
$D$} and thus 
it holds
\begin{equation}
\tilde{E}_{t,x}\left[\phi(\tX_{T}) \caratt_{T<\ttau}\right]= \tilde{E}_{t,x}\Big[ f\big(T\wedge
\ttau,X_{T\wedge \ttau}\big)   \Big]=f(t,x),\end{equation} which proves \eqref{eq:ste110} and
concludes the proof.



\end{proof}
\begin{proof}[Proof of Theorem \ref{th:main} a),b)] Fix $(t,x)\in\, ]0,T_0[\times \R^d$ and
$x_0\in D$ and let $\eps\in\, ]0,1[$ such that $\overline{S_{\eps}(x_0)}\subseteq D$. 
Let also $U$ and $V$ be two non-empty open subsets such that $x_0\in
U\Subset{V}\Subset{S_{\eps}(x_0)}$.

Define now $\tau^{(t)}_0\equiv t$ and the families $\big(\sigma^{(t)}_n\big)_{n\in\N}$ and
$\big(\tau^{(t)}_n\big)_{n\in\N}$ through the following recursion:
 $$
 \sigma^{(t)}_n:= \inf \big\{  s\geq \tau^{(t)}_{n-1} : X_s\in \overline{V} \big\},
 $$
and $\tau^{(t)}_n: = \tau^{(\s^{(t)}_n)}$ according to notation \eqref{eq:def_tau}, which is
  $$\tau^{(t)}_n= \inf \{  s\geq \sigma^{(t)}_n : X_s\notin S_{\eps}(x_0) \}.$$
Hereafter, whenever it is clear from the context, we will drop the suffix $(t)$ in $\tau^{(t)}_n,\s^{(t)}_n$ to ease the notation.

\vspace{5pt} \noindent \underline{Part a):} note that for any $T\in\, ]t,T_0[$ the event $X_T\in
U$ is included in the disjoint union $\cup_{n\in\N} \big(\sigma^{(t)}_n <T < \tau^{(t)}_n \big)$.
Therefore, for any $A\in\mathcal{B}(U)$ one obtains
\begin{align}
p(t,x;T,A)& = \sum_{n=1}^{\infty} P_{t,x}\big(  X_T\in A,  \sigma_n <T < \tau_n \big) =
\sum_{n=1}^{\infty} P_{t,x}\big(  X_T\in A,  \sigma_n <T < \tau^{(\s_n)} \big) \\ & =
\sum_{n=1}^{\infty} E_{t,x}\Big[ P_{t,x}\big( X_T\in A,  \sigma_n <T < \tau^{(\s_n)} \big|
\F_{\s_n} \big)\Big] \\ &=   \sum_{n=1}^{\infty} E_{t,x}\Big[  P_{t,x}\big( X_T\in A,  T <
\tau^{(\s_n)} \big| \F_{\s_n} \big)  \caratt_{\sigma_n <T} \Big] \intertext{(by the strong Markov
property)} 
& = \sum_{n=1}^{\infty} E_{t,x}\Big[  
P_{s,y}\big(X_{T}\in A ,\tau^{(s)} > T \big)\big|_{s=\sigma_n,y=X_{\sigma_n}} \caratt_{ \sigma_n
<T } \Big]
= \sum_{n=1}^{\infty} E_{t,x} \bigg[  \int_{A} \tGi(\s_n,X_{\s_n};T,\xi) \dd \xi\,    \caratt_{
\sigma_n <T }  \bigg] ,\label{eq:ste220}
\end{align}
where we used \eqref{eq:ste110} in the last equality. Our derivation of \eqref{eq:ste220} follows
closely the original argument by \cite{kusuoka-stroock} even if here we go a step further using
the representation in terms of the Green kernel \eqref{eq:ste110}, which is crucial in the
subsequent study of the regularity properties of the local density of $X$.

From \eqref{eq:ste220} and since $x_{0}$ is arbitrary, it follows that $p(t,x;T,\cdot)$ is
absolutely continuous w.r.t. the Lebesgue measure on $D$ and therefore admits a density
$\G(t,x;T,\xi)$. Moreover, for any $x_0\in D$ and $\eps\in\, ]0,1[$ such that
$\overline{S_{\eps}(x_0)}\subset D$, we have the local representation
\begin{equation}\label{eq:repres_Gamma}
 \G(t,x;T,\xi) =  \sum_{n=1}^{\infty}  E_{t,x}\big[  
 \tGi(\s_n,X_{\s_n};T,\xi) \caratt_{ \sigma_n <T } 
 \big], \qquad T\in\, ]t,T_{0}[,\ \xi\in S_{\e}(x_{0}).
\end{equation}
Assume now that there exists $C_1>0$, independent of $t$, $x$ and $T$, such that
\begin{equation}\label{eq:stima_somma}
\sum_{n=1}^{\infty} P_{t,x} \big(\sigma^{(t)}_n <T\big) \leq C_1, \qquad T\in\, ]t,T_0[.
\end{equation}
Then, by the continuity of $G(t,x;T,\cdot)$ on $S_{\e}(x_{0})$ combined with
\eqref{eq:green_estim} and the estimates \eqref{eq:bound_gamm_tild_forward}, it follows that
$\G(t,x;T,\cdot)$ is continuous and bounded on $S_{\e}(x_{0})$, uniformly w.r.t. $x\in\R^d$.
Therefore, to conclude the proof of Part a) we only need to prove \eqref{eq:stima_somma}.
%

Start by observing that
\begin{equation}\label{eq:ste204}
\sum_{n=2}^{\infty} P_{t,x}\big( \sigma_n <T  \big) \leq \sum_{n=1}^{\infty} P_{t,x}\big( \tau_n <T \big),
\end{equation}
and that, by classical maximal estimates (e.g. \cite{pascuccibook}, p. 296), it holds that
\begin{equation}\label{eq:maximal_estimates}
P_{s,y}\big( \tau^{(s)} < T  \big) \leq C e^{-\frac{1}{C(T-s)}} , \qquad   0<s<T<T_0,\quad  y\in \partial V,
\end{equation}
where $C>0$ only depends on $T_0$ and $\Ac_t$, but not on $s,T$ and $y$.
Therefore, for any $n\geq 1$ we have
\begin{align}
P_{t,x}\big( \tau_n < T  \big) &= E_{t,x}\big[ E_{{t,x}}[  \caratt_{\tau^{(\s_n)} <T}  | \F_{\s_n}
] \big] = E_{{t,x}}\big[ E_{{s,y}}[  \caratt_{\tau^{(s)} <T}  ]|_{s=\s_n,y=X_{\s_n}} \big]
\intertext{(by \eqref{eq:maximal_estimates})} &\leq C e^{-\frac{1}{C(T-t)}} P_{t,x}\big( \sigma_n
< T  \big)\leq C e^{-\frac{1}{C(T-t)}} P_{t,x}\big( \tau_{n-1} < T  \big),
\end{align}
which 
yields $P_{t,x}\big( \tau_n < T  \big)\leq \big(C e^{-\frac{1}{C(T-t)}}\big)^n$. 
This combined with \eqref{eq:ste204} proves \eqref{eq:stima_somma} for $T-t\leq T^*$ and for a positive $T^*$ suitably small only dependent on $T_0$ and $\Ac_t$. To prove \eqref{eq:stima_somma} for a generic $T\in\, ]t,T_0[$ consider a partition $t=t_0<t_1<\cdots<t_N = T,$ such that $t_{k+1}-t_k< T^*$. 
Define $i_k:=\inf\{ n\in \N : \s^{(t)}_n \geq t_k \}$. We first observe that
\begin{align}
\sum_{n=1}^{\infty}   \caratt_{t_k\leq \sigma^{(t)}_n <t_{k+1}}& =\sum_{n=i_k}^{\infty}   \caratt_{ \sigma^{(t)}_n <t_{k+1}} = \sum_{m=0}^{\infty}   \caratt_{\sigma^{(t)}_{i_k+m} <t_{k+1}}
\intertext{(since we have  $\s^{(t)}_{i_k} \in \{ \s^{(t_k)}_{1}, \s^{(t_k)}_{2} \}$ and thus, by induction, $\s^{(t)}_{i_k+m}\geq \s^{(t_k)}_{1+m}$)}
&\leq \sum_{m=1}^{\infty}   \caratt_{ \sigma^{(t_k)}_m <t_{k+1}}, \qquad k=0,\cdots,N-1.
\end{align}
Hence
\begin{align}
\sum_{n=1}^{\infty} P_{t,x} \big(\sigma^{(t)}_n <T\big)
  &=\sum_{k=0}^{N-1} \sum_{n=1}^{\infty}  P_{t,x} \big(t_k\leq \sigma^{(t)}_n <t_{k+1}\big)  \leq \sum_{k=0}^{N-1} \sum_{m=1}^{\infty}  P_{t,x} \big(\sigma^{(t_k)}_m <t_{k+1}\big)\\
  & = \sum_{k=0}^{N-1} \sum_{m=1}^{\infty} E_{t,x}\big[ P_{t,x} \big(\sigma^{(t_k)}_m <t_{k+1}\big| \F^t_{t_k} \big) \big] =  \sum_{k=0}^{N-1} E_{t,x}\bigg[ \sum_{m=1}^{\infty} P_{t_k,y} \big(\sigma^{(t_k)}_n <t_{k+1} \big)\big|_{y=X_{t_k}}  \bigg],
  \end{align}
  which proves \eqref{eq:stima_somma}.

\vspace{5pt} \noindent \underline{Part b):}
we 
only prove the statement for $N=2$, the general case being analogous. 
By combining the representation of $G$ in \cite{Francesco}, p. 36, with the internal estimates in the same reference that are reported in Theorem \ref{th:schauder_est} below, it follows that
\begin{equation}\label{eq:bound_deriv_green}
|\partial_{\xi_{i}} \tGi(s,y;T,\xi)| + |\partial_{\xi_{i}\xi_{j}} \tGi(s,y;T,\xi)| + |Y_{T,\xi} \tGi(s,y;T,\xi)| \leq C_2,  \qquad 0< s < T<T_0,\quad y\in \partial V, \quad \xi\in U,
\end{equation}
for any $i,j=1,\cdots,p_0$. 
This and \eqref{eq:stima_somma} allow us to employ bounded convergence theorem and differentiate twice under the sign of expectation 
the right-hand side of \eqref{eq:repres_Gamma} w.r.t. $\xi$. For any $T\in\, ]t,T_0[$, $\xi\in U$ we obtain:
\begin{align}
\partial_{\xi_i}\G(t,x;T,\xi) &=  \sum_{n=1}^{\infty}  E_{t,x}\big[  
\partial_{\xi_i} \tGi(\s_n,X_{\s_n};T,\xi) \caratt_{ \sigma_n <T } 
 \big],
 \\ \partial_{\xi_i\xi_j}\G(t,x;T,\xi) &=  \sum_{n=1}^{\infty}  E_{t,x}\big[  
\partial_{\xi_i\xi_j} \tGi(\s_n,X_{\s_n};T,\xi) \caratt_{ \sigma_n <T } 
 \big],
\end{align}
for 
$i,j=1,\cdots,p_0$. As for $Y \G(t,x;\cdot,\cdot)$, 
we have
\begin{align}
Y_{T,\xi} \G(t,x;T,\xi) &= \lim_{h\to 0} \frac{\G\big(t,x;T+h,e^{hB}\xi\big) - \G(t,x;T,\xi)}{h}  \\
& = \lim_{h\to 0} \frac{1}{h} \sum_{n=1}^{\infty} \Big( E_{t,x}\big[  
\tGi(\s_n,X_{\s_n};T+h,e^{hB}\xi) \caratt_{ \sigma_n <T+h } 
 \big] -
 E_{t,x}\big[  
\tGi(\s_n,X_{\s_n};T,\xi) \caratt_{ \sigma_n <T } 
 \big] \Big) \\
& = \lim_{h\to 0} \frac{1}{h} \sum_{n=1}^{\infty}  E_{t,x}\big[  
\underbrace{\tGi(\s_n,X_{\s_n};T+h,e^{hB}\xi) \caratt_{T\leq \sigma_n <T+h }}_{=:I_{1,n}(h)} 
 \big] \\
 & \quad + \lim_{h\to 0} \frac{1}{h} \sum_{n=1}^{\infty}  E_{t,x}\big[  
\underbrace{\big(  \tGi(\s_n,X_{\s_n};T+h,e^{hB}\xi) - \tGi(\s_n,X_{\s_n};T,\xi) \big)
\caratt_{ \sigma_n <T }}_{I_{2,n}(h)} 
 \big].
\end{align}
Remark \ref{rem:green_estim} together with 
\eqref{eq:bound_gamm_tild_forward} on one hand, and mean value theorem on the other, yield
\begin{align}
\frac{1}{h}I_{1,n}(h) &\leq C \caratt_{T\leq \sigma_n <T+h }, \\
\frac{1}{h}I_{2,n}(h) &= Y_{T,\xi} \tGi\big(\s_n,X_{\s_n};T+\tilde{h},e^{\tilde h B}\xi\big) \caratt_{ \sigma_n <T }, \quad \text{with }   |\tilde h|\leq h.
\end{align}
Here the low index in $Y_{T,\xi}$ is meant to stress that $Y$ is computed with respect to the variables $(T,\xi)$.
By \eqref{eq:stima_somma} and \eqref{eq:bound_deriv_green} we can apply bounded convergence theorem and obtain
\begin{equation}
Y_{T,\xi}\G(t,x;T,\xi) =  \sum_{n=1}^{\infty}  E_{t,x}\big[  \caratt_{ \sigma_n <T } 
Y_{T,\xi} \tGi(\s_n,X_{\s_n};T,\xi) 
 \big], \qquad T\in\, ]t,T_0[,\quad \xi\in U.
\end{equation}
Proceeding analogously, by employing again the Schauder estimates 
reported in Theorem \ref{th:schauder_est}, one also proves
\begin{equation}
\partial_{\x_i} \G(t,x;\cdot,\cdot)\in C^{1+\alpha}_Y(]t,T_0[ \times U), \quad  \partial_{\x_i\xi_j}\G(t,x;\cdot,\cdot),Y\G(t,x;\cdot,\cdot) \in C^{0,\alpha}_B(]t,T_0[ \times U), \qquad i,j=1,\cdots,p_0,
\end{equation}
which is $ \G(t,x;\cdot,\cdot)\in C^{2,\alpha}_B(]t,T_0[ \times U)$.
Eventually, $ \G(t,x;\cdot,\cdot)\in C^{2,\alpha}_B(]t,T_0[ \times D)$ follows from the fact that $x_0$ is arbitrary. The fact that $\G(t,x;\cdot,\cdot)$ solves \eqref{eq:forward_kolm} is now a straightforward consequence of Remark \ref{rem:kolm_forw_dist}, by integrating by parts the left-hand side of \eqref{ae42_bis} and since $f$ is arbitrary.


\end{proof}

\subsection{Proof of Theorem \ref{th:main}, Part c)}

Hereafter throughout this section we asssume 
Assumption \ref{assum:feller} to be in force as well.

\begin{notation}
For any $T\in\, ]0,T_{0}[$ and any $\phi$ bounded and Borel-measurable on $\R^d$ (in short
$\phi\in m\mathcal{B}_b$), let $u_{\varphi,T}:]0,T[\times D \to \R$ be the function defined as
$u_{\varphi,T}(t,x) :=  \big(\TT_{t,T} \varphi \big)(x)$.
\end{notation}

\begin{lemma}\label{lem:expect_value_pde}
Let 
$T\in\, ]0,T_0[$ and $\varphi\in m\mathcal{B}_b$ such that $u_{\varphi,T} \in C( ]0,T[\times D )$.
Then, $u_{\varphi,T}
\in C^{N+2,\alpha}_B( ]0,T[\times D )$ 
and solves the backward Kolmogorov equation \eqref{eq:backward_kolm}.
\end{lemma}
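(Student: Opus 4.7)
The plan is to exploit a local Feynman-Kac-type representation by combining the existence and regularity theory for the Cauchy-Dirichlet problem (Lemma \ref{lem:green}) with the intrinsic It\^o formula of Theorem \ref{cor:ste101}. Fix an arbitrary $(t_0,x_0)\in\,]0,T[\times D$ and choose $\e\in\,]0,1[$ and $T^*\in\,]t_0,T[$ such that $\overline{S_{\e}(x_0)}\subset D$. Since by hypothesis $u_{\varphi,T}$ is continuous on $]0,T[\times D$ (and globally bounded because $\varphi$ is bounded), the function $u_{\varphi,T}$ is a continuous datum on the parabolic boundary $\partial_P H_{\e}(t_0,x_0;T^*)$. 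Lemma \ref{lem:green} then provides a unique solution
\begin{equation*}
 f\in C^{N+2,\alpha}_B\bigl(H_{\e}(t_0,x_0;T^*)\bigr)\cap C\bigl(\overline{H_{\e}(t_0,x_0;T^*)}\bigr)
\end{equation*}
of the Cauchy-Dirichlet problem $\Lc f=0$ on $H_{\e}(t_0,x_0;T^*)$ with boundary data $f=u_{\varphi,T}$ on $\partial_P H_{\e}(t_0,x_0;T^*)$.

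The next step is to identify $f$ with $u_{\varphi,T}$ on the cylinder via a stochastic representation. Let $\tau^{(t_0)}$ be the first exit time of $X$ from $S_{\e}(x_0)$ after $t_0$, as in \eqref{eq:def_tau}, and set $\sigma:=\tau^{(t_0)}\wedge T^*$. After an extension-truncation of $f$ to a function in $C^{2,\alpha}_{0,B}(]0,T_0[\times D)$ that agrees with $f$ in a neighborhood of $\overline{H_{\e}(t_0,x_0;T^*)}$ (exactly as in the proof of Lemma \ref{lemm:ste3}, and harmless because the stopped process cannot leave the cylinder), Theorem \ref{cor:ste101} together with the Optional Sampling Theorem and $\Lc f=0$ yield
\begin{equation*}
 f(t,x)=E_{t,x}\bigl[f(\sigma,X_{\sigma})\bigr]=E_{t,x}\bigl[u_{\varphi,T}(\sigma,X_{\sigma})\bigr],\qquad (t,x)\in H_{\e}(t_0,x_0;T^*),
\end{equation*}
since $f=u_{\varphi,T}$ on $\partial_P H_{\e}(t_0,x_0;T^*)$ and $\sigma<T$ almost surely. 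Using the strong Markov property of $X$ and the tower property,
\begin{equation*}
 E_{t,x}\bigl[u_{\varphi,T}(\sigma,X_{\sigma})\bigr]=E_{t,x}\bigl[E_{\sigma,X_{\sigma}}[\varphi(X_T)]\bigr]=E_{t,x}[\varphi(X_T)]=u_{\varphi,T}(t,x),
\end{equation*}
so $u_{\varphi,T}\equiv f$ on $H_{\e}(t_0,x_0;T^*)$.

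Consequently $u_{\varphi,T}$ inherits the intrinsic regularity $C^{N+2,\alpha}_B$ on $H_{\e}(t_0,x_0;T^*)$ and satisfies the backward Kolmogorov equation $\Lc u_{\varphi,T}=0$ there; since $(t_0,x_0)$ was arbitrary and $T^*$ could be chosen arbitrarily close to $T$, both properties hold on all of $]0,T[\times D$. The main obstacle is the rigorous application of the intrinsic It\^o formula to $f$: the function $f$ is a solution of the PDE only inside the cylinder and is not a priori in the class $C^{2,\alpha}_{0,B}(]0,T_0[\times D)$ required by Theorem \ref{cor:ste101}. This is overcome by the extension-truncation argument mentioned above, which is legitimate precisely because the process is stopped at $\sigma$ before leaving the region where $f$ is originally defined, so that only the values of the extension inside the cylinder are probed by the It\^o formula.
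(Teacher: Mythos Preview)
Your proof is correct and follows essentially the same route as the paper: solve the Cauchy--Dirichlet problem for $\Lc$ on a local cylinder with boundary datum $u_{\varphi,T}$, then apply the intrinsic It\^o formula of Theorem \ref{cor:ste101} (with the extension-truncation device and optional sampling) together with the strong Markov property to identify the solution with $u_{\varphi,T}$. One small notational slip: under $P_{t,x}$ for $(t,x)\in H_{\e}(t_0,x_0;T^*)$ you should use $\tau^{(t)}$ rather than $\tau^{(t_0)}$, and the paper avoids the issue of $(t_0,x_0)$ sitting on the initial face by taking cylinders of the form $H_{\e}(0,x_0;T-\delta)$, but your covering argument works once you let the base time vary below the target point.
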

Again, the proof is based on the crucial fact that the It\^o formula \eqref{ae41} is valid for
functions that are $C^{2}$ in the intrinsic sense, i.e. $C^{2}_{B}$, and not only for functions the
are $C^{2}$ in the Euclidean sense.
\begin{proof}
Let $\delta>0$, $x_0\in D$, $\eps \in\, ]0,1[$ such that $\overline{S_{\eps}(x_0)}\subset D$ and
denote by $f$ the unique solution in $C^{N+2,\alpha}_B\big(H_{\eps}(0,x_0;T-\delta)\big) \,\cap\,
C\big((H_{\eps}\cup\partial_{P}H_{\eps})(0,x_0;T-\delta)\big)$ (see Lemma \ref{lem:green}) of
\begin{equation}\label{ae80_ter_bis}
  \begin{cases}
 \Lc f=0\qquad & \text{on } H_{\eps}(0,x_0;T-\delta), \\
    f=u_{\phi,T} &  \text{on } \partial_{P} H_{\eps}(0,x_0;T-\delta).
  \end{cases}
\end{equation}
For any $t\in\, ]0,T-\delta[$ let now $\tau=\tau^{(t)}$ be the $t$-stopping time as defined in
\eqref{eq:def_tau}. By Theorem \ref{cor:ste101} combined with Optional Sampling Theorem, the
process $M^{t}_{\cdot \wedge \tau}$, with $M^{t}$ as in \eqref{ae41} and $f$ as in
\eqref{ae80_ter_bis}, is an $\F^{t}$-martingale under $P_{t,x}$. Thus
\begin{align}
f(t,x)&= E_{t,x}\big[ u_{\phi,T}\big((T-\d)\wedge \tau,X_{(T-\d)\wedge \tau}\big)   \big]
= E_{t,x}\big[ E_{s,y}[\phi(X_T) ]|_{s=(T-\d)\wedge \tau , y=X_{(T-\d)\wedge \tau} }  \big]
\intertext{(by Strong Markov property)}
&= E_{t,x}\big[   E_{t,x}[  \phi(X_T) | \F_{(T-\d)\wedge \tau} ]   \big] = E_{t,x}[  \phi(X_T) ] = u_{\phi,T}(t,x).
\end{align}
Since $x_0$ and $\d$ are arbitrary, then  $u_{\varphi,T}
\in C^{N+2,\alpha}_B( ]0,T[\times D )$ 
and solves 
\eqref{eq:backward_kolm}.
\end{proof}

\begin{lemma}[\bf Strong Feller property]\label{lemm:strong_feller}
For any $T\in\, ]0,T_{0}[$ and any $\varphi\in m\mathcal{B}_b$, $u_{\varphi,T} \in C( ]0,T[\times
D )$.
\end{lemma}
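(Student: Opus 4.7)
The plan is to bootstrap the standard Feller property (Assumption \ref{assum:feller}) from continuous bounded test functions to all bounded Borel ones via the functional monotone class theorem. For each fixed $T\in\,]0,T_0[$, define the class
\[
\mathcal{H}_T := \bigl\{\varphi \in m\mathcal{B}_b : u_{\varphi,T} \in C(]0,T[\times D)\bigr\}.
\]
Then $\mathcal{H}_T$ is a vector space containing the constant function $1$, and by Assumption \ref{assum:feller} it contains $C_b(\R^d)$. Since $C_b(\R^d)$ is a multiplicative class generating $\mathcal{B}(\R^d)$, by the functional monotone class theorem the proof reduces to showing that $\mathcal{H}_T$ is closed under bounded monotone convergence.

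To establish this closure, I would take $0\le\varphi_n\uparrow\varphi$ with $\varphi_n\in\mathcal{H}_T$ and $\varphi$ bounded, and set $u_n:=u_{\varphi_n,T}$. Since each $u_n$ is continuous on $]0,T[\times D$ by assumption, Lemma \ref{lem:expect_value_pde} applies and gives $u_n\in C^{N+2,\alpha}_B(]0,T[\times D)$ with $\Lc u_n=0$ on $]0,T[\times D$, together with $\|u_n\|_\infty\le\|\varphi_n\|_\infty\le\|\varphi\|_\infty$ (because $\TT_{t,T}$ is a contraction on $m\mathcal{B}_b$). The crucial step is then to invoke the internal Schauder estimate for the Kolmogorov operator $\Lc$ in Theorem \ref{th:schauder_est}, applied to the homogeneous equation: for any pair of open sets $K\Subset K'\Subset\,]0,T[\times D$,
\[
\|u_n\|_{C^{2,\alpha}_B(K)}\le C_{K,K'}\|u_n\|_{L^\infty(K')}\le C_{K,K'}\|\varphi\|_\infty,
\]
uniformly in $n$.

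Having these uniform bounds, I would use monotone convergence applied to the kernel $p(t,x;T,dy)$ to deduce pointwise convergence $u_n\to u_{\varphi,T}$ on $]0,T[\times\R^d$; the uniform $C^{2,\alpha}_B(K)$-bound supplies equicontinuity of $\{u_n\}$ on every compact $K\Subset \,]0,T[\times D$. An Arzel\`a--Ascoli argument then promotes the pointwise convergence to locally uniform convergence, which forces the limit $u_{\varphi,T}$ to be continuous on $]0,T[\times D$, i.e.\ $\varphi\in\mathcal{H}_T$. The monotone class theorem then yields $\mathcal{H}_T=m\mathcal{B}_b$.

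The main technical hurdle will be a clean invocation of the internal Schauder estimate: I need Theorem \ref{th:schauder_est} to bound the $C^{2,\alpha}_B$ semi-norm of a solution of $\Lc u=0$ locally in terms of the sup-norm alone (with no requirement to control higher-order Lie derivatives on the right-hand side), and to ensure that the regularity available on the coefficients under Assumption \ref{assum1and} (here only $N\ge 0$ is needed) suffices for this estimate to hold. Modulo this check, all other ingredients, namely Lemma \ref{lem:expect_value_pde}, the contractivity of $\TT_{t,T}$, and the monotone class theorem, are routine.
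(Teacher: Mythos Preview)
Your argument is correct. The core machinery is the same as the paper's: both proofs use Lemma \ref{lem:expect_value_pde} (applied to functions $\psi$ for which $u_{\psi,T}$ is already known to be continuous) to put $u_{\psi,T}$ into the class where the internal Schauder estimate of Theorem \ref{th:schauder_est} applies, and then exploit the resulting uniform $C^{0,\alpha}_B$-bound to obtain equicontinuity. Your concern about the Schauder estimate is unfounded: Theorem \ref{th:schauder_est} gives precisely the bound you need, with only $\sup|u|$ on the right, and it holds under Assumption \ref{assum1and} with $N\ge 0$; moreover, since $\tilde{\Lc}=\Lc$ on $]0,T_0[\times D$ by construction, the estimate applies to solutions of $\Lc u=0$ on subdomains of $]0,T_0[\times D$.

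Where you differ from the paper is in the passage from $C_b$ to $m\mathcal{B}_b$. You use the functional monotone class theorem: prove that $\mathcal{H}_T$ is stable under bounded monotone limits by combining pointwise (monotone) convergence of $u_n\to u_{\varphi,T}$ with the equicontinuity coming from the uniform Schauder bound, and conclude via Arzel\`a--Ascoli. The paper instead fixes a target $\varphi\in m\mathcal{B}_b$ and two points $(t_0,x_0),(t,x)$, forms the finite measure $\mu=p(t,x;T,\cdot)+p(t_0,x_0;T,\cdot)$, and uses $L^1(\mu)$-density of $C_b$ to produce $\psi_n\in C_b$ with $\|\psi_n\|_\infty\le\|\varphi\|_\infty$ and $u_{\psi_n,T}\to u_{\varphi,T}$ at \emph{both} points; the equicontinuity bound $|u_{\psi_n,T}(t,x)-u_{\psi_n,T}(t_0,x_0)|\le C\|\varphi\|_\infty\|(t_0,x_0)^{-1}\circ(t,x)\|_B$ then transfers to $u_{\varphi,T}$ in the limit. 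Your route is cleaner and more systematic (one global approximation instead of a point-pair-dependent one), while the paper's route avoids the monotone class theorem at the cost of an ad hoc density argument. Both are equally valid.
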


\begin{proof}
First note that, 
by Assumption \ref{assum:feller} combined with Lemma \ref{lem:expect_value_pde}, $u_{\psi,T} \in C^{N+2,\alpha}_B( ]0,T[\times D )$ and solves the backward Kolmogorov equation \eqref{eq:backward_kolm}, for any $\psi\in C_b(\R^d)$. 
Thus, by the internal Schauder estimates 
reported in Theorem \ref{th:schauder_est},
for any 
bounded domain 
${V}\Subset\, ]0,T[\times D$  we have
\begin{equation}
 \| u_{\psi,T} \|_{C^{0,\alpha}_B(V)}  \leq C \sup_{]0,T[\times D}  |u_{\psi,T}| \leq C 
 \| \psi \|_{\infty},
\end{equation}
where $C$ is a positive constant independent of $\psi$. In particular, by Theorem 
\ref{th:main_tay}, for any $(t_0,x_0)\in\, ]0,T[\times D$ there exists a neighborhood $U_{(t_0,x_0)}$ 
that
\begin{equation}
\big| u_{\psi,T}(t,x) - u_{\psi,T}(t',x') \big| \leq C \| \phi \|_{\infty} \| (t',x')^{-1} \circ (t,x) \|_B , \qquad (t,x),(t',x')\in U_{(t_0,x_0)}, \qquad
\end{equation}
for any $\psi\in C_b(\R^d)$ such that $\| \psi \|_{\infty}\leq \| \phi \|_{\infty}$. 
Therefore, in order to prove that $u_{\phi,T}$ is continuous in $(t_0,x_0)$ it suffices to prove that, for any 
$(t,x)
\in U_{(t_0,x_0)}$, there exist a sequence 
of functions $\psi_n\in C_b(\R^d)$ with $\| \psi_n \|_{\infty}\leq \| \phi \|_{\infty}$ such that
\begin{equation}\label{eq:convergence_psi}
u_{\psi_n,T}(t,x) \longrightarrow u_{\phi,T}(t,x), \quad u_{\psi_n,T}(t_0,x_0) \longrightarrow u_{\phi,T}(t_0,x_0) \qquad \text{as } n\to \infty.
\end{equation}
To see this, let $\mu$ be the measure on $\mathcal{B}(\R^d)$ defined as
\begin{equation}
\mu(\dd z) = p(t,x;T,\dd z) + p(t_0,x_0;T,\dd z).
\end{equation}
Note that we have
\begin{equation}\label{eq:measure_mu_ord}
 p(t,x;T,\dd z),\, p(t_0,x_0;T,\dd z) \ll \mu (\dd z).
\end{equation}
Moreover, $\mu$ is a finite measure and thus, by Proposition 3.16 in \cite{AmbrosioDaPrato}, there exists a sequence of $\psi_n\in C_b(\R^d)$ with $\| \psi_n \|_{\infty}\leq \| \phi \|_{\infty}$ such that
\begin{equation}
\| \psi_n -\phi \|_{L^1(\mathcal{B}(\R^d),\mu)} \longrightarrow 0  \quad \text{as }n\to \infty.
\end{equation}
Therefore, by \eqref{eq:measure_mu_ord}, $\psi_n\to \phi$ both $p(t,x;T,\dd z)$- and
$p(t_0,x_0;T,\dd z)$-almost everywhere. Thus bounded convergence theorem yields
\eqref{eq:convergence_psi} and concludes the proof.
\end{proof}

\begin{proposition}\label{cor:back_kolm}
For any $T\in\, ]0,T_{0}[$ and any $\phi\in m\mathcal{B}_b$, $u_{\varphi,T}
\in C^{N+2,\alpha}_B( ]0,T[\times D )$ 
and solves the backward Kolmogorov equation \eqref{eq:backward_kolm}.
\end{proposition}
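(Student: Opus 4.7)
The plan is essentially to chain together the two preceding lemmas. By Lemma \ref{lemm:strong_feller} (the strong Feller property), for any bounded Borel-measurable $\varphi\in m\mathcal{B}_b$ and any $T\in\,]0,T_0[$, the function $u_{\varphi,T}$ is continuous on $]0,T[\times D$. Once continuity is in hand, the hypothesis of Lemma \ref{lem:expect_value_pde} is satisfied, and we may invoke it directly to conclude that $u_{\varphi,T}\in C^{N+2,\alpha}_B(]0,T[\times D)$ and solves the backward Kolmogorov equation \eqref{eq:backward_kolm}.

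It is worth emphasizing the logical structure: Lemma \ref{lem:expect_value_pde} is proved by a Feynman--Kac argument based on the intrinsic It\^o formula of Theorem \ref{cor:ste101} applied to the Cauchy--Dirichlet solution on cylinders $H_\eps(0,x_0;T-\delta)$, which requires only that $u_{\varphi,T}$ be continuous (so that it can serve as boundary data of a classical Cauchy--Dirichlet problem and that the identification via the Markov property goes through). Meanwhile, Lemma \ref{lemm:strong_feller} establishes precisely this continuity starting from the (standard) Feller Assumption \ref{assum:feller}, using the Schauder internal estimates of Theorem \ref{th:schauder_est} together with a density argument based on Proposition~3.16 of \cite{AmbrosioDaPrato}.

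Thus the proof consists of just these two lines:
\begin{proof}
By Lemma \ref{lemm:strong_feller}, $u_{\varphi,T}\in C(]0,T[\times D)$. Applying Lemma \ref{lem:expect_value_pde} to $u_{\varphi,T}$ yields that $u_{\varphi,T}\in C^{N+2,\alpha}_B(]0,T[\times D)$ and solves \eqref{eq:backward_kolm} on $]0,T[\times D$.
\end{proof}

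There is no real obstacle here, as all the deep work has been carried out in Lemmas \ref{lem:expect_value_pde} and \ref{lemm:strong_feller}; the proposition is a clean corollary. The only substantive point to double-check when writing up is that the constants in the Schauder estimates used inside Lemma \ref{lemm:strong_feller} are indeed uniform in $\psi$ (depending only on $\|\psi\|_\infty$), so that the approximation argument transfers continuity from $u_{\psi_n,T}$ to $u_{\varphi,T}$ without loss.
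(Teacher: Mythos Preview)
Your proof is correct and matches the paper's own argument exactly: the paper states that the result is an immediate consequence of Lemmas \ref{lem:expect_value_pde} and \ref{lemm:strong_feller}, which is precisely the two-step chaining you describe.
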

\begin{proof}
It is an immediate consequence of Lemmas \ref{lem:expect_value_pde} and \ref{lemm:strong_feller}.
\end{proof}

\begin{lemma}\label{lem:chap_kolm_density}
For any $0< t <T<T_0$ and $\xi\in D$,
the function $\G(t,\cdot;T,\xi)\in m\mathcal{B}_b$, and 
\begin{equation}\label{eq:chap_kolm_density}
\G(t,x;T,\xi) =  E_{t,x}[\G(s,X_s;T,\xi)] = \big(\TT_{t,s} \G(s,\cdot;T,\xi) \big)(x) 
,\qquad 
t<s<T,\quad 
x\in\R^d
.
\end{equation}
\end{lemma}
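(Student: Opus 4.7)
The plan is to derive the Chapman--Kolmogorov identity \eqref{eq:chap_kolm_density} from the Markov property of $X$ combined with Fubini's theorem, and then promote the resulting Lebesgue--a.e.\ identity in $\xi$ to a pointwise one via continuity in the forward space variable.

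First I would check the two preliminary properties asserted in the lemma. Boundedness of $x\mapsto\G(t,x;T,\xi)$ is immediate from Theorem \ref{th:main} a): since $\G(t,x;T,\cdot)$ is locally bounded uniformly w.r.t. $x\in\R^d$, choosing any compact neighborhood $K\subset D$ of $\xi$ gives $\sup_{x\in\R^d}\G(t,x;T,\xi)\le \sup_{x\in\R^d,\eta\in K}\G(t,x;T,\eta)<\infty$. For Borel measurability of $x\mapsto\G(t,x;T,\xi)$, I would invoke the series representation \eqref{eq:repres_Gamma} obtained within the proof of Part a), each summand being Borel measurable in $x$ as the $P_{t,x}$-expectation of a bounded functional of the trajectory of $X$ up to a stopping time.

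Next, I would combine the Markov property $p(t,x;T,A)=E_{t,x}[p(s,X_s;T,A)]$ with the existence of the local density. For any Borel $A\subseteq D$,
\begin{equation*}
\int_A \G(t,x;T,\xi)\,d\xi = p(t,x;T,A) = E_{t,x}\!\left[\int_A \G(s,X_s;T,\xi)\,d\xi\right] = \int_A E_{t,x}[\G(s,X_s;T,\xi)]\,d\xi,
\end{equation*}
where the last step is Fubini's theorem, applicable because $(y,\xi)\mapsto \G(s,y;T,\xi)$ is nonnegative and jointly measurable (and locally bounded in $y$, uniformly in $\xi$ ranging over compacts). As $A$ is an arbitrary Borel subset of $D$, this forces $\G(t,x;T,\xi) = E_{t,x}[\G(s,X_s;T,\xi)]$ for Lebesgue--a.e.\ $\xi\in D$.

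Finally, I would upgrade this to equality for every $\xi\in D$ by a continuity argument. The left-hand side $\xi\mapsto\G(t,x;T,\xi)$ is continuous on $D$ by Theorem \ref{th:main} a). For the right-hand side, given $\xi_0\in D$ I pick a compact neighborhood $K\subset D$ of $\xi_0$ and use Part a) again to bound $\G(s,y;T,\xi)\le C_K$ for all $y\in\R^d$ and $\xi\in K$; combined with the continuity of $\G(s,y;T,\cdot)$, bounded convergence yields continuity of $\xi\mapsto E_{t,x}[\G(s,X_s;T,\xi)]$ at $\xi_0$. Two continuous functions coinciding a.e.\ coincide everywhere, and the proof is complete. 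The main subtlety is precisely this transfer from ``a.e.\ $\xi$'' to ``every $\xi$'', which relies essentially on the continuity and uniform local boundedness already obtained in Part a).
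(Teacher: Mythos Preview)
Your argument is correct and takes a genuinely different route from the paper's. For measurability, the paper approximates $\Gamma(t,x;T,\xi)$ by $u_{\phi_n,T}(t,x)=E_{t,x}[\phi_n(X_T)]$ with $\phi_n\in C_0(D)$ converging to $\delta_\xi$, and appeals to the Feller Assumption \ref{assum:feller} to conclude continuity (hence measurability) of each $u_{\phi_n,T}(t,\cdot)$; you instead use the series representation \eqref{eq:repres_Gamma}, relying on the measurability of $x\mapsto P_{t,x}$ that is built into the strong Markov framework. For the identity itself, the paper again uses the same mollifier sequence, writing $\Gamma(t,x;T,\xi)=\lim_n E_{t,x}[u_{\phi_n,T}(s,X_s)]$ via the Markov property and passing the limit inside by bounded convergence, thus obtaining the pointwise identity directly; your route (Tonelli to get a.e.\ equality in $\xi$, then continuity in $\xi$ on both sides to upgrade to pointwise) is more elementary and avoids the mollifier machinery, at the cost of the extra continuity-in-$\xi$ check for the right-hand side. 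The paper's approach is unified---one approximation handles both parts---and makes the role of the Feller assumption explicit; yours is more direct and in fact shows that the measurability claim does not really need Assumption \ref{assum:feller}. One small imprecision: the parenthetical ``locally bounded in $y$, uniformly in $\xi$ ranging over compacts'' has the roles reversed (Part a) gives local boundedness in $\xi$, uniformly in $x$), but since Tonelli only needs nonnegativity and joint measurability, this does not affect the argument.
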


\begin{proof}
The boundedness is already contained in Part a) of Theorem \ref{th:main}.
We first 
prove the measurability of $\G(t,\cdot;T,\xi)$.
Let $(\phi_n)_{n\in\N}$ a family of functions in $C_0(D)$ such that $\phi_n\to \d_{\xi}$, i.e.
\begin{equation}
\int_{D} f(y) \phi_n(y) \dd y \to f(\xi)\quad \text{as }n\to\infty , \qquad f\in C(D).
\end{equation}
Therefore, since $\Gamma(t,x;T,\cdot)\in C(D)$ (again by Part a) of Theorem \ref{th:main}), we have 
\begin{equation}\label{eq:lim_density}
\Gamma(t,x;T,\xi) = \lim_{n\to\infty}  \int_{D} \phi_n(y) \Gamma(t,x;T,y)  \dd y = \lim_{n\to\infty} u_{\phi_n,T}(t,x), \qquad x\in \R^d,
\end{equation}
%
and since $u_{\phi_n,T}(t,\cdot)$ is continuous (by Assumption \ref{assum:feller}), 
$\G(t,\cdot;T,\xi)$ is measurable as it is the pointwise limit of a sequence of measurable functions.

%

We now prove \eqref{eq:chap_kolm_density}. 
By \eqref{eq:lim_density} along with Markov property, it holds that  
\begin{align}
\Gamma(t,x;T,\xi) & =\lim_{n\to\infty} E_{t,x}[\phi_n(X_T)] =\lim_{n\to\infty} E_{t,x}\big[ E_{t,x}[\phi_n(X_T)|\F^t_s] \big] =\lim_{n\to\infty} E_{t,x}\big[ E_{s,y}[\phi_n(X_T)]|_{y=X_s} \big]  \\
& = \lim_{n\to\infty} E_{t,x}\big[ u_{\phi_n,T}(s,X_s) \big]=  E_{t,x}[\G(s,X_s;T,\xi)] , 
\end{align}
where, in the last equality, we employed again \eqref{eq:lim_density} with $t=s$ and $x=X_s$ along with bounded convergence theorem (it is not restrictive to assume $\|\phi_n\|_{L^1(D)} = 1$ and thus, since $\G(s,x;T,\cdot)$ is locally bounded on $D$ uniformly w.r.t. $x\in\R^d$, $u_{\phi_n,T}(s,X_s)$ bounded uniformly w.r.t. $n$). 
\end{proof}

\begin{proof}[Proof of Theorem \ref{th:main},c)] It is a straightforward consequence of Proposition \ref{cor:back_kolm} and Lemma \ref{lem:chap_kolm_density}.
\end{proof}

\appendix
\section{Preliminary PDE results}\label{app:pdes}

In this appendix we collect some useful results about the operators $\Lc$ in \eqref{eq:operator_L} and $\tilde\Lc$ 
in \eqref{eq: eq:operator_L_tide}, under the Assumptions \ref{ass:hypo} and \ref{assum1and} and with the coefficients $\ta_{ij}  ,\ta_{i}:]0,T_0[\times \R^d\to \R$, $i,j=1,\cdots,p_0$, satisfying the conditions ($\tilde{\text{i}}$)-($\tilde{\text{ii}}$) at the beginning of Section \ref{sec:proof_main}. 

\begin{theorem}\label{lem:sol_fond_tilde}
There exists a unique fundamental solution 
for $
\Lc$, namely a {continuous}
non-negative function $\tG=\tG(t,x;T,\xi)$ defined for any $0< t<T<T_0$ and $x,\xi\in
\R^d$ enjoying the following properties:
\begin{itemize}
\item[a)] for any $(T,\xi)\in\, ]0,T_0[\times \R^d$, the function $\tG(\cdot,\cdot;T, \xi)\in
C^{N+2,\alpha}_B(]0,T[\times \R^d)$ and is a solution of
\begin{equation}\label{eq:fund_sol_cauchy}
  \begin{cases}
  \tilde\Lc u=0\qquad & \text{on } ]0,T[\times \R^d, \\
     u(T,\cdot) = \delta_{\xi},&
  \end{cases}
\end{equation}
where the terminal condition is in the distributional sense, i.e.
  $$\lim_{
  t\to T^-}\int_{\R^{d}}\tG(t,x;T,\xi)\phi(x)dx=\phi(\xi),\qquad \phi\in C_{0}(\R^{d});$$

\item[b)] if $N\geq 2$, then for any $(t,x)\in\, ]0,T_0[\times \R^d$, the function $\tG(t,x;\cdot,\cdot)\in C^{N,\alpha}_B(]t,T_0[\times \R^d)$ and is  solution to
\begin{equation}\label{eq:fund_sol_cauchy_adj}
  \begin{cases}
  \tilde\Lc^* u=0\qquad & \text{on } ]t,T_0[\times \R^d, \\
     u(t,\cdot) = \delta_{x}, &
  \end{cases}
\end{equation}
where $
\tilde\Lc^*$ is the formal adjoint of $
\tilde\Lc$.

%

\item[c)] for any $\alpha,\beta\in\N_0^{d}$ and $p,q\in\N_0$ with $2p+[\alpha]_B\leq N+2$ and $2q + [\beta]_B\leq 
N \caratt_{[2,\infty[}(N)$, we have
\begin{align}
\big|Y^p_{t,x} \partial_x^{\alpha}  \tG(t,x;T,\xi) \big| & \leq C (T-t)^{-\frac{
2p+[\alpha]_B}{2}} \bar{\Gamma}_M (t,x;T,\xi),\\
\big| Y^{q}_{T,\xi} \partial_{\xi}^{\beta}  \tG(t,x;T,\xi) \big| & \leq C (T-t)^{-\frac{
2q+[\beta]_B}{2}} \bar{\Gamma}_M (t,x;T,\xi), \label{eq:bound_gamm_tild_forward}
\end{align}
for any $0< t<T<T_0$ and $x,\xi\in\R^d$, where $C$ is a positive constant that only depends on
$B,M,T_0$ and $N$ and where
\begin{equation}
 \bar{\Gamma}_M (t,x;T,\xi) = \frac{1}{\sqrt{(2\pi)^d \det\Cv(T-t)}}  \exp\left(-\frac{1}{2}\big\langle\Cv^{-1}(T-t) \big(y - e^{(T-t)B}x\big),
    \big(y -e^{(T-t)B}x\big)\big\rangle\right),
\end{equation}
with
\begin{align}\label{eq:covariance_mean}
 \Cv(s)= \int_0^s e^{r B}\begin{pmatrix}
    M I_{p_0} & 0_{p_0\times (d-p_0)} \\
    0_{(d-p_0)\times p_0} & 0_{(d-p_0)\times (d-p_0)}
  \end{pmatrix}e^{r B^*} d r.
\end{align}
\end{itemize}
\end{theorem}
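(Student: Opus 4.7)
The plan is to obtain all three parts from the Levi parametrix construction carried out for degenerate Kolmogorov operators in \cite{MR1386366}, \cite{DiFrancescoPascucci2} and \cite{Francesco}. Under conditions $(\tilde{\text{i}})$--$(\tilde{\text{ii}})$ we are in the classical global setting for $\tilde{\Lc}$: the coefficients are bounded, uniformly $B$-H\"older of order $N+\alpha$, and the second-order part is uniformly coercive in the first $p_0$ directions, so the frozen fundamental solution makes sense at every point of $]0,T_0[\times\R^d$. I would first write down the parametrix $Z(t,x;T,\xi)$ as the fundamental solution of the operator obtained by freezing the $\ta_{ij},\ta_i$ at $(T,\xi)$ and then solve the associated Volterra integral equation for the remainder $\Phi$, setting $\tG = Z + Z\star\Phi$. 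The uniform coercivity and uniform boundedness of the (Lie) derivatives of the coefficients ensure the Volterra series converges absolutely and the resulting $\tG$ is a continuous non-negative function on the cone $\{0<t<T<T_0\}\times\R^{2d}$.

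For Part~a), the key point is that $Z(\cdot,\cdot;T,\xi)$ solves $\tilde\Lc u=0$ for the frozen operator, so plugging it into $\tilde\Lc$ produces precisely the source term that cancels in the Volterra equation. Differentiating $\tG$ in $(t,x)$ up to the orders encoded by $2p+[\alpha]_B\le N+2$ is then justified by the intrinsic Schauder estimates for $\tilde\Lc$ (Theorem~\ref{th:schauder_est}), applied fiber by fiber in the forward variables: these give $\tG(\cdot,\cdot;T,\xi)\in C^{N+2,\alpha}_B(]0,T[\times\R^d)$ with bounds that match the Gaussian tail of the parametrix. The distributional terminal condition $\tG(t,x;T,\cdot)\to\delta_\xi$ as $t\to T^-$ reduces, by the Volterra series, to the analogous property of $Z$, which in turn follows from the fact that the frozen Gaussian concentrates at $e^{(T-t)B}\xi\to\xi$.

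For Part~b), I would exploit the duality between $\tilde\Lc$ and its formal adjoint $\tilde\Lc^*$. When $N\ge 2$, the coefficients $\ta_{ij},\ta_i$ are twice differentiable along the Lie directions, so $\tilde\Lc^*$ is itself an operator of the same Kolmogorov class (with drift reversed by the substitution $B\mapsto -B^*$ and with explicit first-order terms generated by differentiating $\ta_{ij}$ and $\ta_i$); in particular, the Assumptions \ref{ass:hypo} and the analogues of $(\tilde{\text{i}})$--$(\tilde{\text{ii}})$ are preserved, merely with $N$ replaced by $N-2$. A standard Green-identity argument (integration by parts on a truncation cylinder combined with the decay of $\tG$ at infinity coming from the Gaussian bound in Part~c)) shows that $\tG(t,x;T,\xi)$ coincides with the fundamental solution of $\tilde\Lc^*$ with the roles of $(t,x)$ and $(T,\xi)$ swapped. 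Applying Part~a) to this adjoint operator then yields $\tG(t,x;\cdot,\cdot)\in C^{N,\alpha}_B(]t,T_0[\times\R^d)$ and that it solves $\tilde\Lc^* u=0$ in the forward variables.

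Part~c) is where the real work lies and where I expect the main obstacle. The pointwise bound by $\bar\Gamma_M$ is obtained by comparing $\tG$ with the explicit Gaussian fundamental solution of the constant-coefficient Kolmogorov operator $\tfrac{M}{2}\sum_{i=1}^{p_0}\partial_{x_i}^2+Y$, whose covariance matrix is the $\Cv(s)$ in \eqref{eq:covariance_mean}. For the undifferentiated kernel one propagates the Gaussian envelope through each term of the Volterra series, using the semigroup property of $\bar\Gamma_M$ and Young-type estimates on convolutions on the Kolmogorov group; the coercivity constant $M$ enters the exponent as the "worst case" for the diffusion matrix, which is precisely why $\bar\Gamma_M$ dominates. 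The derivative bounds follow by combining the explicit differentiation of $Z$ (which produces the singular factor $(T-t)^{-(2p+[\alpha]_B)/2}$ dictated by the intrinsic homogeneity) with the intrinsic Schauder estimates of Theorem~\ref{th:schauder_est}, applied on parabolic cylinders whose size is tuned to the distance $|\xi-e^{(T-t)B}x|_B$. The main delicacy is to keep the constant $C$ independent of $(t,x,T,\xi)$, which requires that all the finitely many Lie derivatives of $\ta_{ij},\ta_i$ entering the computation be globally bounded by $M$ on $]0,T_0[\times\R^d$; this is exactly hypothesis $(\tilde{\text{i}})$ and is what distinguishes our global framework from a purely local parametrix argument.
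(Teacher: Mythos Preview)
Your outline is a faithful sketch of the Levi parametrix construction carried out in \cite{DiFrancescoPascucci2} (and its antecedents \cite{MR1386366}, with the Schauder machinery from \cite{Francesco}), and this is exactly what the paper relies on: the paper's own proof consists of a single citation to Theorem~1.4 in \cite{DiFrancescoPascucci2}. So there is no discrepancy in approach; you have simply unpacked the content of the reference that the authors invoke.

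One small remark on your sketch of Part~b): the parametrix construction in \cite{DiFrancescoPascucci2} actually freezes the coefficients at the \emph{backward} point and builds the fundamental solution of the adjoint operator directly, rather than first building $\tG$ for $\tilde\Lc$ and then passing to the adjoint by a Green-identity argument as you suggest. Both routes work and lead to the same object, but the direct construction avoids having to justify integration by parts against a kernel with a Gaussian singularity, which is a minor technical convenience.
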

\begin{proof}
See Theorem 1.4 in \cite{DiFrancescoPascucci2}.
\end{proof}

In the next result, the sets $S_{\eps}(x_0)\subseteq \Rd$ are as defined in Notation
\ref{not:cylinders}.
\begin{theorem}\label{lem:green}
Let $x_0\in D$ and $\eps \in\, ]0,1[$ such that $\overline{S_{\eps}(x_0)}\subseteq D$. Then, for
any $T\in\, ]0,T_0[$ and $h\in C\big(\partial_{P}{H_{\eps}(0,x_0;T)}\big) $, there exists a unique
solution in $ C^{N+2,\alpha}_B\big(H_{\eps}(0,x_0;T)\big) \,\cap\,
C\big((H_{\eps}\cup\partial_{P}H_{\eps})(0,x_0;T)\big)$ to
 \begin{equation}\label{ae80_ter}
  \begin{cases}
  \Lc f=0,\qquad & \text{on } H_{\eps}(0,x_0;T), \\
    f=h, &  \text{on } \partial_{P} H_{\eps}(0,x_0;T).
  \end{cases}
\end{equation}
Moreover, if $h|_{\partial_{\Sigma}{H_{\eps}(0,x_0;T)}}\equiv 0$, then the following representation holds:
\begin{equation}
  f(t,x) = \int_{S_{\eps}(x_0)} G(t,x;T,\xi)  h(T,\xi)  \dd \xi, \qquad (t,x)\in \left(H_{\eps}\cup\partial_{P}H_{\eps}\right)(0,x_0;T),
\end{equation}
where $G$ denotes the Green function of $
\Lc$ for $H_{\eps}(0,x_0;T_0)$, namely a {continuous}
non-negative function $G(t,x;T,\xi)$ defined for any $0< t<T<T_0$ and $x,\xi\in
\overline{S_{\eps}(x_0)}$ enjoying the following properties:
\begin{itemize}
\item[a)] for any $(T,\xi)\in H_{\eps}(0,x_0;T_{0})$,
the function $G(\cdot,\cdot;T,\xi)\in C^{N+2,\alpha}_B\big(H_{\eps}(0,x_0;T)\big) \,\cap\,
C\big((H_{\eps}\cup\partial_{\Sigma}H_{\eps})(0,x_0;T)\big)$ and solves
 \begin{equation}\label{ae80_quat}
  \begin{cases}
  \Lc f=0\qquad & \text{on }
   H_{\eps}(0,x_0;T)
  , \\
    f= 0 &  \text{on } \partial_{\Sigma} H_{\eps}(0,x_0;T), \\
       f(T,\cdot) = \delta_{\xi} &  \text{on } S_{\eps}(x_0); 
  \end{cases}
\end{equation}
\item[b)] if $N\geq 2$, then $G$ is also the Green function of the
formal adjoint $
\Lc^*$ for $H_{\eps}(0,x_0;T_0)$. In particular, for any
$(t,x)\in H_{\e}(0,x_{0};T_{0})$, the function $G(t,x;\cdot,\cdot)\in
 C^{N,\alpha}_B\big({H_{\eps}(t,x_0;T_0)}\big) \,\cap\, C\big((H_{\eps}\cup\partial_{\Sigma}H_{\eps})(t,x_0;T_{0})\big)$ and solves
 \begin{equation}\label{ae80_quatb}
  \begin{cases}
  \tilde\Lc^* f=0\qquad & \text{on } {H_{\eps}(t,x_0;T_0)}, \\
   f= 0 &  \text{on } \partial_{\Sigma} H_{\eps}(t,x_0;T_0), \\
       f(t,\cdot) = \delta_{x} &  \text{on } S_{\eps}(x_0). 
  \end{cases}
\end{equation}
\end{itemize}

\end{theorem}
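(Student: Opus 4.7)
The plan is to reduce the statement to the globally-coercive setting via the extension from Section \ref{sec:proof_main}, invoke the classical PDE theory for Kolmogorov-type operators on bounded cylinders for existence and uniqueness, construct the Green function via the Levi-parametrix technique, and derive the adjoint property through Green's identity.

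Since $\overline{S_{\eps}(x_0)} \subseteq D$, on $H_{\eps}(0,x_0;T_0)$ the operators $\Lc$ and $\tilde{\Lc}$ coincide by construction of the extended coefficients $\ta_{ij},\ta_{i}$. I may thus replace $\Lc$ by the globally-coercive operator $\tilde{\Lc}$ of \eqref{eq: eq:operator_L_tide}, whose fundamental solution $\tG$ on $\R^d$ is provided by Theorem \ref{lem:sol_fond_tilde}. Existence and uniqueness for \eqref{ae80_ter} in the class $C^{N+2,\alpha}_B(H_{\eps}) \cap C(H_{\eps}\cup\partial_{P} H_{\eps})$ follow from the theory developed in \cite{Francesco} and \cite{Manfredini}: uniqueness via the weak maximum principle for $\tilde{\Lc}$, existence via the Levi-parametrix construction based on $\tG$ combined with a barrier argument at $\partial_\Sigma H_{\eps}$, and interior regularity via the intrinsic Schauder estimates of Theorem \ref{th:schauder_est}. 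The choice of the lens shape $S_{\eps}(x_0) = B_1(x_0-\eps e_1)\cap B_1(x_0+\eps e_1)$ ensures that at every point of $\partial S_{\eps}(x_0)$ the outer normal has non-zero component along $e_1$, so classical exterior-ball barriers in the non-degenerate direction $x_1$ deliver continuity up to $\partial_{P} H_{\eps}$.

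For the Green function, I would follow the standard decomposition: for each $(T,\xi) \in H_{\eps}(0,x_0;T_0)$, set $G(\cdot,\cdot;T,\xi) := \tG(\cdot,\cdot;T,\xi) - w_{\xi}$, where $w_{\xi}$ is the unique solution of \eqref{ae80_ter} with zero terminal datum and lateral datum $\tG(\cdot,\cdot;T,\xi)|_{\partial_\Sigma H_{\eps}}$. Properties (a) then hold by construction. The integral representation
\begin{equation}
f(t,x) = \int_{S_{\eps}(x_0)} G(t,x;T,\xi)\, h(T,\xi)\, \dd \xi
\end{equation}
in the case $h|_{\partial_\Sigma H_{\eps}} \equiv 0$ follows by approximating $h(T,\cdot)$ by smooth functions with compact support inside $S_{\eps}(x_0)$ and passing to the limit using uniqueness in \eqref{ae80_ter}. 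For part (b), when $N\geq 2$ the formal adjoint $\Lc^*$ has coefficients of the same intrinsic regularity as $\Lc$, so applying the same construction to $\Lc^*$ yields a Green function $G^{*}$. The symmetry $G(t,x;T,\xi) = G^{*}(T,\xi;t,x)$ then follows by integrating the identity $G^{*}\tilde{\Lc}G - G\tilde{\Lc}^{*}G^{*} \equiv 0$ over intermediate subcylinders, using the distributional initial/terminal conditions together with the vanishing of both kernels on the lateral boundary; this symmetry transfers the backward intrinsic regularity and the adjoint PDE from $G^{*}$ to $G(t,x;\cdot,\cdot)$.

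The main obstacle is verifying the boundary regularity at $\partial_\Sigma H_{\eps}$ in the degenerate setting. The interplay between the Euclidean shape of $S_{\eps}(x_0)$ and the intrinsic geometry of $\Lc$ requires that $\partial S_{\eps}(x_0)$ be non-characteristic with respect to the non-degenerate part of $\Lc$, so that both the barrier construction and the boundary hypoellipticity apply. The specific lens shape is chosen precisely so that this non-characteristic condition is satisfied uniformly, making the arguments of \cite{Francesco} and \cite{Manfredini} directly applicable without further modification.
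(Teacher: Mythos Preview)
Your proposal is correct and aligns with the paper's own treatment: the paper's proof consists entirely of the citation ``See \cite{Manfredini} and \cite[Section 4]{Francesco}'', and your sketch is a faithful unpacking of the parametrix construction, barrier argument for the lens-shaped domain, and adjoint duality that those references provide. No independent argument is given in the paper beyond deferring to these sources.
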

\begin{proof}
See \cite{Manfredini} and \cite[Section 4]{Francesco}.
\end{proof}

\begin{theorem}\label{th:schauder_est}
Let $Q\subset\Rdd$ be a bounded domain, 
and $u$ be a bounded function in $C^{2+\alpha}_B (Q)$ such that $\tilde\Lc u = 0$ on $Q$. Then, for any domain $Q_0
\Subset Q$, there exists $c>0$ such that
\begin{equation}
\sup_{Q_0}\bigg( \sum_{i=1}^{p_0} |\partial_{x_i} u |  + \sum_{i,j=1}^{p_0} |\partial_{x_i x_j} u | + |Y u| \bigg) + \|{u}\|_{C^{0,\a}_{B}(Q_0)}+\|{u}\|_{C^{1,\a}_{B}(Q_0)}+\|{u}\|_{C^{2,\a}_{B}(Q_0)} \leq c \sup_{Q} u .
\end{equation}
\end{theorem}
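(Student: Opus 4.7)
The proof would proceed by invoking the interior Schauder estimates for Kolmogorov-type operators with intrinsic H\"older coefficients established in \cite{Francesco}. The operator $\tilde\Lc$ has coefficients $\ta_{ij}, \ta_i \in C^{N,\alpha}_B(]0,T_0[\times \R^d)$ with $N\geq 0$, its drift $\langle Bx,\nabla_x\rangle + \partial_t$ satisfies the H\"ormander condition of Assumption \ref{ass:hypo}, and the coercivity condition \eqref{cond:ell-loc_glob} holds uniformly on all of $\R^d$. These are precisely the structural hypotheses under which the Schauder machinery of \cite{Francesco} applies.

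The plan is as follows. First, fix an intermediate bounded domain $Q_1$ with $Q_0 \Subset Q_1 \Subset Q$. Second, apply the interior Schauder estimate from \cite{Francesco} to $u$ on $Q_1$: it yields a constant $c>0$, depending only on $Q_0, Q_1, Q, B, M, \alpha$ and $T_0$, such that
\begin{equation*}
\|u\|_{C^{2,\alpha}_B(Q_0)} + \sup_{Q_0}\bigg(\sum_{i=1}^{p_0}|\partial_{x_i} u| + \sum_{i,j=1}^{p_0}|\partial_{x_i x_j} u| + |Y u|\bigg) \leq c\bigl(\sup_{Q_1}|u| + \|\tilde\Lc u\|_{C^{0,\alpha}_B(Q_1)}\bigr).
\end{equation*}
Since $\tilde\Lc u \equiv 0$ on $Q$ by assumption, the second term on the right-hand side vanishes, and since $\sup_{Q_1}|u|\leq \sup_Q|u|$ the conclusion of Theorem \ref{th:schauder_est} follows at once. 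If the version of the Schauder estimate imported from \cite{Francesco} is stated only with the $C^{2,\alpha}_B(Q_0)$ seminorm on the left, one recovers the sup bound on the Lie derivatives by iterating the estimate on a further nested domain $Q_0 \Subset Q_0' \Subset Q_1$: since each of $\partial_{x_i}u$, $\partial_{x_i x_j} u$, $Yu$ is then $\alpha$-H\"older on $Q_0'$ with seminorm bounded by $c\sup_Q |u|$, and takes at least one pointwise value that is also controlled by $\sup_Q|u|$ (via the fundamental-solution representation used to produce the seminorm bound in the first place), the full sup bound on $Q_0$ follows.

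The deep content imported from \cite{Francesco}, which one may regard as the genuine obstacle although it is not re-derived here, is the Schauder estimate itself. Its proof rests on a parametrix representation of $u$ via the fundamental solution $\tilde\Gamma$ of $\tilde\Lc$ with coefficients frozen at a base point, together with the sharp Gaussian bounds on the intrinsic derivatives of $\tilde\Gamma$ stated in Theorem \ref{lem:sol_fond_tilde}, and with intrinsic H\"older estimates for singular integrals of Kolmogorov type adapted to the dilation group associated with $B$. Since this whole apparatus is already developed in \cite{Francesco}, our proof reduces to its application in the homogeneous case $\tilde\Lc u=0$.
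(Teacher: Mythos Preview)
Your proposal is correct and matches the paper's approach exactly: the paper's proof consists of a single sentence, ``It is a particular case of \cite[Theorem 1.3]{Francesco}.'' Your additional elaboration on nested domains and the structure of the cited Schauder machinery is accurate but unnecessary here, since the result is simply imported as a black box.
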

\begin{proof}
It is a particular case of \cite[Theorem 1.3]{Francesco}.
\end{proof}

\section{Fundamental solutions and Markov processes}\label{sec:app_markov}
We recall some basic notions about Markov processes as given in \cite{FriedmanSDE1} and
\cite{StroockVaradhan}. A \emph{transition distribution} is a kernel $p(t,x;T,\cdot)$ that
satisfies:\vspace{-5pt}
\begin{itemize}
 \item[1)]  ${p}(t,x;T,\cdot)$ is a probability measure on $(\mathbb{R}^d,\mathcal{B}(\mathbb{R}^d))$ for all $ 0\leq t<T< T_0$ and $x\in \R^d$;\vspace{-5pt}
 \item[2)]  ${p}(t,\cdot;T,A)$ is $\mathcal{B}(\mathbb{R}^d)$-measurable for any $ 0\leq t<T< T_0$ and $A\in\mathcal{B}(\mathbb{R}^d)$;\vspace{-5pt}
 \item[3)] if $0\leq t<s<T< T_0$, $x\in\mathbb{R}^d$ and $A\in\mathcal{B}(\mathbb{R}^d)$, the following Chapman-Kolmogorov identity holds:
\begin{equation}\label{CK}
{p}(t,x;T,A)=\int_{\mathbb{R}^d}{p}(s,\xi;T,A){p}(t,x;s,d\xi).
\end{equation}
\end{itemize}
A \emph{Markov process with transition distribution $p$} is a stochastic process $X=(X_{t})_{0\leq
t< T_0}$ defined on the quartet $\left(\O,\F,(\F_{T}^{t})_{0\le t\le T< T_{0}},(P_{t,x})_{0\le t<
T_{0},x\in\R^d}\right)$ such that:\vspace{-5pt}
\begin{itemize}
\item[(a)] $(\O,\F)$ is a measurable space and $(\F_{T}^{t})_{0\le t\le T< T_{0}}$ is a family of
filtrations satisfying $\F_{T}^{t} \subseteq \F_{T'}^{t'}$ for $t'\le t,T\le T'$ and
$\F=\F^0_{T_0}$ (i.e. $\F$ is the smallest $\sigma$-algebra containing all $\F_{T}^{t}$);
\vspace{-5pt}\item[(b)] $(X_T)_{t\leq T<T_0}$ is adapted to $\F^t$ for any $t\in[0,T_0[$;
\vspace{-5pt}\item[(c)] for any $(t,x)\in[0,T_0[\times \Rd$, $P_{t,x}$ is a probability measure on
$(\O,\F^{t}_{T_0})$ satisfying
\begin{align}
P_{t,x}(X_t =x)&= 1,   \\
P_{t,x}(X_T \in A | \F^t_s)&= p(s,X_s;T,A) ,\qquad t\leq s < T< T_0,\quad A\in\mathcal{B}(\mathbb{R}^d).
\end{align}
\end{itemize}
Theorem 2.2.2 in \cite{StroockVaradhan} guarantees that for any transition distribution $p$ there
exists a Markov process $X=(X_{t})_{t\in[0,T_0[}$ having $p$ as transition distribution.

A transition distribution can be defined from a differential operator 
$\tilde\Lc$ of the
form \eqref{eq: eq:operator_L_tide} satisfying conditions
($\tilde{\text{i}}$)-($\tilde{\text{ii}}$) at the beginning of Section \ref{sec:proof_main}.
Indeed, if $\tilde\Gamma$ denotes the fundamental solution of $\tilde\Lc$ in Theorem \ref{lem:sol_fond_tilde}
then
\begin{equation}\label{ee100}
 \tilde p(t,x;T,A):=\int_A \tilde \Gamma(t,x;T,\xi)d\xi,\qquad 0\le t<T<T_0,\ x\in\mathbb{R}^d, \
 A\in\mathcal{B}(\mathbb{R}^d)
 \end{equation}
defines a transition distribution.
 In virtue of the properties of $\Gamma$ in Theorem \ref{lem:sol_fond_tilde}, if $\tilde p$ is as in
\eqref{ee100} then the associated Markov process admits a continuous version and
is an \emph{$\Ac_t$-global diffusion on $\R^d$} in the sense of Definition \ref{def:local_diffusion}. 
This is a consequence of Kolmogorov continuity theorem (see, for instance, Theorems 2.1.6 and
2.2.4 in \cite{StroockVaradhan}) and the estimate given in the following
\begin{lemma}\label{lla}
Let $\tilde X=(\tilde X_{t})_{t\in[0,T_0[}$ be a Markov process with transition distribution $\tilde p$ in
\eqref{ee100}. Then for any $q\ge 1$ there exists a positive constant $C$ such that
\begin{equation}\label{ee101}
  E_{t,x}\big[\big|\tilde X_{T}-\tilde X_{s}\big|^{q}\big]\le C(1+|x|^{q})|T-s|^{\frac{q}{2}},\qquad
  t\le s<T< T_{0},\ x\in\R^{d}.
\end{equation}
\end{lemma}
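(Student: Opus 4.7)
My plan is to bound the $q$-th moment of the increment $\tilde X_T - \tilde X_s$ by conditioning at time $s$ via the Markov property and then invoking the Gaussian upper bound on $\tilde\Gamma$ established in Theorem \ref{lem:sol_fond_tilde}c). The argument will naturally split into two parts: controlling the conditional moment $E_{s,y}[|\tilde X_T - y|^q]$ uniformly in $y$ up to a factor $(1+|y|^q)$, and then averaging with respect to the law of $\tilde X_s$ under $P_{t,x}$.

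For the first step, I would apply the Markov property to write
\begin{equation}
E_{t,x}\big[|\tilde X_T - \tilde X_s|^q\big] = E_{t,x}\big[g(s,\tilde X_s)\big],\qquad g(s,y) := \int_{\R^d} |\eta - y|^q \tilde\Gamma(s,y;T,\eta)\,d\eta.
\end{equation}
Theorem \ref{lem:sol_fond_tilde}c) with $p=0$ and trivial multi-index gives $\tilde\Gamma(s,y;T,\eta) \leq C \bar\Gamma_M(s,y;T,\eta)$, and I would then split
\begin{equation}
|\eta - y|^q \leq 2^{q-1}\big(|\eta - e^{(T-s)B}y|^q + |e^{(T-s)B}y - y|^q\big).
\end{equation}
Integrating the first piece against $\bar\Gamma_M$ produces, after a translation, the $q$-th absolute moment of a centered Gaussian with covariance $\Cv(T-s)$, which is at most $C_q (\mathrm{tr}\,\Cv(T-s))^{q/2}$. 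The second piece is pointwise bounded by $C(T-s)^q |y|^q$, since $|e^{rB}-I| \leq Cr$ for $r\in [0,T_0]$.

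The crucial technical point will be the estimate $\mathrm{tr}\,\Cv(T-s) \leq C(T-s)$ for $T-s \in [0,T_0]$. This follows directly from the explicit formula \eqref{eq:covariance_mean}: every entry of $\Cv(\cdot)$ is a polynomial vanishing at $0$, so on a bounded interval it is dominated by its linear term. Combining the two bounds I obtain
\begin{equation}
g(s,y) \leq C(T-s)^{q/2}\big(1 + |y|^q\big),\qquad 0\leq T-s \leq T_0.
\end{equation}

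To close the argument, I would apply the same Gaussian domination to $\tilde\Gamma(t,x;s,\cdot)$, together with the triangle inequality $|\tilde X_s| \leq |\tilde X_s - e^{(s-t)B}x| + |e^{(s-t)B}x|$, to obtain $E_{t,x}[|\tilde X_s|^q] \leq C(1+|x|^q)$ uniformly in $s-t \leq T_0$; inserting this into $E_{t,x}[g(s,\tilde X_s)]$ yields \eqref{ee101}. The only subtle step is the trace-of-covariance bound above, which reflects the fundamental fact that $\Cv(\cdot)$ has no constant term; everything else reduces to routine Gaussian moment computations and the uniform control of $e^{rB}$ on $[0,T_0]$.
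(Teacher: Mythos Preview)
Your argument is correct and follows the same overall skeleton as the paper's proof: condition at time $s$ via the Markov property, replace $\tilde\Gamma$ by the Gaussian majorant $\bar\Gamma_M$, and split $|\eta-y|^q$ into $|\eta-e^{(T-s)B}y|^q$ and $|(e^{(T-s)B}-I)y|^q$. The only real divergence is in how the first piece is controlled. The paper exploits the intrinsic dilation $D_0(\lambda)$ via the inequality $|z|\le C\lambda\,|D_0(\lambda^{-1})z|$ with $\lambda=(T-s)^{1/2}$, and then invokes an external scaling result (Proposition~3.5 in \cite{DiFrancescoPascucci2}) to absorb $|D_0(\lambda^{-1})z|^q$ back into a Gaussian $\bar\Gamma_{M'}$ with enlarged constant. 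You instead compute the $q$-th moment of the centered Gaussian directly and bound it by $(\mathrm{tr}\,\Cv(T-s))^{q/2}\le C(T-s)^{q/2}$. Your route is more self-contained and avoids the external reference; the paper's route is more in keeping with the intrinsic-geometry viewpoint used throughout. One small correction: the entries of $\Cv(s)$ are not polynomials in general (the $\ast$-blocks in $B$ are arbitrary, so $e^{rB}$ need not be polynomial), but your conclusion $\mathrm{tr}\,\Cv(s)\le Cs$ still holds simply because the integrand in \eqref{eq:covariance_mean} is bounded on $[0,T_0]$.
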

\proof We recall the definition of $D_{0}$ in \eqref{e7aa} and notice that
\begin{equation}\label{ee102}
 \left|z\right|=\left|D_{0}\left(\l\right)D_{0}\left(\l^{-1}\right)z\right|\le
 C \l\left|D_{0}\left(\l^{-1}\right)z\right|,\qquad z\in\R^{d},\ 0<\l<1.
\end{equation}
Now we have
\begin{align}
 E_{t,x}\big[\big|\tilde X_{T}-\tilde X_{s}\big|^{q}\big]&=E_{t,x}\left[E_{s,\tilde X_{s}}\big[\big|\tilde X_{T}-\tilde X_{s}\big|^{q}\big]
 \right]
 =\int_{\R^{d}}\tilde\G(t,x;s,\x)\int_{\R^{d}}\tilde\G(s,\x;T,y)\left|y-\x\right|^{q}dy d\x\\
 &\le C\int_{\R^{d}}\bar{\Gamma}_M(t,x;s,\x)\int_{\R^{d}}\bar{\Gamma}_M(s,\x;T,y)\left(\left|y-e^{(T-s)B}\x\right|^{q}+
 \left|\left(e^{(T-s)B}-I\right)\x\right|^{q}\right)dy d\x
\intertext{(applying estimate \eqref{ee102} with $z=y-e^{(T-s)B}\x$, $\l=(T-s)^{\frac{q}{2}}$ and
by Proposition 3.5 in \cite{DiFrancescoPascucci2}, for some $M'>M$)}
 &\le C (T-s)^{\frac{q}{2}} \int_{\R^{d}}\bar{\Gamma}_M(t,x;s,\x)\int_{\R^{d}}\bar{\Gamma}_{M'}(s,\x;T,y)dy
 d\x\\
 &+ C(T-s)^{q}\int_{\R^{d}}\bar{\Gamma}_M(t,x;s,\x)\int_{\R^{d}}|\x|^{q}\bar{\Gamma}_{M}(s,\x;T,y)dy d\x
\end{align}
that yields \eqref{ee101}.\endproof
We finally observe that Theorem \ref{lem:sol_fond_tilde} also implies that $\tilde X$ is a \emph{Feller process on $D$} in the sense of Assumption \ref{assum:feller}, and as such it is a strong Markov process (see \cite{FriedmanSDE1}, Corollary 2.6, p. 28).

\begin{footnotesize}
\bibliographystyle{acm}
\bibliography{Bibtex-Final}
\end{footnotesize}

\end{document}